\documentclass{amsart}[11pt]

\usepackage{amssymb,mathrsfs,amscd,graphicx,array, multirow,longtable,
enumerate, amsfonts, euscript}

%

%
 \usepackage{hyperref}
%
%
%


\makeatletter
\def\@tocline#1#2#3#4#5#6#7{\relax
  \ifnum #1>\c@tocdepth 
  \else
    \par \addpenalty\@secpenalty\addvspace{#2}%
    \begingroup \hyphenpenalty\@M
    \@ifempty{#4}{%
      \@tempdima\csname r@tocindent\number#1\endcsname\relax
    }{%
      \@tempdima#4\relax
    }%
    \parindent\z@ \leftskip#3\relax \advance\leftskip\@tempdima\relax
    \rightskip\@pnumwidth plus4em \parfillskip-\@pnumwidth
    #5\leavevmode\hskip-\@tempdima
      \ifcase #1
       \or\or \hskip 1em \or \hskip 2em \else \hskip 3em \fi%
      #6\nobreak\relax
    \dotfill\hbox to\@pnumwidth{\@tocpagenum{#7}}\par
    \nobreak
    \endgroup
  \fi}
\makeatother

\usepackage{letltxmacro}
\LetLtxMacro{\oldsqrt}{\sqrt}
\renewcommand{\sqrt}[2][]{\,\oldsqrt[#1]{#2}\,}

%
%


%
%

\newcommand{\abs}[1]{\lvert #1 \rvert}
\newcommand{\zmod}[1]{\mathbb{Z}/ #1 \mathbb{Z}}


\DeclareMathSymbol{\twoheadrightarrow} {\mathrel}{AMSa}{"10}

\DeclareMathOperator{\Pic}{Pic}

\DeclareMathOperator{\Aut}{Aut}
\DeclareMathOperator{\End}{End}
\DeclareMathOperator{\Hom}{Hom}

\DeclareMathOperator{\Gal}{Gal}
\DeclareMathOperator{\Mat}{Mat}

\DeclareMathOperator{\Tr}{Tr}



\DeclareMathOperator{\GL}{GL}

\def\a{{\mathfrak a}} 
\def\d{{\mathfrak d}} 


\newcommand{\m}{\mathfrak{m}}
\newcommand{\f}{\mathfrak{f}}


\newcommand{\BV}{\mathfrak{B}}



\newcommand{\BZ}{\mathscr{B}}
\newcommand{\CZ}{\mathscr{C}}

\newcommand{\EZ}{\mathscr{E}}

\newcommand{\RZ}{\mathscr{R}}







\newcommand{\cc}{\mathbb{C}}
\newcommand{\ff}{\mathbb{F}}

\newcommand{\nn}{\mathbb{N}}
\newcommand{\oo}{\mathbb{O}}

\newcommand{\qq}{\mathbb{Q}}
\newcommand{\rr}{\mathbb{R}}
\newcommand{\zz}{\mathbb{Z}}



\newcommand{\OO}{\mathcal{O}}
\newcommand{\PP}{\mathcal{P}}
\newcommand{\QQ}{\mathcal{Q}}
\newcommand{\RR}{\mathcal{R}}


\newcommand{\calO}{\mathcal{O}}
\newcommand{\calE}{\mathcal{E}}
\newcommand{\calH}{\mathcal{H}}
\newcommand{\calP}{\mathcal{P}}
\newcommand{\calL}{\mathcal{L}}
\newcommand{\wh}{\widehat}
\DeclareMathOperator{\Cl}{Cl}
\DeclareMathOperator{\Isog}{Isog}
\DeclareMathOperator{\Mass}{Mass}
\DeclareMathOperator{\Nr}{Nr}
\DeclareMathOperator{\Ell}{Ell}
\newcommand{\wt}{\widetilde}

\newcommand{\dieu}{Dieudonn\'{e} }
\newcommand{\grp}{\mathfrak{p}}
\newcommand{\grn}{\mathfrak{n}}
\newcommand{\grm}{\mathfrak{m}}
\newcommand{\grN}{\mathfrak{N}}
\newcommand{\grf}{\mathfrak{f}}



\def\makeop#1{\expandafter\def\csname#1\endcsname
  {\mathop{\rm #1}\nolimits}\ignorespaces}
\makeop{Hom}   \makeop{End}   \makeop{Aut}   \makeop{Isom}  \makeop{Pic} 
\makeop{Gal}   \makeop{ord}   \makeop{Char}  \makeop{Div}   \makeop{Lie} 
\makeop{PGL}   \makeop{Corr}  \makeop{PSL}   \makeop{sgn}   \makeop{Spf}
\makeop{Spec}  \makeop{Tr}    \makeop{Nr}    \makeop{Fr}    \makeop{disc}
\makeop{Proj}  \makeop{supp}  \makeop{ker}   \makeop{im}    \makeop{dom}
\makeop{coker} \makeop{Stab}  \makeop{SO}    \makeop{SL}    \makeop{SL}
\makeop{Cl}    \makeop{cond}  \makeop{Br}    \makeop{inv}   \makeop{rank}
\makeop{id}    \makeop{Fil}   \makeop{Frac}  \makeop{GL}    \makeop{SU}
\makeop{Nrd}   \makeop{Sp}    \makeop{Tr}    \makeop{Trd}   \makeop{diag}
\makeop{Res}   \makeop{ind}   \makeop{depth} \makeop{Tr}    \makeop{st}
\makeop{Ad}    \makeop{Int}   \makeop{tr}    \makeop{Sym}   \makeop{can}
\makeop{length}\makeop{SO}    \makeop{torsion} \makeop{GSp} \makeop{Ker}
\makeop{Adm}   \makeop{Mat}
\def\makebb#1{\expandafter\def
  \csname bb#1\endcsname{{\mathbb{#1}}}\ignorespaces}
\def\makebf#1{\expandafter\def\csname bf#1\endcsname{{\bf
      #1}}\ignorespaces} 
\def\makegr#1{\expandafter\def
  \csname gr#1\endcsname{{\mathfrak{#1}}}\ignorespaces}
\def\makescr#1{\expandafter\def
  \csname scr#1\endcsname{{\EuScript{#1}}}\ignorespaces}
\def\makecal#1{\expandafter\def\csname cal#1\endcsname{{\mathcal
      #1}}\ignorespaces} 

\def\doLetters#1{#1A #1B #1C #1D #1E #1F #1G #1H #1I #1J #1K #1L #1M
                 #1N #1O #1P #1Q #1R #1S #1T #1U #1V #1W #1X #1Y #1Z}
\def\doletters#1{#1a #1b #1c #1d #1e #1f #1g #1h #1i #1j #1k #1l #1m
                 #1n #1o #1p #1q #1r #1s #1t #1u #1v #1w #1x #1y #1z}
\doLetters\makebb   \doLetters\makecal  \doLetters\makebf
\doLetters\makescr 
\doletters\makebf   \doLetters\makegr   \doletters\makegr

\def\wt{\widetilde}

\def\wh{\widehat}

\newcounter{thmcounter} 
\numberwithin{thmcounter}{section}
\newtheorem{thm}[thmcounter]{Theorem}
\newtheorem{lem}[thmcounter]{Lemma}
\newtheorem{cor}[thmcounter]{Corollary}
\newtheorem{prop}[thmcounter]{Proposition}
\theoremstyle{definition}
\newtheorem{defn}[thmcounter]{Definition}

\newtheorem{rem}[thmcounter]{Remark}

\newtheorem{sect}[thmcounter]{}


\numberwithin{equation}{section}

\newtheoremstyle{notitle}  
  {}
  {}
  {\itshape}
  {}
  {}
  {\ }
  {.5em}
  {}
\theoremstyle{notitle}

\title[Class number formula]{Supersingular 
  abelian surfaces and Eichler class number formula}
\author{Jiangwei Xue,Tse-Chung Yang and Chia-Fu Yu}

\address{(Xue) Collaborative Innovation Centre of Mathematics, 
School of Mathematics and Statistics, Wuhan University, Luojiashan,
Wuhan, Hubei, 430072, P.R. China.}
\email{xue\_j@whu.edu.cn}

\address{(Yang) Institute of Mathematics, Academia Sinica,
  Astronomy-Mathematics Building, 6F, No. 1, Sec. 4, Roosevelt Road,
  Taipei 10617, TAIWAN.} 
\email{tsechung@math.sinica.edu.tw}

\address{(Yu) Institute of Mathematics,
  Academia Sinica and NCTS (Taipei Office), Astronomy-Mathematics
  Building, No. 1, Sec. 4, Roosevelt Road, Taipei 10617, TAIWAN.}
\email{chiafu@math.sinica.edu.tw} \address{
  The Max-Planck-Institut f\"ur Mathematik \\
  Vivatsgasse 7, Bonn \\
  Germany 53111} \email{chiafu@mpim-bonn.mpg.de}


\begin{document}

\date{\today} \subjclass[2010]{11R52, 11G10} \keywords{supersingular
  abelian 
surfaces, class number formula, Brandt matrices, trace formula.}

\begin{abstract}
  Let $F$ be a totally real field with ring of integers $O_F$, and $D$
  be a totally definite quaternion algebra over $F$.  A well-known
  formula established by Eichler and then extended by K\"orner
  computes the class number of any $O_F$-order in $D$. In this paper
  we generalize the Eichler class number formula so that it works for
  arbitrary $\zz$-orders in $D$. Our motivation is to count the
  isomorphism classes of supersingular abelian surfaces in a simple
  isogeny class over a finite prime field $\ff_p$. We give explicit
  formulas for the number of these isomorphism classes
  for all primes $p$. 
\end{abstract}

\maketitle


\tableofcontents

\section{Introduction}
\label{sec:intro}

Throughout this paper $p$ denotes a prime number. Let $\mathbf{D}$ be
the quaternion ${\mathbb Q}$-algebra ramified exactly at $\{p,
\infty\}$. For any supersingular elliptic curve $X$ over
$\bar{\ff}_p$, its endomorphism algebra
$\End_{\bar{\ff}_p}^0(X):=\End_{\bar{\ff}_p}(X)\otimes_\zz \qq$ is
isomorphic to $\mathbf{D}$, and the endomorphism ring
$\End_{\bar{\ff}_p}(X)$ is always a maximal order in
$\mathbf{D}$. The classical theory of Deuring establishes a one-to-one
correspondence between isomorphism classes of supersingular elliptic
curves over $\bar{\ff}_p$ and ideal classes of a maximal order
$\OO_{\mathbf{D}}\subset \mathbf{D}$.
Moreover, there is  an explicit formula for the class number
$h(\OO_{\mathbf{D}})$ as follows
\begin{equation}
  \label{eq:47}
   h(\OO_{\mathbf{D}})=\frac{p-1}{12}+\frac{1}{3}\left
  (1-\left(\frac{-3}{p}\right )\right )+\frac{1}{4}\left
  (1-\left(\frac{-4}{p}\right )\right ),
\end{equation}
where $\left( \frac{\cdot}{p}\right ) $ denotes the Legendre
symbol. In (\ref{eq:47}), the main term $(p-1)/12$ is the mass for
supersingular elliptic curves, which is also equal to
$\zeta_{\qq}(-1)(1-p)$, where $\zeta_{\qq}(s)$ is the Riemann zeta
function. The remaining terms are the adjustments for the isomorphism
classes with extra automorphisms. As the points corresponding to these
classes on the moduli space come from the reduction of elliptic fixed
points (whose $j$-invariants are $0$ or $1728$), the latter sum is
also called the elliptic part.

The goal of this paper is to provide an explicit description
and concrete formula for the isomorphism classes 
inside certain isogeny class of
supersingular abelian surfaces.  The main tools are the Honda-Tate theory
and extended methods in Eichler's class number formula.

Suppose that $q$ is a power of the prime number $p$. An algebraic
integer $\pi\in \bar{\qq}$ is said to be a \textit{$q$-Weil number} if
$\abs{\pi}=\sqrt{q}$ for all embeddings of $\qq(\pi)$ into $\cc$.  The
Honda-Tate theory \cite{honda1968,tate:ht} establishes a bijection
between isogeny classes of simple abelian varieties over $\ff_q$ and
conjugacy classes $q$-Weil numbers.  In \cite{waterhouse:thesis},
Waterhouse developed a theory for studying the isomorphism classes and
endomorphism rings of abelian varieties within a fixed simple isogeny
class. If $\pi$ is a $q$-Weil number, we denote by $X_\pi$ the abelian
variety over $\ff_q$ associated to $\pi$, unique up to isogeny. For
example, it is well known that every supersingular elliptic curve over
$\bar{\ff}_p$ admits a model over $\ff_{p^2}$ which lies inside the
isogeny class $\Isog(X_\pi)$ corresponding to the $p^2$-Weil number
$\pi=-p$. Then (\ref{eq:47}) may be interpreted as a formula for the
number of isomorphism classes in this isogeny class.  When $q=p$ is a
prime number, Waterhouse has proven the following result \cite[Theorem
6.1]{waterhouse:thesis}.
 
\begin{thm}\label{1.1}
  Suppose that $F=\qq(\pi)$ is not a totally real field. Then
\begin{enumerate}
  \item[(1)] The endomorphism algebra
    $\End^0_{\ff_p}(X_\pi)=\End_{\ff_p}(X_\pi)\otimes_\zz \qq$ of
    $X_\pi$ is commutative and coincides with $F$;
  \item[(2)] All orders in $F$ containing $R_0=\zz[\pi,p \pi^{-1}]$ are
    endomorphism rings;
  \item[(3)] There is a bijection between the set of $R_0$-ideal classes
    and the $\ff_p$-isomorphism classes of abelian varieties isogenous
    to $X_\pi$. 
\end{enumerate}
\end{thm}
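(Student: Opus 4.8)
The plan is to prove (1) from Honda--Tate together with the special arithmetic of the case $q=p$, and (2)--(3) from the classification of abelian varieties in a fixed isogeny class by lattices, using the $\ell$-adic Tate modules for $\ell\neq p$ together with the covariant Dieudonn\'e module at $p$.

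For (1), I would first note that complex conjugation on $\cc$ restricts to an automorphism $c$ of $F=\qq(\pi)$ sending $\pi$ to $p/\pi$: for every embedding $\sigma\colon F\hookrightarrow\cc$ the Weil property gives $\sigma(\pi)\overline{\sigma(\pi)}=p$, so $\overline{\sigma(x)}=\sigma(c(x))$ for all $x$, i.e.\ $c$ is complex conjugation in each embedding. If $c$ were trivial then $\pi=p/\pi$, forcing $\pi=\pm\sqrt p$ and $F=\qq(\sqrt p)$ totally real, contrary to hypothesis; hence $c\neq\id$, no embedding of $F$ is real, and $F$ is a CM field, in particular totally imaginary. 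Now $\End^0_{\ff_p}(X_\pi)$ is a central division algebra over $F$ whose Brauer invariants are given by Honda--Tate: $1/2$ at each real place, $\tfrac{v(\pi)}{v(p)}[F_v:\qq_p]\pmod{\zz}$ at each $v\mid p$, and $0$ elsewhere. Since $F$ has no real place, only the places above $p$ can contribute; but for $q=p$ one has $v(p)=e(v/p)$ and $[F_v:\qq_p]=e(v/p)f(v/p)$, so the invariant at $v$ equals $v(\pi)\,f(v/p)\in\zz$, hence vanishes modulo $\zz$. Thus all local invariants are $0$, the algebra splits as $F$ itself, and $\End^0_{\ff_p}(X_\pi)=F$ is commutative. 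This is precisely where the assumption $q=p$ (rather than a higher power of $p$) enters.

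For (2)--(3), since the endomorphism algebra is the field $F$, every $Y$ isogenous to $X_\pi$ satisfies $\End^0(Y)=F$, and Frobenius $\pi$ and Verschiebung $p\pi^{-1}$ are genuine endomorphisms, so $\End(Y)$ is an order in $F$ containing $R_0=\zz[\pi,p\pi^{-1}]$. I would attach to each $Y$ a system of local lattices: for $\ell\neq p$ the rational Tate module $V_\ell Y$ is free of rank one over $F\otimes\qq_\ell$ (dimension count $\dim_{\qq_\ell}V_\ell Y=2\dim X_\pi=[F:\qq]$ together with faithfulness of the $F$-action), and $T_\ell Y$ is an $R_0\otimes\zz_\ell$-lattice inside it; at $p$ the covariant Dieudonn\'e module plays the analogous role, yielding an $R_0\otimes\zz_p$-lattice stable under $\pi$ and $p\pi^{-1}$. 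Tate's isogeny theorem $\Hom(X,Y)\otimes\zz_\ell\cong\Hom_{\pi}(T_\ell X,T_\ell Y)$ away from $p$, with its Dieudonn\'e-module counterpart at $p$, shows that $\Hom(X_\pi,Y)$ is recovered from these local lattices; assembling them into a single fractional $R_0$-ideal $I\subset F$ gives a well-defined map from isomorphism classes of $Y$ to $R_0$-ideal classes, injective by Tate's theorem and (the key point) surjective once every prescribed system of local lattices is realized by an actual variety in the isogeny class. Finally, for a fractional $R_0$-ideal $I$ the endomorphism ring of the corresponding variety is the multiplier order $\OO(I)=\{x\in F:xI\subseteq I\}$; taking $I=\OO$ shows every order $\OO\supseteq R_0$ occurs, giving (2), while the bijection itself is (3).

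The main obstacle is the analysis at $p$. Away from $p$ the \'etale Tate-module formalism is entirely standard, but at $p$ one must (i) produce from the covariant Dieudonn\'e module an $R_0\otimes\zz_p$-lattice carrying the correct isogeny-invariant $\Hom$-formula, and (ii) establish local surjectivity, namely that every admissible Dieudonn\'e lattice stable under $\pi$ and $p\pi^{-1}$ genuinely arises from an abelian variety in the isogeny class. This is exactly Waterhouse's study of $p$-divisible groups and \emph{kernel ideals}, and it is the step where the local structure of $R_{0}\otimes\zz_p=\zz_p[\pi,p\pi^{-1}]$ and the factorization of $p$ in $F$ must be used carefully; by the computation in (1) the archimedean and prime-to-$p$ places contribute nothing new.
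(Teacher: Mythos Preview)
The paper does not prove Theorem~\ref{1.1}; it is quoted verbatim from Waterhouse \cite[Theorem~6.1]{waterhouse:thesis} as background, so there is no ``paper's own proof'' to compare against. Your outline is essentially Waterhouse's argument, and part~(1) is complete and correct: the Honda--Tate invariant computation $\tfrac{v(\pi)}{v(p)}[F_v:\qq_p]=v(\pi)f(v/p)\in\zz$ is exactly the place where $q=p$ is used, and the CM-field observation rules out real places.

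For~(2)--(3), your lattice-theoretic framework is the right one and matches both Waterhouse and the paper's own Proposition~\ref{6.1}, but you have correctly identified---and not filled---the genuine gap. Two points deserve emphasis. First, ``assembling the local lattices into a single fractional $R_0$-ideal'' already uses that the rational Dieudonn\'e module is free of rank one over $F\otimes_\qq\qq_p$; this needs the dimension formula $2\dim X_\pi=[F:\qq]$, which follows from~(1) and Honda--Tate but should be stated. Second, and more substantively, surjectivity is not merely ``every admissible Dieudonn\'e lattice arises'': one must show that \emph{every} fractional $R_0$-ideal (not just invertible or locally principal ones) is realized, and that the ideal-class map is well defined on non-invertible ideals. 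Waterhouse handles this via his theory of kernel ideals, proving that over the prime field every $R_0$-ideal is equivalent to a kernel ideal; this is where the special structure of $\zz_p[\pi,p\pi^{-1}]$ enters and is the real content of his Chapter~5--6. Your last paragraph gestures at this but does not carry it out, so as written the proposal is a correct outline with the hard step deferred to the reference.
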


In general there is no explicit description for $R_0$-ideal classes. 
However, the set of $R_0$-ideals is divided into finitely many genera and 
each genus has $h(R)$ ideal classes for 
some order $R$ containing $R_0$, 
where $h(R):=\abs{\Pic(R)}$ denotes the class number of the order
$R$. It is known that the class number $h(R)$ of $R$ is a multiple of 
the class number $h(F)$ of $F$. 
As a consequence of Waterhouse's result (Theorem~\ref{1.1}) the number of 
$\ff_p$-isomorphism classes in  $\Isog(X_\pi)$ 
is a multiple of the class number $h(F)$. 
Determining this multiple, nevertheless, requires 
an explicit description of genera of $R_0$-ideals. 
    
The above is the general picture when $F=\qq(\pi)$ is not totally real
for a $p$-Weil number $\pi$.  The exceptional case where $F$ is
totally real corresponds to the unique conjugacy class of the Weil
number $\pi=\sqrt{p}$, for which $F=\qq(\sqrt{p})$ is a real quadratic
field. It was already known to Tate \cite[Section 1,
Examples]{tate:ht} that $X_\pi$ in this case is a supersingular
abelian surface whose endomorphism algebra $\End_{\ff_p}^0(X_\pi)$ is
isomorphic to the quaternion algebra $D_{\infty_1,\infty_2}$ over $F$
ramified only at the two real places of $F$. Different from the
classical case of supersingular elliptic curves treated by Deuring,
Waterhouse \cite[Theorem 6.2]{waterhouse:thesis} showed that
$\End_{\ff_p}(X_\pi)$ is not always a maximal order in
$D_{\infty_1,\infty_2}$. A description of endomorphism rings of these
abelian surfaces will be given in Section~\ref{sec:isom_class}. Our
main result gives explicit formulas for the number of
$\ff_p$-isomorphism classes of this isogeny class.

\begin{thm}\label{1.2}
  Let $H(p)$ be the number of $\ff_p$-isomorphism classes of abelian
  varieties in the simple isogeny class corresponding to the $p$-Weil
  number $\pi=\sqrt{p}$. Then
(1)  $H(p)=1,2,3$ for $p=2,3, 5$, respectively;\\
(2)  For $p>5$ and $p\equiv 3 \pmod 4$, one has
    \begin{equation}
      \label{eq:1.1}
     H(p)=\frac{1}{2}h(F)\zeta_F(-1) +
     \left(\frac{3}{8}+\frac{5}{8}\left(2-\left(\frac{2}{p}\right)
    \right)\right)h(K_1)+\frac{1}{4}h(K_2)+\frac{1}{3}h(K_3),   
\end{equation}
where $K_j:=F(\sqrt{-j})$ for $j=1,2,3$, and $h(K_j)$ denotes the
class number of $K_j$.\\
(3) For $p>5$ and $p\equiv 1 \pmod 4$, one has
    \begin{equation}
      \label{eq:1.2}
   H(p)=       
      \begin{cases}
        8 \zeta_F(-1)h(F)+ h(K_1)+\frac{4}{3}
        h(K_3) & \text{for $p\equiv 1 \pmod 8$;} \\
      \left(\frac{45+\varpi}{2\varpi}\right) \zeta_F(-1)h(F)+\left
        (\frac{9+\varpi}{4\varpi}\right ) h(K_1)
      +\frac{4}{3} h(K_3) & \text{for $p\equiv 5 \pmod 8$;} \\
      \end{cases} 
   \end{equation}
   where $\varpi:=[O_F^\times:A^\times]$ and
   $A=\zz[\sqrt{p}]\subsetneq O_F$. The value of $\varpi$ is either
   $1$ or $3$. \\
The special value $\zeta_F(-1)$ of the Dedekind zeta-function
$\zeta_F(s)$ in both (2) and (3) can be calculated by 
Siegel's formula (\ref{eq:26}). 
\end{thm}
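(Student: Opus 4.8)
The plan is to translate the counting of $H(p)$ into a class-number computation for $\zz$-orders in the totally definite quaternion algebra $D=D_{\infty_1,\infty_2}$ over $F=\qq(\sqrt p)$, and then to evaluate that class number by the generalized Eichler formula established earlier in the paper. First I would combine Honda--Tate theory with Waterhouse's analysis \cite[Thm.~6.2]{waterhouse:thesis} of the isogeny class of $\pi=\sqrt p$ and the description of endomorphism rings in Section~\ref{sec:isom_class}: organizing the right ideal classes of the minimal endomorphism order $\OO_0\subset D$ by their right orders shows that they are exactly the $\ff_p$-isomorphism classes in $\Isog(X_\pi)$, so that $H(p)=h(\OO_0)$. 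Here $\OO_0$ is a $\zz$-order whose center is $A=\zz[\sqrt p]=R_0$; this center is the maximal order $O_F$ precisely when $p\equiv 3\pmod 4$, whereas for $p\equiv 1\pmod 4$ one has $[O_F:A]=2$ with the discrepancy concentrated at $2$, which is exactly why the passage from $O_F$-orders to arbitrary $\zz$-orders is needed in case (3). Since $D$ is ramified only at the two real places it splits at every finite place, so the local structure of $\OO_0$ is that of an order in $M_2(F_v)$ at each $v$.

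Next I would compute the mass $\Mass(\OO_0)$. The mass formula expresses it as $\tfrac12\,h(F)\,\zeta_F(-1)$ times a product of explicit local factors indexed by the finite places, each recording the local type of $\OO_0$; the value $\zeta_F(-1)$ is then made explicit by Siegel's formula \eqref{eq:26}. Because $D$ is everywhere locally split, only the places $p$ (where $F/\qq$ ramifies and $\OO_0$ is non-maximal) and $2$ (where $A\neq O_F$ exactly when $p\equiv 1\pmod 4$) contribute nontrivial factors, and it is these two local factors that distinguish the main terms $\tfrac12 h(F)\zeta_F(-1)$, $8\,h(F)\zeta_F(-1)$, and $\tfrac{45+\varpi}{2\varpi}h(F)\zeta_F(-1)$ across the three cases. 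When the center is the non-maximal order $A$, the normalization of the mass must be taken modulo $A^\times$ rather than $O_F^\times$, and the correction index $\varpi=[O_F^\times:A^\times]\in\{1,3\}$ — the new ingredient supplied by the $\zz$-order version of the formula — enters precisely here, producing the $\varpi$-dependence in case (3).

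I would then evaluate the elliptic (correction) part, which collects the contributions of isomorphism classes whose endomorphism order carries extra automorphisms modulo its center, i.e. finite-order elements of $\OO^\times/A^\times$. Such an element $u$ generates a CM extension $F(u)/F$ inside $D$: the case $u^2=-1$ gives $K_1=F(\sqrt{-1})$ and order-$3$ or order-$6$ elements give $K_3=F(\sqrt{-3})$, exactly as the $h(K_1)$- and $h(K_3)$-terms parallel the classical elliptic terms of \eqref{eq:47}; but over a real quadratic base there also occur projective-torsion elements with $u^2$ a totally positive fundamental unit $\epsilon$, and when $-\epsilon$ is equivalent to $-2$ modulo squares these generate $K_2=F(\sqrt{-2})$. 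This square-class condition on the fundamental unit holds exactly for $p\equiv 3\pmod 4$, which is why $h(K_2)$ appears only in case (2). By Eichler's theory of optimal embeddings each $K_j$ contributes the class number of the relevant order in $K_j$ — reported as $h(K_j)$ — weighted by a product of local embedding numbers; computing these numbers at $2$ and $p$, governed by the splitting type encoded in $\left(\tfrac2p\right)$, produces the explicit rational coefficients and the refinement by $p\bmod 8$.

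Finally I would assemble the mass and elliptic parts, separating the three cases according to the local behavior at $2$ and the value of $\varpi$, and dispose of the small primes $p=2,3,5$ by direct inspection, where $A$ and the fundamental unit are too small for the generic analysis to apply. The hard part will be the local work at $2$ and $p$: pinning down the precise structure of the non-maximal order $\OO_0$ in the sense of Waterhouse, computing its local mass factors, and evaluating the local optimal-embedding numbers of the orders in $K_1,K_2,K_3$ into $\OO_0$ — while simultaneously tracking the projective-unit and $\varpi$-corrections. This is exactly the regime where the classical Eichler/K\"orner formulas do not apply verbatim and where the $\zz$-order generalization, together with a careful two-adic case distinction, does the real work.
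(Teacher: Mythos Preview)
Your reduction step contains a genuine gap. You claim that $H(p)=h(\OO_0)$ for a single ``minimal endomorphism order'' $\OO_0$, but this is not what happens when $p\equiv 1\pmod 4$. The paper's Theorem~\ref{explicit_description} shows that the members of $\Isog(X_\pi)$ fall into \emph{three} distinct types, according to whether the $2$-adic Tate module is isomorphic (as an $A_2$-lattice) to $O_{F_2}^2$, $A_2\oplus O_{F_2}$, or $A_2^2$; these three lattices are pairwise non-isomorphic as $A_2$-modules, so the corresponding endomorphism orders $\oo_1,\oo_8,\oo_{16}$ lie in three \emph{different genera}. By Proposition~\ref{6.1} each type is in bijection with $\Cl(\oo_r)$ for the appropriate $r$, and hence $H(p)=h(\oo_1)+h(\oo_8)+h(\oo_{16})$ --- a sum of three class numbers, not one. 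There is no single order $\OO_0$ whose locally principal ideal classes account for all three types: if $I$ is a locally principal right $\OO_0$-ideal then $\OO_l(I)$ is everywhere locally conjugate to $\OO_0$, so ``organizing by left (or right) orders'' cannot recover orders in other genera. The paper then applies the generalized Eichler formula separately to each $\oo_r$ (the masses are computed in Section~\ref{sec:5.2.1} and the elliptic parts via the tables in Section~\ref{sec:explicit_formula}) and adds the results; the coefficient $8$ in the $p\equiv 1\pmod 8$ case, for instance, is $\tfrac12+\tfrac92+3$.

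There is also a local error: you assert that $\OO_0$ is non-maximal at $p$, but in fact $A_p=O_{F_p}$ because $p$ is ramified in $F$, so the Dieudonn\'e module is automatically an $O_{F_p}$-lattice and the endomorphism order is \emph{maximal} at $p$. The only place where any $\oo_r$ fails to be maximal is $\ell=2$, and only when $p\equiv 1\pmod 4$; all the delicate local work (Bass-order arguments, Lemma~\ref{lem:local-optim-embedd-bass-order}, and the computations in Section~\ref{subsec:cal-local-opt-emb}) happens there. Your broad picture of the elliptic terms coming from CM orders in $K_1,K_2,K_3$ is correct in spirit, but the actual bookkeeping requires listing, for each $\oo_r$ separately, the quadratic proper $Z(\oo_r)$-orders $B$ with $w(B)>1$ and computing $m_{2,r}(B)$ for each --- which is how the $\varpi$- and $\left(\tfrac2p\right)$-dependence actually enters.
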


To obtain Theorem~\ref{1.2}, it is necessary to compute the class
number of $D_{\infty_1, \infty_2}$.
\begin{thm}\label{thm:class-num-for-quat-alg}
  Let $D=D_{\infty_1,\infty_2}$ be the quaternion algebra over
  $F=\qq(\sqrt{p})$ ramified only at the two real places of $F$. The
  class number $h(D)$ (i.e. the class number of any maximal order
  in $D$) is given below:
  \begin{enumerate}
  \item $h(D)=1, 2, 1$ for $p=2, 3, 5$, respectively; 
  \item if $p\equiv 1 \pmod{4}$ and $p\neq 5$, 
    $h(D)=h(F)\zeta_F(-1)/2 +h(K_1)/4+h(K_3)/3$;
  \item if $p\equiv 3 \pmod{4}$ and $p\neq 3$, then $h(D)=H(p)$ and is given by (\ref{eq:1.1}).
  \end{enumerate}
\end{thm}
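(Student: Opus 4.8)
The plan is to apply the generalized Eichler--K\"orner class number formula to a maximal order $\OO$ in $D=D_{\infty_1,\infty_2}$, writing $h(D)=\Mass(\OO)+E$, where $E$ denotes the elliptic (torsion) contribution. Since $D$ is ramified only at the two infinite places it splits at every finite place, so the product over finite ramified primes in the mass formula is empty; together with $[F:\qq]=2$ this collapses the Eichler mass to $\Mass(\OO)=\tfrac12\,h(F)\,\zeta_F(-1)$. This is the main term in both (2) and (3), and for any given $p$ it is evaluated from $\zeta_F(-1)$ by Siegel's formula.

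The elliptic part $E$ is a finite sum indexed by the CM quadratic extensions $K/F$ embeddable in $D$ for which $O_K^\times\neq O_F^\times$; the contribution of $K$ is $\tfrac12\bigl(1-\tfrac1{w_K}\bigr)$, with $w_K:=[O_K^\times:O_F^\times]$, times a product of local optimal-embedding numbers, times the class number of the relevant order in $K$. First I would enumerate these $K$. Roots of unity of order $>2$ generating a quadratic CM extension of $F$ give $\zeta_3,\zeta_6\in K_3=F(\sqrt{-3})$ (so $w_{K_3}=3$, weight $\tfrac13$) and $\zeta_4=\sqrt{-1}\in K_1=F(\sqrt{-1})$ (so $w_{K_1}=2$, weight $\tfrac14$), as long as $p\neq2,3,5$; for $p=2,3,5$ the field $F$ lies in $\qq(\zeta_8),\qq(\zeta_{12}),\qq(\zeta_5)$ and carries extra roots of unity, which is exactly why these primes are exceptional. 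The point responsible for the $K_2$-term is that when $\Nm_{F/\qq}(\varepsilon_F)=+1$ --- equivalently $p\equiv3\pmod4$ --- the fundamental unit $\varepsilon_F$ is totally positive and $2\varepsilon_F$ is a square in $F$ (one exhibits $\eta\in O_F$ with $\Nm(\eta)=\pm2$ and $\eta^2=2\varepsilon_F$), so $F(\sqrt{-\varepsilon_F})=F(\sqrt{-2})=K_2$; then $\sqrt{-\varepsilon_F}$ is a unit of order $2$ modulo $O_F^\times$, $w_{K_2}=2$, and $K_2$ contributes $\tfrac14\,h(K_2)$. When $p\equiv1\pmod4$ one has $\Nm_{F/\qq}(\varepsilon_F)=-1$, $\varepsilon_F$ is not totally positive, no such CM extension occurs, and $K_2$ is correctly absent from (2).

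The heart of the argument --- and the main obstacle --- is the local optimal-embedding computation at the prime $2$, which also inflates the coefficient of $h(K_1)$. For $p\equiv1\pmod4$ the prime $2$ is unramified in $F$, the orders $O_{K_1},O_{K_3}$ are the only ones to consider at $2$, the embedding numbers are all $1$, and the bare weights give the coefficients $\tfrac14$ of $h(K_1)$ and $\tfrac13$ of $h(K_3)$ in (2) (here I must also invoke $p\neq5$ to rule out order-$5$ units). For $p\equiv3\pmod4$ the prime $2$ ramifies, $(2)=\p_2^2$, and the order $O_F[\sqrt{-1}]$ generated by a fourth root of unity can fail to be maximal at $\p_2$; the $K_1$-contribution must then be assembled by summing, with their local optimal-embedding numbers, the class numbers $h(S)$ over the orders $O_F[\sqrt{-1}]\subseteq S\subseteq O_{K_1}$, each $h(S)$ being an explicit multiple of $h(K_1)$ governed by the behaviour of $\p_2$ in $K_1/F$, i.e.\ by $p\bmod8$. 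This $2$-adic bookkeeping is precisely what produces the coefficient $\tfrac38+\tfrac58\bigl(2-\bigl(\tfrac2p\bigr)\bigr)$ of $h(K_1)$ in (\ref{eq:1.1}); combined with the $\tfrac14\,h(K_2)$ and $\tfrac13\,h(K_3)$ terms and the mass, it yields (\ref{eq:1.1}), proving (3).

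For the exceptional primes $p=2,3,5$ I would argue directly: here $\Mass(\OO)$ is tiny while the extra roots of unity make $\OO^\times/O_F^\times$ large, so the finitely many ideal classes can be listed explicitly, giving $h(D)=1,2,1$. It remains to identify $h(D)$ with $H(p)$ in (3): using the description of endomorphism rings from Section~\ref{sec:isom_class} one checks that for $p\equiv3\pmod4$ (where $A=\zz[\sqrt p]=O_F$, so $\varpi=1$) every endomorphism ring in the isogeny class is a maximal order, whence Waterhouse's correspondence makes the $\ff_p$-isomorphism classes biject with the ideal classes of $\OO$ and forces $H(p)=h(D)$; this simultaneously proves the identity $h(D)=H(p)$ and shows that both equal (\ref{eq:1.1}).
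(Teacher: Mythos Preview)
Your overall strategy matches the paper's: apply the class number formula $h(\OO_1)=\Mass(\OO_1)+\Ell(\OO_1)$ with $\Mass(\OO_1)=\tfrac12 h(F)\zeta_F(-1)$, then enumerate the CM extensions and the orders $B$ with $w(B)>1$. Your treatment of the case $p\equiv 1\pmod 4$ is correct and agrees with the paper (Section~\ref{subsec:classno-max-order-general-p}), and your explanation of why $K_2$ appears exactly when $p\equiv 3\pmod 4$ via $\sqrt{-\varepsilon_F}$ is a pleasant argument the paper does not spell out (it defers the enumeration to a companion paper).

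There is, however, a genuine gap in your $p\equiv 3\pmod 4$ analysis. You identify ``the local optimal-embedding computation at the prime $2$'' as the heart of the matter and the source of the inflated $h(K_1)$ coefficient. This is not so: since $\OO_1$ is maximal and $D$ splits at every finite place, one has $m_\ell(B)=1$ for \emph{every} prime $\ell$ and every $O_F$-order $B$ (see Section~\ref{sec:15} with $\scrD=\grN=(1)$, or the first sentence of Section~\ref{subsec:classno-max-order-general-p}). The elliptic part therefore reduces to $\tfrac12\sum_B h(B)(1-w(B)^{-1})$, and the entire $K_1$ coefficient comes from the \emph{unit indices} $w(B)$, not from any $2$-adic embedding count. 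Concretely, for $p\equiv 3\pmod 4$ the very phenomenon you exploit for $K_2$ also operates inside $K_1$: since $2=-i(1+i)^2$ in $K_1$ and $2\varepsilon_F$ is a square in $F$, one finds a unit $\xi\in O_{K_1}^\times$ with $\xi^2=-i\varepsilon_F$, so $\xi$ has order $4$ modulo $O_F^\times$ and $w(O_{K_1})=4$, not $2$. The paper's table records $w(O_{K_1})=w(B_{1,2})=4$ and $w(B_{1,4})=2$, together with $h(B_{1,2})=h(B_{1,4})=(2-(\tfrac{2}{p}))h(K_1)$; summing $\tfrac12(1-w(B)^{-1})h(B)$ over these three orders gives exactly $\bigl(\tfrac38+\tfrac58(2-(\tfrac{2}{p}))\bigr)h(K_1)$. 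With your stated belief that $w_{K_1}=2$, the sum over the same three orders would produce $\bigl(\tfrac14+\tfrac12(2-(\tfrac{2}{p}))\bigr)h(K_1)$, which is wrong. So the missing ingredient is the correct determination of $w(B)$ for the orders in $K_1$, not any local embedding calculation.
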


\begin{rem}\label{1.4}
  It follows from the work of Herglotz \cite{MR1544516}
  that for all $p\geq 5$ and
  $j\in \{1,2,3\}$, we have $h(K_j)=\nu h(F)h(\Bbbk_j)$, where $\nu\in
  \{1,1/2\}$ and $\Bbbk_j:=\qq(\sqrt{-pj})$ 
  (see \cite[Section 2.10]{xue-yang-yu:num_inv}). 
  Hence one may factor
  out $h(F)$ in the results of Theorem~\ref{1.2} and
  \ref{thm:class-num-for-quat-alg}. For example, we get 
  \begin{equation}
    \label{eq:1.4}
    \frac{h(D)}{h(F)}=\frac{\zeta_F(-1)}{2} +
     \frac{h(\Bbbk_1)}{8}+\frac{h(\Bbbk_3)}{6}      
  \end{equation} 
for $p>5$ and $p\equiv 1 \pmod{4}$, and 
  \begin{equation}
    \label{eq:1.5}
     \frac{h(D)}{h(F)}=\frac{\zeta_F(-1)}{2} +
     \left(\frac{3}{8}+\frac{5}{8}\left(2-\left(\frac{2}{p}\right)
    \right)\right)h(\Bbbk_1)+\frac{h(\Bbbk_2)}{4}+\frac{h(\Bbbk_3)}{6}  
  \end{equation} 
for $p>5$ and $p\equiv 3 \pmod{4}$.
M. Peters pointed out that the formulas in the right hand
sides of (\ref{eq:1.4}) and (\ref{eq:1.5}) coincide with formulas for 
the proper class number $H^+(\d_F)$ of even definite quaternary 
quadratic forms of discriminant $\d_F$ 
(see \cite[p.~85 and p.~95]{chan-peters}), 
where $\d_F$ is the discriminant of
$F=\qq(\sqrt{p})$. That is, we have 
\begin{equation}
  \label{eq:1.6}
  h(D)=h(F)\, H^+(\d_F)\quad \text{for all primes $p>5$}.  
\end{equation}
Particularly, the number $h(D)/h(F)$ is always an integer.
The above formula for $H^+(\d_F)$ is obtained by Kitaoka 
\cite{kitaoka:nmj1973} for
primes $p\equiv 1 \pmod 4$ 
and by Ponomarev \cite{ponomarev:aa1976, ponomarev:aa1981} for all
primes $p$. 
Inspired by Peters' comment, we chased the literature and
discovered that formula (2) of Theorem~\ref{thm:class-num-for-quat-alg} 
was obtained in \cite{peters:aa1969}.  
%
\end{rem}

The calculations for both Theorem~\ref{1.2} and
\ref{thm:class-num-for-quat-alg} will be carried out in
Section~\ref{sec:explicit_formula}.  The main idea of the proof of
Theorem~\ref{1.2} is to apply Eichler's class number formula
(\cite{eichler:crelle55}, cf. \cite[Chapter V, Corollary 2.5,
p.~144]{vigneras}) for totally definite quaternion algebras.  Eichler
proved the class number formula for Eichler $O_F$-orders. Based on
Eichler's methods, K\"orner~\cite{korner:1987} worked out a similar
class number formula for any $O_F$-order.  However, the class number
formula established in \cite{korner:1987} is not readily applicable in
our case as the orders arising from the endomorphism rings of
supersingular abelian surfaces studied above do not necessarily
contain the ring of integers $O_F\subset F$. A main part of this
paper (Sections 2--5) is then devoted to proving a similar class
number formula and mass formula for arbitrary $\zz$-orders.  Our
generalized Eichler class number formula is the following.

\begin{thm}[Class number formula]\label{thm:class-number-formula-intro}
  Let $D$ be a totally definite quaternion algebra over a totally real
  number field $F$, and $\OO\subset D$ an arbitrary order in $D$ with
  center $A:=Z(\OO)$. The class number of $\OO$ is given by 
  \begin{equation}
    \label{eq:CNF}
h(\OO)=\Mass(\calO)+\frac{1}{2} \sum_{w(B)>1}(2-\delta(B))
  h(B)(1-w(B)^{-1})\prod_{p} m_p(B),    
  \end{equation}
  where the summation is over
  all the non-isomorphic orders $B$ whose fraction field $K$ is a
   quadratic extension of $F$  embeddable into
  $D$, and
  \[B\cap F=A, \qquad w(B):=[B^\times: A^\times]>1.\] Here
  $\Mass(\OO)$ is given by Definition~\ref{defn:mass} and can be
  computed by the mass formula (\ref{eq:rel_mass_formula}); $m_p(B)$
  is the number of conjugacy classes of local optimal embeddings
  (\ref{eq:48}); and $\delta(B)=1$ if 
  $B$ is closed under the complex conjugation $\iota\in \Gal(K/F)$,
  and $0$ otherwise.
\end{thm}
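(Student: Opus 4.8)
The plan is to deduce \eqref{eq:CNF} from the tautological relation between the class number and the mass, and then to identify the correction term by counting the non-central units of each ideal class through optimal embeddings. Concretely, I would fix representatives $I_1,\dots,I_h$ of the right $\OO$-ideal classes, let $\OO_i$ be the right order of $I_i$, and put $w_i:=[\OO_i^\times:A^\times]$, which is finite because $D$ is totally definite. By Definition~\ref{defn:mass} we have $\Mass(\OO)=\sum_{i=1}^h w_i^{-1}$, whence
\[
h(\OO)=\sum_{i=1}^h 1=\Mass(\OO)+\sum_{i=1}^h\bigl(1-w_i^{-1}\bigr),
\]
so the entire problem reduces to evaluating the last sum.

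Each term $1-w_i^{-1}=(w_i-1)/w_i$ measures the non-central part of $\OO_i^\times/A^\times$. I would first note that any $u\in\OO_i^\times\setminus A^\times$ generates a quadratic field $K=F(u)$ and is a unit of the order $B_u:=\OO_i\cap K$, which satisfies $B_u\cap F=A$; thus $w_i-1$ is partitioned according to the isomorphism class of $B_u$. For a fixed representative $B$ with $w(B)>1$, let $E(B,\OO_i)$ be the set of optimal embeddings $\phi\colon K\hookrightarrow D$ with $\phi(B)=\phi(K)\cap\OO_i$, acted on by $\OO_i^\times/A^\times$ through conjugation. Since $K$ is its own centralizer in the quaternion algebra $D$, the stabilizer of $\phi$ is $\phi(B^\times)A^\times/A^\times\cong B^\times/A^\times$, of order $w(B)$; hence every orbit has length $w_i/w(B)$ and
\[
\lvert E(B,\OO_i)\rvert=\frac{w_i}{w(B)}\,m(B,\OO_i),\qquad m(B,\OO_i):=\lvert E(B,\OO_i)/\OO_i^\times\rvert.
\]

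Next I would count the non-central units attached to $B$ by analysing the map $(\phi,\bar b)\mapsto\overline{\phi(b)}$ on $E(B,\OO_i)\times\bigl(B^\times/A^\times\setminus\{1\}\bigr)$. A unit $u$ in the image has prescribed field $F(u)$, which receives exactly the two $F$-isomorphisms $\phi_0,\phi_0\circ\iota$ from $K$; the optimality requirement $\phi(B)=\OO_i\cap F(u)$ is met by both of them when $\iota(B)=B$ and by exactly one of them otherwise, so every fibre has size $1+\delta(B)$. Consequently the number of non-central units attached to $B$ equals $\lvert E(B,\OO_i)\rvert\,(w(B)-1)/(1+\delta(B))$. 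Dividing by $w_i$, summing over $i$, and using the identity $(1+\delta(B))^{-1}=(2-\delta(B))/2$ valid for $\delta(B)\in\{0,1\}$, the contribution of $B$ to $\sum_i(1-w_i^{-1})$ becomes $\tfrac12\bigl(2-\delta(B)\bigr)\bigl(1-w(B)^{-1}\bigr)\sum_{i}m(B,\OO_i)$.

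The remaining input is the global count $\sum_{i=1}^h m(B,\OO_i)=h(B)\prod_p m_p(B)$, i.e.\ Eichler's theorem on optimal embeddings; summing the previous contribution over the finitely many isomorphism classes $B$ with $w(B)>1$ then yields precisely the elliptic term of \eqref{eq:CNF}. The main obstacle is that this embedding count, the mass formula \eqref{eq:rel_mass_formula}, and even the definition of the local numbers $m_p(B)$ are classically available only for Eichler $O_F$-orders (Eichler, K\"orner), whereas here $A=Z(\OO)$ may be non-maximal and $\OO$ need not contain $O_F$. I therefore expect the substantive work (Sections 2--5) to consist of recasting the ideal classes adelically as $D^\times\backslash\wh{D}^\times/\wh{\OO}^\times$, proving the mass formula, and establishing $\sum_i m(B,\OO_i)=h(B)\prod_p m_p(B)$ for an arbitrary $\zz$-order by a careful local analysis at the primes where $A_p\neq O_{F,p}$; the combinatorial reduction above is uniform in $\OO$ and should carry over without change.
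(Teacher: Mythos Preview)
Your proposal is correct and follows essentially the same approach as the paper. The paper embeds the argument in a more general framework: it defines Brandt matrices $\BV(\grn)$ for an arbitrary locally principal ideal $\grn\subseteq\wt A$, proves a trace formula for $\BV(\grn)$ (Theorems~\ref{thm:trac-brandt-matr-for1} and \ref{thm:trac-brandt-matr-for2}), and then specializes to $\grn=\wt A$, where $\BV(\wt A)$ is the identity and $\Tr\BV(\wt A)=h(\OO)$. But unwinding that specialization gives exactly your direct count of non-central elements of $\OO_i^\times/A^\times$ via optimal embeddings; the sets $\CZ_i$, the bijection (\ref{eq:21}), and the fiber analysis in Sections~3.3.3--3.3.4 are your map $(\phi,\bar b)\mapsto\overline{\phi(b)}$ and its $(1+\delta(B))$-to-$1$ property, and Lemma~\ref{3.2.1} is precisely the global embedding count $\sum_i m(B,\OO_i,\OO_i^\times)=h(B)\prod_p m_p(B)$ you invoke. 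Your route is the more economical one if only the class number is wanted; the paper's detour through general $\grn$ is there because the Brandt-matrix trace formula is of independent interest.

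One small slip: $\OO_i$ should be the \emph{left} order $\OO_l(I_i)$ of the right $\OO$-ideal $I_i$, not the right order (which is $\OO$ for every $i$).
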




In the course of proving the class number formula we realize a subtle
point that the reduced norm of a $\zz$-order may strictly contain its
center. This causes some confusion as there are possibly more than one
choice for defining Brandt matrices and other terms as well at a few
places.  Thus one needs to examine all details in the original proof
in \cite{eichler:crelle55} (also \cite[Chapter V, Corollary 2.5,
p.~144]{vigneras}) until the final formula goes through.  Our
definition of Brandt matrices is justified by representation theory
(Section~\ref{sec:repr_int_brandt_matrix}).  We remark that the
methods of results here are algebraic, therefore all results in
Sections 2-4 make sense and remain valid when $F$ is replaced by an
arbitrary global function field, and $A$ by any $S$-order (whose
normalizer is the $S$-ring of integers), possibly except for
Theorems~\ref{thm:trac-brandt-matr-for1} and
\ref{thm:trac-brandt-matr-for2} and
Corollary~\ref{class_number_formula} in characteristic 2; also see
Remark~\ref{function_field}.
    
The second part (Section 6) of this paper is to
prove the explicit formulas given in Theorems~\ref{1.2} and
\ref{thm:class-num-for-quat-alg}.  
Based on our explicit formulas, we
used Magma to evaluate the numbers $H(p)$ for $p<10000$ and make the
tables for values of related terms for $p<200$.





\section{Preliminaries}
\label{sec:preliminaries}


\begin{sect} \label{sec:notations} \textbf{Notations and definitions.}
  Let $F$ be a number field with ring of integers $O_F$ and
  $A\subseteq O_F$ a $\zz$-order in $F$. Let $D$ be a
  finite-dimensional central simple $F$-algebra, and $\calO$ an
  $A$-order in $D$.  The order $\calO$ is said to be a {\it proper}
  $A$-order if $\calO\cap F=A$.  Similarly, for any finite field
  extension $K/F$, we say an order $B\subseteq O_K$ is a
  \textit{proper} $A$-order if $B\cap F=A$. An order $B$ is called a
  \textit{quadratic proper} $A$-order if $B$ is a proper $A$-order and
  the fraction field $K$ of $B$ is a quadratic extension of $F$. It
  does not necessarily mean that $B$ is an $A$-module generated by $2$
  elements. In fact, we will be interested only in those quadratic
  proper $A$-orders $B$ for which $K$ is a totally imaginary quadratic
  extension of $F$ in the case that $F$ is totally real.

  We will need the adelic language in the subsequent sections. For any
  place $v$ of $F$, denote by $F_v$ the completion of $F$ at $v$ and
  $O_v\subset F_v$ the ring of integers if $v$ is a finite place. Let
  $\wh{\zz}:=\varprojlim \zmod{n} =\prod_{p}\zz_p$ be the pro-finite
  completion of $\zz$.  Given any $\zz$-module $Y$, we
  write \[\wh{Y}:=Y\otimes_\zz \wh{\zz}=\prod_{p} Y_p, \qquad
  \text{where}\quad Y_p:=Y\otimes_\zz \zz_p. \] If $Y$ is also an
  $O_F$-module, then $Y_p$ further factors into $\prod_{v\mid p} Y_v$,
  where $Y_v:=Y\otimes_{O_F} O_v$. We are mostly concerned with the
  case where $Y$ is a finite-dimensional $\qq$-vector space or a
  $\zz$-module of finite rank. For example, $\wh{\OO}=\prod_p \OO_p$,
  $\wh{A}=\prod_p A_p$, and $\wh{\qq}=\wh{\zz}\otimes_\zz \qq$ is the
  ring of finite adeles of $\qq$. We also have that $\wh{F}=F\otimes_\zz
  \wh{\zz}= F\otimes_\qq \wh{\qq}=\prod_{v:\,{\rm finite}}' F_v$ is
  the ring of finite adeles of $F$, and $\wh{D}=D\otimes_\qq \wh{\qq}=
  D\otimes_F \wh{F}$ is the finite adele ring of $D$. Thus, $\wh
  \calO^\times \subset \wh D^\times$ and $\wh A^\times\subset \wh
  F^\times$ are open compact subgroups of the finite idele groups $\wh
  D^\times$ and $\wh F^\times$, respectively.

  A \textit{lattice} $I\subset D$ is a finitely generated $\zz$-module
  that spans $D$ over $\qq$. Its associated left order $\OO_l(I)$ is
  defined to be $\OO_l(I):=\{x\in D\mid xI\subseteq I\}. $ Similarly,
  one defines the associated right order $\OO_r(I)$. The lattice $I$
  is said to be a right $\OO$-ideal if $I\OO\subseteq I$.  A right
  $\OO$-ideal is not necessarily contained in $\OO$, and those that
  lie in $\OO$ are called \textit{integral} right $\OO$-ideals.

  Any right $\OO$-ideal $I$ is uniquely determined by its 
  completion $\wh{I}\subset \wh{D}$, as $I= \wh{I}\,\cap D$. For any $g\in
  \wh{D}^\times$, we set
  \[gI:= g\wh{I}\cap D, \qquad g\OO g^{-1}:=g\wh{\OO} g^{-1}\cap D.\]
  Then $gI$ is again a right $\OO$-ideal and $g\OO g^{-1}$ is an order
  in $D$.

  Given an ideal $\a\subsetneq A$, we write $A_\a$ for the $\a$-adic
  completion $\varprojlim A/\a^n$ of $A$, and $Y_\a:=Y\otimes_A A_\a$
  for any finitely generated $A$-module $Y$.

  If $S$ is a finite set, most of the time we write $\abs{S}$ for the
  cardinality of $S$, though sometimes it is more convenient
  to write it  as $\# S$.
\end{sect}




\begin{sect}\label{sec:locally-princ-latt} 
\textbf{Locally principal ideals.}
  A right $\calO$-ideal $I$ is said to be \textit{locally principal}
  with respect to $A=\OO\cap F$ if $I_{\grm}$ is a principal
  $\calO_\grm$-ideal for all maximal ideals $\grm$ of $A$.  Similarly,
  $I$ is said to be locally principal with respect to $\zz$ if $I_p$
  is a principal $\calO_p$-ideal for all primes $p$.  However, these
  two definitions are equivalent. Clearly one has the decomposition
  $\calO_p=\prod_{\grm|p}\calO_{\grm}$ arising from
  $A_p=\prod_{\grm|p} A_{\grm}$. It follows that the ideal $I_p$ is
  $\calO_p$-principal if and only if $I_{\grm}$ is
  $\calO_{\grm}$-principal for all $\grm|p$.  Thus, there is no
  confusion when $I$ is said to be a locally principal right $\calO$-ideal.

  Any locally principal right $\OO$-ideal $I$ is of the form $g\OO$
  for some $g\in \wh{D}^\times$. We have
  \begin{equation}
    \label{eq:29}
\OO_r(I)=\OO, \qquad \OO_l(I)= g\OO g^{-1}.     
  \end{equation}
Define $I^{-1}:=\OO g^{-1}$. Then $I^{-1}$ is a left $\OO$-ideal
whose associated right order is $\OO_l(I)$, and 
\begin{equation}
  \label{eq:30}
 I^{-1}I=\OO, \qquad I I^{-1}=g\OO g^{-1}= \OO_l(I).   
\end{equation}
Note that $I$ is a locally principal right $\OO_r(I)$-ideal if and
only if it is a locally principal left $\OO_l(I)$-ideal.  Thus if we
say (a lattice) $I$ is locally principal, without any reference to
orders, it is understood that $I$ is locally principal for both
$\OO_l(I)$ and $\OO_r(I)$.

Given two locally principal right $\OO$-ideals $I$ and $J$, we write
$I\simeq J$ if they are isomorphic as right $\OO$-ideals. This happens
if and only if there exist $g\in D^\times$ such that $gI=J$.  Denote
by $\Cl(\calO)$ the set of isomorphism classes of locally principal
right $\calO$-ideals in $D$.  The map $g \mapsto g\calO$ for $g\in
\wh{D}^\times$ induces a natural bijection
\[ D^\times \backslash \wh D^\times /\wh \calO^\times \simeq
\Cl(\calO). \] The class number of $\calO$ will be denoted by
$h=h(\calO):=\abs{\Cl(\calO)}$. 
\end{sect}


\begin{sect}\label{sec:norms-ideals}\textbf{Norms of ideals.}
  We study some properties of the norms of ideals in the present
  setting (the ground ring $A$ is not necessarily integrally closed).
  For any $A$-lattice $I$ in $D$, define the norm of $I$ (over $A$) by
  \[ \Nr_A(I):=\left \{\sum_{i=1}^m a_i \Nr(x_i)\ \text{for some $m\in
      \nn$} \, \Big | \, a_i\in A, \, x_i\in I \, \right \}\subset
  F, \] 
  where $\Nr:D\to F$ denotes the reduced norm map.  The
  formation of reduced norms of lattices commutes with completions. That
  is, for any ideal $\a\subsetneq A$, 
\begin{equation}
  \label{eq:completion}
  \Nr_A(I)_\a=\Nr_{A_\a}(I_\a). 
\end{equation}

The inclusion $\subseteq$ is obvious as $I\subseteq I_\a$. Since
$\Nr_A(I)$ is a finitely generated $A$-module, $\Nr_A(I)_\a=
\Nr_A(I)\otimes A_\a$ is the completion of $\Nr_A(I)$ with respect to
the $\a$-adic topology. In particular, $\Nr_A(I)_\a$ is closed in
$\Nr_{A_\a}(I_\a)$.  Let $\Nr_{\mathrm{Set}}$ be the set theoretic
image under the reduced norm map.  Note that $\Nr$ is continuous with respect
to the $\a$-adic topology, and $I$ is dense in $I_\a$.
 We have \[\Nr_{\mathrm{Set}}(I_\a)= \Nr_{\mathrm{Set}}(\bar{I}) \subseteq
\overline{\Nr_{\mathrm{Set}}(I)}\subseteq \Nr_A(I)_\a,\] where the
overline denotes the closure in the $\a$-adic topology.  Since
$\Nr_{A_\a}(I_\a)$ is spanned by $\Nr_{\mathrm{Set}}(I_\a)$ over
$A_\a$, we obtain the other
inclusion 
needed for the verification of (\ref{eq:completion}).


Let $\wt A_l:=\Nr_A(\calO_l(I))$ and $\wt
A_r:=\Nr_A(\calO_r(I))$. Clearly, $\Nr_A(I)$ is a module over the
ring $\wt A:=\wt A_l \wt A_r$. Here extra caution is needed since that
$\wt A_l$ (or $\wt A_r$) may strictly contain $A$ even if $\calO_l(I)$
(or $\OO_r(I)$) is a proper $A$-order.  An example will be given in
Section~\ref{sec:special_case} by taking $I=\mathbb{O}_8$, where
$\mathbb{O}_8$ is a certain nonmaximal order in the quaternion algebra
$D_{\infty_1, \infty_2}$. We do not know the relation between $\wt
A_l$ and $\wt A_r$ in general. However, $\Nr_A(I)$ is reasonably well
behaved when $I$ is locally principal.

Suppose that $I$ is a locally principal right ideal for a proper
$A$-order $\OO$. By (\ref{eq:29}), $\wt A=\wt A_l=\wt
A_r=\Nr_A(\OO)$. If we write $I=g\OO$ for some $g\in \wh{D}^\times$,
then $\Nr_A(I)= \Nr(g)\Nr_A(\OO)=\Nr(g)\wt A$. Hence $\Nr_A$ sends
locally principal right $\OO$-ideals to invertible $\wt A$-modules.
This property will enable us to define Brandt matrices for arbitrary
proper $A$-orders $\OO$ in Section~\ref{sec:trace_Brandt_mat}.
\end{sect}

\begin{sect}\label{sec:mult-prop}\textbf{Multiplicative properties.}
Let $I$ and $J$ be two $A$-lattices in $D$. We discuss 
when the multiplicative property $\Nr_A(I) \Nr_A(J)= \Nr_A(IJ)$ holds. 
Clearly $\Nr_A(I) \Nr_A(J)\subseteq \Nr_A(IJ)$ as $\Nr_A(I) \Nr_A(J)$
is generated by elements $\Nr(x)\Nr(y)=\Nr(xy)$ 
with $x\in I$, $y\in J$ and $xy\in
IJ$. 
Moreover, the equality can be checked locally:
the equality $\Nr_A(I) \Nr_A(J)= \Nr_A(IJ)$ holds if and only if
its local analogue $\Nr_{A_p}(I_p) \Nr_{A_p}(J_p)= \Nr_{A_p}(I_pJ_p)$
holds for every prime $p$. 
The product $IJ$ of $I$ and $J$ is said to be {\it coherent}
if
$\calO_r(I)=\calO_l(J)$ (cf. \cite[p.~183]{reiner:mo},
\cite[p.~22]{vigneras}).  We give an example which shows that
$\Nr_A(I)\Nr_A(J)\neq \Nr_A(IJ)$ when the product $IJ$ of $I$ and $J$
is not coherent, even though both $I$ and $J$ are locally principal
lattices.

Let $F=\qq$ and $D$ be any quaternion $\qq$-algebra with $D_p=\Mat_2(\qq_p)$. 
Take any two $\zz$-lattices $I$ and $J$ in $D$ with  
\[ I_p=
\begin{pmatrix}
  \zz_p & p\, \zz_p \\
  p^{-1} \zz_p & \zz_p \\
\end{pmatrix} \quad \text{and} \quad J_p=
\begin{pmatrix}
  \zz_p & \zz_p \\
  \zz_p & \zz_p \\
\end{pmatrix}. \] 
Then $\Nr_{\zz_p}(I_p)\Nr_{\zz_p}(J_p)=\zz_p$ but 
  $\Nr_{\zz_p}(I_p J_p)=p^{-1} \zz_p$ as 
\[ I_p J_p=\begin{pmatrix}
  \zz_p & \zz_p \\
  p^{-1}\zz_p & p^{-1} \zz_p \\
\end{pmatrix}. \] 
In this example the local product $I_p J_p$ is not
coherent and thus the global product $IJ$ is not coherent.
\end{sect}

Due to the above example we are content with the multiplicative
properties of the reduced norm for the type of products below.

\begin{lem}
  Suppose that the product $IJ$ of $I$ and $J$ is coherent and at
  least one of $I$ and $J$ is locally principal. Then
  $\Nr_A(IJ)=\Nr_A(I)\Nr_A(J)$.
\end{lem}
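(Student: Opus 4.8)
The plan is to reduce the assertion to a purely local equality and then use coherence to absorb the redundant norm of the order. By the local--global criterion recorded in Section~\ref{sec:mult-prop}, the identity $\Nr_A(IJ)=\Nr_A(I)\Nr_A(J)$ holds if and only if $\Nr_{A_p}(I_p)\Nr_{A_p}(J_p)=\Nr_{A_p}(I_pJ_p)$ for every prime $p$, so I may fix $p$ and argue locally. Localizing the coherence hypothesis $\OO_r(I)=\OO_l(J)$ at $p$ gives an equality of local orders; I write $\OO$ for this common order, so that $\OO_p=\OO_r(I)_p=\OO_l(J)_p$. By the symmetry between $I$ and $J$ (spelled out at the end), I may assume that $I$ is the locally principal factor, and hence $I_p=g\OO_p$ for some $g\in D_p^\times$.

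Next I would simplify the product at $p$. Since $J$ is a left $\OO=\OO_l(J)$-ideal we have $\OO J=J$, and localizing gives $\OO_pJ_p=J_p$; therefore $I_pJ_p=g\OO_pJ_p=gJ_p$. From the very definition of $\Nr_A$ as the $A$-module generated by the set-theoretic reduced norms, together with $\Nr(gx)=\Nr(g)\Nr(x)$, one gets the scaling identity $\Nr_{A_p}(gM)=\Nr(g)\Nr_{A_p}(M)$ for any lattice $M$. Applying it to $M=\OO_p$ and to $M=J_p$ yields $\Nr_{A_p}(I_p)=\Nr(g)\Nr_{A_p}(\OO_p)$ and $\Nr_{A_p}(I_pJ_p)=\Nr(g)\Nr_{A_p}(J_p)$. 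Comparing these two, the whole statement comes down to the single identity $\Nr_{A_p}(\OO_p)\Nr_{A_p}(J_p)=\Nr_{A_p}(J_p)$.

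This last identity is the crux. The inclusion $\supseteq$ is immediate because $\Nr(1)=1$ lies in $\Nr_{A_p}(\OO_p)$. For $\subseteq$, the left-hand side is generated over $A_p$ by products $\Nr(u)\Nr(x)=\Nr(ux)$ with $u\in\OO_p$ and $x\in J_p$; coherence is exactly what guarantees $ux\in\OO_pJ_p=J_p$, whence $\Nr(ux)\in\Nr_{A_p}(J_p)$. Combining everything gives $\Nr_{A_p}(I_p)\Nr_{A_p}(J_p)=\Nr(g)\Nr_{A_p}(\OO_p)\Nr_{A_p}(J_p)=\Nr(g)\Nr_{A_p}(J_p)=\Nr_{A_p}(I_pJ_p)$, which is the desired local equality. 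The main obstacle is precisely this step: coherence is indispensable here, since it forces $\OO_pJ_p=J_p$ and lets the spurious factor $\Nr_{A_p}(\OO_p)$ collapse; the incoherent example in Section~\ref{sec:mult-prop} shows that without it the product $I_pJ_p$ can pick up elements of strictly smaller norm and multiplicativity fails even for locally principal lattices. Finally, the case where $J$ rather than $I$ is locally principal is handled by the mirror-image argument: write $J_p=\OO_p h$, use $I_p\OO_p=I_p$ to get $I_pJ_p=I_p h$, and replace the key identity by $\Nr_{A_p}(I_p)\Nr_{A_p}(\OO_p)=\Nr_{A_p}(I_p)$, proved the same way via $yu\in I_p\OO_p=I_p$ for $y\in I_p$, $u\in\OO_p$.
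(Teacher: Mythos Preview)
Your proof is correct and follows essentially the same route as the paper: reduce to the local statement, write the locally principal factor as $g\OO_p$, use coherence to collapse $\OO_pJ_p=J_p$, and apply the scaling identity for the reduced norm. The only difference is that you spell out explicitly the absorption step $\Nr_{A_p}(\OO_p)\Nr_{A_p}(J_p)=\Nr_{A_p}(J_p)$, which the paper leaves implicit in its chain of equalities.
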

\begin{proof}
  Assume that $I$ is right locally $\calO$-principal, where
  $\calO=\calO_r(I)$.  For any prime $p$, one has \[\Nr_{A_p}(I_p
  J_p)=\Nr_{A_p}(x_p\calO_p J_p)=\Nr_{A_p} (x_p
  J_p)=\Nr(x_p)\Nr_{A_p}(J_p).\] Thus $\Nr_{A_p}(I_p
  J_p)=\Nr_{A_p}(I_p) \Nr_{A_p}(J_p)$ for all primes $p$ and hence
  $\Nr_A(IJ)=\Nr_A(I)\Nr_A(J)$. The case that $J$ is locally principal
can be proved similarly. 
\end{proof}
\begin{prop}[Criterion of units in $\OO$]
  We keep the notations of Section~\ref{sec:notations}, except that
  $F$ is allowed to be either a number field or a nonarchimedean local
  field. An element $u\in \OO$ is a unit if and only if $\Nr(u)\in
  O_F^\times$.
\end{prop}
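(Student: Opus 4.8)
The plan is to treat the two implications separately, deriving the forward one from integrality and the substantive one from a Cayley--Hamilton argument carried out over $\zz$ (or over $\zz_\ell$ in the local case) rather than over the center $A$.

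For the forward implication, suppose $u\in\OO^\times$ and choose $v\in\OO$ with $uv=vu=1$. Since $\OO$ is a $\zz$-order, both $u$ and $v$ are integral over $\zz$, hence $\Nr(u)$ and $\Nr(v)$ are integral elements of $F$ and so lie in $O_F$. Multiplicativity of the reduced norm gives $\Nr(u)\Nr(v)=\Nr(1)=1$, whence $\Nr(u)\in O_F^\times$.

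For the reverse implication the point I would exploit is that, although $\OO$ is only an $A$-order and its reduced norm may strictly contain $A$ (so that neither $\Nr(u)$ nor $\Nr(u)^{n}$ need lie in $A$, where $n:=\deg D$), the order $\OO$ is nonetheless a finite free module over the principal ideal domain $\zz$, of rank $r:=n^2[F:\qq]$. I would apply Cayley--Hamilton to the $\zz$-linear endomorphism $L_u\colon\OO\to\OO$ of left multiplication by $u$, obtaining a monic $f(X)\in\zz[X]$ of degree $r$ with $f(L_u)=0$. Because $x\mapsto L_x$ is a ring homomorphism $\OO\to\End_\zz(\OO)$ that is injective (evaluate at $1\in\OO$), the identity $f(L_u)=0$ forces $f(u)=0$ in $\OO$.

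Everything then hinges on the constant term $f(0)=\pm\det_\zz(L_u)$. Here $\det_\zz(L_u)$ is the norm of $u$ for the regular representation of $D$ over $\qq$, namely $\Nm_{F/\qq}(\Nm_{D/F}(u))=\Nm_{F/\qq}(\Nr(u))^{\,n}$, using $\Nm_{D/F}=\Nr^{\,n}$. If $\Nr(u)\in O_F^\times$ then $\Nm_{F/\qq}(\Nr(u))=\pm1$, so $f(0)=\pm1$ is a unit in $\zz$; writing $f(X)=X^r+\cdots+f_1X+f_0$, the relation $f(u)=0$ yields $uv=1$ with $v:=-f_0^{-1}(u^{r-1}+\cdots+f_1)\in\zz[u]\subseteq\OO$, and since $v$ is a polynomial in $u$ it commutes with $u$, giving a genuine two-sided inverse. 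The local case is identical with $\zz$ replaced by $\zz_\ell$ (where $\ell$ is the residue characteristic of $F$): $\OO$ is $\zz_\ell$-free, $f(0)=\pm\Nm_{F/\qq_\ell}(\Nr(u))^{\,n}$, and the local criterion $x\in O_F^\times\iff\Nm_{F/\qq_\ell}(x)\in\zz_\ell^\times$ makes $f(0)$ a unit exactly when $\Nr(u)\in O_F^\times$. The one delicate point, and the reason the argument must be run over $\zz$ (resp. $\zz_\ell$) rather than over the non-maximal center $A$, is precisely this norm bookkeeping: the constant term has to be recognized as a full field norm down to $\qq$ (resp. $\qq_\ell$), whose being a unit is governed exactly by the condition $\Nr(u)\in O_F^\times$.
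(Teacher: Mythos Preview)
Your proof is correct, and it takes a genuinely different route from the paper's. The paper works inside the commutative subalgebra $S:=O_F[u]\subset D$: since $u$ is integral over $O_F$, $S$ is a finite $O_F$-algebra, and one first checks that $u\in S^\times$ iff $\Nr(u)\in O_F^\times$ (essentially a Cayley--Hamilton step, but over $O_F$ in the commutative subfield $F[u]$). Then, setting $R:=S\cap\OO$, the paper invokes a separate lemma---for any integral extension of commutative rings $R\subseteq S$ one has $R^\times=S^\times\cap R$, proved via the going-up theorem---to conclude $u\in R^\times\subseteq\OO^\times$. Your approach bypasses both the passage to $O_F[u]$ and the going-up lemma by running Cayley--Hamilton directly on $\OO$ as a free $\zz$-module (resp.\ $\zz_\ell$-module), identifying the constant term of the characteristic polynomial with $\pm\Nm_{F/\qq}(\Nr(u))^n$, and reading off the inverse in $\zz[u]\subseteq\OO$. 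Your argument is more self-contained and elementary; the paper's argument, on the other hand, isolates a clean reusable lemma about units in integral extensions and makes the role of the commutative subring $O_F[u]$ explicit.
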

\begin{proof}
  Let $S:=O_F[u]\subset D$ be the $O_F$-algebra generated by $u\in \OO$. Since
  $\OO$ is an order, $u$ is integral over $O_F$, and  $S$ is a
  finite $O_F$-algebra. Clearly, $u\in S^\times $ if and only if
  $\Nr(u)\in O_F^\times$. Let $R=S\cap \OO$, then $u\in R$ and $S$ is
  integral over the ring $R$. The proposition follows directly from
  Lemma~\ref{lem:unit-gp-integral-ext} below. 
\end{proof}

\begin{lem}\label{lem:unit-gp-integral-ext}
  Let $R\subseteq S$ be an inclusion of commutative rings with $S$
  integral over $R$. Then $R^\times =S^\times \cap R$.
\end{lem}
\begin{proof}
  Clearly, $R^\times \subseteq S^\times \cap R$. On the other hand,
  \[R^\times= R-\bigcup \m, \] where the union is  over all the
  maximal ideals $\m\subset R$.  Given an element $u\in S^\times \cap
  R$, to show that $u\in R^\times$, it is enough to show that
  $u\not\in \m$ for any maximal ideal $\m\subset R$.  Since $S$ is
  integral over $R$, by the going-up theorem \cite[Theorem
  5.10]{Atiyah-Mac}, any maximal ideal of $R$ can be obtained by
  intersecting a maximal ideal of $S$ with $R$.
\end{proof}

\section{Traces of Brandt matrices}
\label{sec:trace_Brandt_mat}
\numberwithin{thmcounter}{subsection}

 
In this section we define Brandt matrices for arbitrary orders in a
totally definite quaternion algebra and derive a formula for the trace
of Brandt matrices.  This allows us to obtain the generalized class
number formula as stated in
Theorem~\ref{thm:class-number-formula-intro}.  We follow closely
Eichler's original proof \cite{eichler:crelle55}; also see
Vign\'eras's book \cite{vigneras}.

\subsection{Brandt matrices}
\label{sec:12} 

Throughout the entire Section~\ref{sec:trace_Brandt_mat}, $F$ denotes a totally real
number field, $D$ a totally definite quaternion $F$-algebra,
$A\subseteq O_F$ a $\zz$-order in $F$ and $\calO$ a proper $A$-order
in $D$. Let $h=h(\calO)$ be the class number of $\calO$.

We fix a complete set of representatives $I_1,\dots, I_h$ for the right
ideal classes in $\Cl(\calO)$, and define
\begin{equation}
  \label{eq:34}
 \OO_i:=\OO_l(I_i),\qquad  w_i:=[\calO_i^\times: A^\times]. 
\end{equation}
The number $w_i$ only depends on the ideal class of $I_i$.  Since
$I_i=g_i\calO$ for some $g_i\in \wh D^\times$, we have $\OO_i=g_i\OO
g_i^{-1}$ by (\ref{eq:29}).  In particular, each $\calO_i$ is a proper
$A$-order, and if $\calO$ is closed under the canonical involution of
$D$, then each $\calO_i$ is also closed under the canonical
involution. Let
\begin{equation}\label{eq:33}
  \wt A:=\Nr_A(\OO)=\left \{\sum_{i=1}^m a_i \Nr(x_i)\ \text{for some
    $m\in \nn$}  
    \,\Big | \, x_i\in \calO, a_i\in A\, \right \}\subset F.
\end{equation} 
Then $\wt A$ is an order in 
$F$ with $A\subseteq \wt A\subseteq O_F$. For each
$i=1,\ldots, h$,  $\Nr_A(I_i)=\Nr(g_i)\wt{A}$ is an invertible
$\wt{A}$-module, and $\Nr_A(\OO_i)=\wt{A}$. 

\begin{lem}\label{lem:O-closed-under-can-inv}
  We have $\wt{A}=A$ if and only if $\OO$ is closed under the
  canonical involution $x\mapsto \Tr(x)-x$.
\end{lem}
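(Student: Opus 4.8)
The plan is to reduce the statement to the assertion that $\wt A = A$ is equivalent to $\Tr(\OO) \subseteq A$, and then to prove each implication using the relations between the reduced norm $\Nr$, the reduced trace $\Tr$, and the canonical involution $\bar x = \Tr(x) - x$.

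First I would record two elementary observations. Since $1 \in \OO$ and $\Nr(1) = 1$, every $a \in A$ equals $a\,\Nr(1) \in \wt A$, so the inclusion $A \subseteq \wt A$ holds unconditionally; thus the content of the lemma is the reverse inclusion $\wt A \subseteq A$. Second, because $\OO$ is a proper $A$-order, one has $\OO \cap F = A$, and since $\bar x = \Tr(x) - x$, for $x \in \OO$ the element $\bar x$ lies in $\OO$ if and only if $\Tr(x) \in \OO$, i.e. if and only if $\Tr(x) \in \OO \cap F = A$. Hence $\OO$ is closed under the canonical involution precisely when $\Tr(\OO) \subseteq A$.

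For the forward direction, suppose $\OO$ is closed under the involution. Then for each $x \in \OO$ both $x$ and $\bar x$ lie in $\OO$, so $\Nr(x) = x\bar x \in \OO \cap F = A$. As $\wt A$ is by definition the $A$-module generated by such norms, this gives $\wt A \subseteq A$, and therefore $\wt A = A$.

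The reverse direction is the crux. Assuming $\wt A = A$, every individual norm $\Nr(x)$ with $x \in \OO$ lies in $A$, since it appears as a one-term sum in the definition of $\Nr_A(\OO)$. The key device is the polarization identity $\Nr(x+1) = \Nr(x) + \Tr(x) + 1$, obtained by expanding $(x+1)(\bar x + 1)$ and using $\bar 1 = 1$. Because $x + 1 \in \OO$, both $\Nr(x+1)$ and $\Nr(x)$ lie in $A$, whence $\Tr(x) = \Nr(x+1) - \Nr(x) - 1 \in A$. Thus $\Tr(\OO) \subseteq A$, and by the equivalence established above $\OO$ is closed under the canonical involution. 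The main obstacle is isolating this polarization trick: the reduced norm alone only sees the product $x\bar x$, so one must recover the trace by evaluating $\Nr$ at the nearby element $x + 1$, which converts the hypothesis on norms into the desired control on traces.
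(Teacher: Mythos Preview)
Your proof is correct and follows essentially the same approach as the paper: both directions use exactly the same identities, namely $\Nr(x)=x\bar x\in\OO\cap F=A$ for one implication and the polarization $\Tr(x)=\Nr(x+1)-\Nr(x)-1$ for the other. Your version is slightly more detailed (explicitly noting $A\subseteq\wt A$ and reducing closure under the involution to $\Tr(\OO)\subseteq A$), but the argument is the same.
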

\begin{proof}
Suppose that $\wt{A}=A$, then $\Nr(x)\in A$ for all $x\in
  \OO$. Therefore, 
\[\Tr(x)-x=\Nr(1+x)-\Nr(x)-1-x\in \OO.\]
On the other hand, suppose that $\OO$ is closed under the canonical
involution. Then for any $x\in \OO$, $\Nr(x)=(\Tr(x)-x)x$ lies in
$\OO$, and hence $\Nr(x)\in \OO\cap F=A$. It follows that
$\wt{A}=\Nr_A(\OO)=A$.
\end{proof}

In general, $\wt A$ is not necessarily equal to $A$. This is the
crucial difference in deriving the trace formula for Brandt matrices
over non-Dedekind ground rings. For brevity, we write $\Nr(I)$ for
$\Nr_A(I)$.



\begin{prop}\label{brandt_matrix}
Let $\grn$ be a locally principal integral $\wt A$-ideal. For any two integers  $i$ and $j$ with $1\le i, j\le h=h(\calO)$, 
there are bijections among the following finite sets:
 \begin{itemize}
\item [(a)] The set of locally principal right $\calO$-ideals
  $J\subseteq I_i$ such that $J\simeq I_j$ as right $\calO$-ideals and
  $\Nr(J)=\grn \cdot \Nr(I_i)$\,;
\item [(b)] The set of integral locally principal right $\calO_i$-ideals
  $J'\subseteq \calO_i$ such that $J'\simeq I_j I_i^{-1}$ as right
  $\calO_i$-ideals and $\Nr(J')=\grn$\,;
\item [(c)] The set of right principal $\calO_j$-ideals $J''\subseteq
  I_i I_j^{-1}$ such that $\Nr(J'')=\grn  \Nr(I_i)\cdot \Nr(I_j)^{-1}$\,;
\item [(d)] The set of right $\calO_j^\times$-orbits of elements
  $b\in I_i I_j^{-1}$ such that $\Nr(b\OO_j)=\grn \Nr(I_i)\Nr(I_j)^{-1}$.
\end{itemize}  
\end{prop}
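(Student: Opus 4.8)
The plan is to exhibit explicit maps between the four sets, each given by right multiplication by one of the invertible locally principal ideals $I_i^{-1}$ or $I_j^{-1}$, and then to read off the last bijection (c)$\leftrightarrow$(d) as the tautological correspondence between a principal ideal and the orbit of its generators. The identities I would use throughout are those recorded in (\ref{eq:30}), namely $I_i^{-1}I_i=\calO$ and $I_iI_i^{-1}=\calO_i=\calO_l(I_i)$, together with the facts (established just before the statement) that each $I_i$ is locally principal with $\Nr(I_i)=\Nr(g_i)\wt A$ invertible over $\wt A$ and $\Nr(\calO_i)=\wt A$.

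For (a)$\leftrightarrow$(b) I would send $J\mapsto J':=JI_i^{-1}$, with inverse $J'\mapsto J'I_i$. Since $\calO_r(J)=\calO=\calO_l(I_i^{-1})$ the product is coherent, and $J'=JI_i^{-1}\subseteq I_iI_i^{-1}=\calO_i$ is an integral right $\calO_i$-ideal; conversely $J'I_i\subseteq\calO_iI_i=I_i$. The two maps are mutually inverse by $I_i^{-1}I_i=\calO$ and $I_iI_i^{-1}=\calO_i$. The isomorphism class is preserved because right multiplication by the fixed ideal $I_i^{-1}$ commutes with the left $D^\times$-action: $J=bI_j$ iff $J'=b(I_jI_i^{-1})$. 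Finally, as $J$ and $I_i^{-1}$ are locally principal and the product is coherent, the multiplicativity lemma gives $\Nr(J')=\Nr(J)\Nr(I_i)^{-1}$, so $\Nr(J)=\grn\,\Nr(I_i)$ is equivalent to $\Nr(J')=\grn$.

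For (a)$\leftrightarrow$(c) I would send $J\mapsto J'':=JI_j^{-1}$; composing with the inverse of (a)$\leftrightarrow$(b) then yields (b)$\leftrightarrow$(c). Here $J\subseteq I_i$ gives $J''\subseteq I_iI_j^{-1}$, and $J\simeq I_j$, say $J=bI_j$, forces $J''=bI_jI_j^{-1}=b\calO_j$, a principal right $\calO_j$-ideal; conversely a principal $J''=b\calO_j$ recovers $J=J''I_j=bI_j\simeq I_j$. The same computation gives $\Nr(J'')=\Nr(J)\Nr(I_j)^{-1}=\grn\,\Nr(I_i)\Nr(I_j)^{-1}$. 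The bijection (c)$\leftrightarrow$(d) is then immediate: a principal right $\calO_j$-ideal is exactly a set $b\calO_j$, two generators defining the same ideal iff they differ by a factor in $\calO_j^\times$, and $\Nr(b\calO_j)=\Nr(b)\wt A$ matches the norm condition verbatim because $\Nr(\calO_j)=\wt A$. Finiteness of all four sets I would deduce from description (b): an integral right $\calO_i$-ideal of fixed reduced norm $\grn$ is a sublattice of $\calO_i$ of bounded index, and there are only finitely many such sublattices.

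The step requiring the most care — and the reason the multiplicativity lemma was isolated beforehand — is the bookkeeping of the norm conditions. Because the ground ring $A$ need not be integrally closed, one has in general $\wt A=\Nr_A(\calO)\supsetneq A$, so every norm identity must be read as an equality of invertible $\wt A$-modules rather than of $A$-modules; in particular I must invoke $\Nr(\calO_i)=\wt A$ uniformly in $i$, and $\Nr(I_i^{-1})=\Nr(I_i)^{-1}$, the latter itself obtained from $I_i^{-1}I_i=\calO$ via the lemma. I would also check that $\grn$ being integral and locally principal over $\wt A$ is exactly what makes the three norm conditions in (a), (b), (c) correspond under multiplication by the fixed invertible modules $\Nr(I_i)^{\pm1},\Nr(I_j)^{\pm1}$. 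Once coherence and local principality are verified at each arrow, the multiplicativity lemma renders these correspondences exact, and the remaining containment and isomorphism conditions are formal consequences of the identities in (\ref{eq:30}).
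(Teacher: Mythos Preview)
Your proposal is correct and follows exactly the same maps as the paper: (a)$\to$(b) via $J\mapsto JI_i^{-1}$, (a)$\to$(c) via $J\mapsto JI_j^{-1}$, and (c)$\to$(d) via $J''=b\calO_j$, with the norm bookkeeping handled by coherence plus the multiplicativity lemma. Your write-up simply expands the terse verifications the paper leaves implicit (and your finiteness argument from (b) matches the one the paper gives immediately after the proof).
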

\begin{proof}
  The bijection between (a) and (b) is given by $J\mapsto J':=J
  I_i^{-1}$. It is easy to see that the product $J I_i^{-1}$ is coherent
  and hence $\Nr(J I_i^{-1})=\Nr(J) \Nr(I_i)^{-1}$.
  The bijection between (a) and (c) is given by $J'':=J
  I_j^{-1}$. The bijection between (c) and (d) is given by $J''=b
  \calO_j$. 
\end{proof}

Perhaps it is helpful to indicate why the sets in the proposition
above are finite. This is already known if $A=O_F$ in K\"orner
\cite{korner:1987}.  Consider the set in (b). There are finitely many
ideals $J' O_F\subseteq \calO_iO_F$ with $\Nr(J'O_F)=\grn O_F$. As $c O_F
\subseteq A\subseteq O_F$ for some $c\in \nn_{>0}$, there are also finitely
many ideals $J'\subseteq \calO_i$ with $cJ'O_F\subseteq J'\subseteq J'O_F$
for each $J'O_F$.
     
\begin{defn}
Let $\BV_{ij}(\grn)$ be the cardinality of any of above finite sets. 
The \textit{Brandt matrix associated to $\grn$} 
is defined to be the matrix
\[\BV(\grn):=(\BV_{ij}(\grn))\in \Mat_h(\zz).   \]
\end{defn}
It follows from part (d) of Proposition~\ref{brandt_matrix} that 
\begin{equation}
  \label{eq:49}
\BV_{ii}(\grn)=\# \left(\{ b\in \calO_i | \Nr(b)\wt{A}=\grn
\}/\calO_i^\times\right).   
\end{equation}
In particular, $\BV_{ii}(\grn)\neq 0$ only if $\grn$ is principal and 
generated by a totally positive element.  

\subsection{Optimal embeddings}
\label{sec:opt_emb}
\def\Emb{{\rm Emb}}
Let $K$ be a quadratic CM extension of $F$ which can be embedded into
$D$ over $F$. Let $B$ be an $A$-order in $K$. 
Denote by $\Emb(B,\calO)$ the set of optimal embeddings from $B$
into $\calO$. In other words, 
\[\Emb(B,\OO):=\{\varphi\in \Hom_F(K, D)\mid \varphi(K)\cap \OO=\varphi(B)\}.\]
Equivalently, those are the embeddings of $A$-orders
$\varphi:B\hookrightarrow \calO$ so that $\calO/\varphi(B)$ has no
torsion. One can show that $\Emb(B,\calO)$ is a finite
set. Indeed, let $x\in B$ be a fixed element generating $K$ over $F$,
then each map $\varphi$ is uniquely determined by the image
$\varphi(x)$ in $\calO$. As the elements $\varphi(x)$, when $\varphi$
varies, have a fixed norm, these elements land in the intersection of
the discrete subset $\calO$ and a compact set in $\calO\otimes \rr$,
which is a finite set.  Note that $\Emb(B, \calO)$ is nonempty only
if $B$ is a proper $A$-order.  Moreover, if $\calO$ is closed under
the canonical involution, then $\Emb(B, \calO)$ is nonempty only if
$B$ is closed under the complex conjugation $\iota\in \Gal(K/F)$. 

The group $\calO^\times$ acts on $\Emb(B, \calO)$ from the right by
$\varphi\mapsto g^{-1}\varphi g$ for 
all $\varphi\in \Emb(B, \OO)$ and $g\in \OO^\times$.
We denote
\begin{gather*}
   m(B,\calO):=\abs{\Emb(B,\calO)}, \quad  \quad
m(B,\calO,\calO^\times):= \abs{
 \Emb(B,\calO)/\calO^\times},\\
w(B):=[B^\times: A^\times],\qquad\text{and} \qquad  w(\calO):=[\calO^\times:
A^\times].
\end{gather*}
 Then one has 
\begin{equation}
  \label{eq:4.1}   
  m(B, \calO, \calO^\times)= \frac{m(B,\calO)} {w(\calO)/w(B)}.
\end{equation}
Indeed, let $\varphi\in \Emb(B,\calO)$ be an element. The orbit
$O(\varphi)$ of $\varphi$ under the $\calO^\times$-action 
is isomorphic to $\calO^\times/\varphi(B)^\times$, and
hence $\abs{O(\varphi)}=[\calO^\times:
B^\times]=w(\calO)/w(B)$, which is independent of $\varphi$. 
This gives (\ref{eq:4.1}). As a result, one
obtains 
\begin{equation}
  \label{eq:4.2}
  \frac{m(B, \calO_i,
  \calO^\times_i)}{w(B)}=\frac{m(B,\calO_i)}{w_i}, \quad \forall\,
  i=1,\dots, h. 
\end{equation}
As $\calO_p$ and $\calO_{i, p}$ are isomorphic, one has $m(B_p,
\calO_p, \calO_p^\times)=m(B_p, \calO_{i,p}, \calO_{i,p}^\times)$ for
any $i=1,\dots, h$. For simplicity, we write 
\begin{equation}
  \label{eq:48}
m_p(B):=m(B_p, \calO_p,
\calO_p^\times).
\end{equation}

\begin{lem}\label{3.2.1} 
Let $h(B):=\abs{\Pic(B)}$ be the class number of $B$. We have
\begin{equation}
  \label{eq:4.3}
  \sum_{i=1}^h m(B,\calO_i, \calO_i^\times)=h(B)\prod_{p} m_p(B).
\end{equation}
\end{lem}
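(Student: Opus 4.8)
The plan is to run the classical adelic double-coset count of Eichler and Vign\'eras \cite{vigneras}, checking at each step that nothing degenerates when $A\subsetneq O_F$. First I would fix, once and for all, an $F$-algebra embedding $\varphi\colon K\hookrightarrow D$; it exists by hypothesis, and by the Skolem--Noether theorem any two embeddings of $K$ into $D$ are conjugate under $D^\times$, the same being true locally for $\wh\varphi\colon\wh K\hookrightarrow\wh D$ at each place. The key structural fact I will lean on is that optimality is a purely local condition: since $\calO_i=\bigcap_p(D\cap\calO_{i,p})$, an embedding $\phi\colon B\hookrightarrow\calO_i$ is optimal if and only if $\phi_p$ is optimal into $\calO_{i,p}$ for every $p$. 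This lets me move freely between the global and adelic pictures.

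Next I would introduce the single auxiliary set
\[
Z:=\{x\in\wh D^\times : \wh\varphi \text{ is an optimal embedding of } \wh B \text{ into } x\wh\calO x^{-1}\},
\]
on which $\wh K^\times$ acts on the left by $x\mapsto\wh\varphi(a)x$ and $\wh\calO^\times$ acts on the right by $x\mapsto xu$; both preserve $Z$ because $\wh\varphi(\wh K^\times)$ centralizes $\wh\varphi(\wh K)$, so conjugating an optimal embedding by an element of $\wh K^\times$ keeps it optimal. Sending $x$ to $\wh\phi_x:=x^{-1}\wh\varphi x$ identifies $\wh K^\times\backslash Z/\wh\calO^\times$ with $\Emb(\wh B,\wh\calO)/\wh\calO^\times=\prod_p\bigl(\Emb(B_p,\calO_p)/\calO_p^\times\bigr)$, whose cardinality is $\prod_p m_p(B)$; surjectivity and injectivity here are exactly local Skolem--Noether together with the computation of the centralizer as $\wh\varphi(\wh K^\times)$.

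On the global side, recall $I_i=g_i\calO$ with $g_1,\dots,g_h\in\wh D^\times$ representing $D^\times\backslash\wh D^\times/\wh\calO^\times\cong\Cl(\calO)$, and $\calO_i=g_i\wh\calO g_i^{-1}\cap D$. A global optimal embedding $\phi\colon B\hookrightarrow\calO_i$ is $\phi=c^{-1}\varphi c$ for some $c\in D^\times$, and with $x:=cg_i$ one checks, using the local nature of optimality, that $\phi$ is optimal into $\calO_i$ exactly when $x\in Z$. Since $\wh D^\times=\bigsqcup_i D^\times g_i\wh\calO^\times$, as $i$ and $c\in D^\times$ vary the element $x=cg_i$ sweeps out all of $Z$; tracking the residual ambiguities (left $K^\times$ acts trivially on $\phi$, right $\calO_i^\times$ gives the orbit) shows that $x\mapsto\phi$ descends to a bijection
\[
K^\times\backslash Z/\wh\calO^\times\;\xrightarrow{\ \sim\ }\;\bigsqcup_{i=1}^h \Emb(B,\calO_i)/\calO_i^\times .
\]

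Finally I would compare the two quotients of $Z$ through the natural surjection $K^\times\backslash Z/\wh\calO^\times\twoheadrightarrow\wh K^\times\backslash Z/\wh\calO^\times$. The fibre over the class of $x\in Z$ is canonically $K^\times\backslash\wh K^\times/(\wh K^\times\cap x\wh\calO^\times x^{-1})$, and this is where the main obstacle lies: I must show every fibre has the same size $h(B)$. The optimality of $\wh\varphi$ into $x\wh\calO x^{-1}$ is precisely what makes this work, since it forces $\wh K\cap x\wh\calO x^{-1}=\wh\varphi(\wh B)$ and hence $\wh K^\times\cap x\wh\calO^\times x^{-1}=\wh\varphi(\wh B)^\times=\wh B^\times$, so every fibre is $K^\times\backslash\wh K^\times/\wh B^\times=\Pic(B)$ of cardinality $h(B)$. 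Combining the three identifications gives
\[
\sum_{i=1}^h m(B,\calO_i,\calO_i^\times)=\bigl|K^\times\backslash Z/\wh\calO^\times\bigr|=h(B)\,\bigl|\wh K^\times\backslash Z/\wh\calO^\times\bigr|=h(B)\prod_p m_p(B).
\]
The only point sensitive to $A\subsetneq O_F$ is this last step: I must know that $\Pic(B)$ is faithfully computed by the idele quotient $K^\times\backslash\wh K^\times/\wh B^\times$ even for a non-maximal order $B$. This holds because invertible (equivalently, locally principal) fractional $B$-ideals are exactly those with local generators, so the adelic description of $\Pic(B)$ is unaffected by $B$ being maximal; everything else is formal double-coset bookkeeping independent of whether $A=O_F$.
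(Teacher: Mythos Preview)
Your proof is correct and is precisely the adelic double-coset argument of Vign\'eras \cite[Theorem 5.11, p.~92]{vigneras} to which the paper defers (the paper omits the proof entirely, citing Vign\'eras and \cite{wei-yu:classno}). You have also correctly identified the only place where $A\subsetneq O_F$ might cause trouble---the adelic description of $\Pic(B)$---and verified it goes through; one cosmetic remark is that if $K$ does not embed in $D$ both sides vanish trivially, but this is already covered by the standing hypothesis on $K$ in \S\ref{sec:opt_emb}.
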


The proof is similar to that of \cite[Theorem 5.11, p.~92]{vigneras} 
(also see \cite{wei-yu:classno}, Lemma 3.2 and below), and is omitted. 

\subsection{Traces of Brandt matrices}
\label{sec:trace_brandt_mat}

Suppose that $\grn=\wt{A}\beta\subseteq \wt A$ is generated by a
totally positive element $\beta\in \wt{A}$. Choose a complete set
$S=\{\epsilon_1,\dots, \epsilon_s\}$ of representatives for the finite
group $\wt A^\times_+/(A^\times)^2$, where $\wt A^\times_+$ denotes
the subgroup of totally positive elements in $\wt A^\times$. We define
two sets:
\begin{align*}
  \CZ_i&:=\{b\in \calO_i | \Nr(b)=\varepsilon \beta\  \text{\ for some
  $\varepsilon\in S$} \},\\
\BZ_i&:=\{b \in \calO_i | \Nr(b)\wt{A}=\grn \}/A^\times. 
\end{align*}
Since $\ker(A^\times \xrightarrow{\Nr}
\wt{A}^\times)=\ker(A^\times\xrightarrow{a\mapsto a^2}A^\times)=\{\pm 1\}$, 
 \[\BZ_i\simeq\{b\in \calO_i | \Nr(b)=\varepsilon \beta\  \text{\
  for some 
  $\varepsilon\in S$} \}/\{\pm1\}=\CZ_i/\{\pm 1\},\]
and $\BV_{ii}(\grn)=\abs{\BZ_i}/w_i$ by (\ref{eq:49}). Thus, 
\begin{equation}
  \label{eq:27}
 \BV_{ii}(\grn)=\abs{\CZ_i}/2w_i.
\end{equation}

We define the symbol
\begin{equation}
  \label{eq:50}
 \delta_{\grn}=
\begin{cases}
  1 & \text{if $\grn=\wt{A}a^2$ for some $a\in A$}; \\
  0 & \text{otherwise.}
\end{cases}
\end{equation}
Note that the center of $\calO$ or $\calO_i$ is equal to $A$. It
follows that  
\begin{equation}
  \label{eq:4.4}
  2 \delta_\grn=\abs{\CZ_i \cap A}.
\end{equation}

Let $\calP_{\calO,\grn}$ be the set of characteristic polynomials of
non-central elements $b\in \CZ_i$ for some $i$. This is a finite set
in $\wt A[X]$ as for any $x\in \calO_i$, the reduced trace
$\Tr(x)=\Nr(x+1)-\Nr(x)-1\in \wt A$.  It is convenient to introduce a
slightly larger finite set which is independent of $\calO$ but depends
on $\grn$.  Let $\calP_{D,\grn}\subset \wt A[X]$ be the set consisting
of all irreducible polynomials of the form $X^2-t X+\varepsilon \beta$
for some $\varepsilon\in S$ such that $t^2-
4\varepsilon \beta\not \in F_v^2$
for all the ramified places $v$ of $F$ for $D$, including 
all the archimedean
ones. The set $\calP_{D,\grn}$ is again finite as the elements $t$ are
bounded for all the archimedean norms.  
Clearly $\calP_{\calO,\grn}\subseteq
\calP_{D,\grn}$.
      
For each $P\in \calP_{D,\grn}$, write $K_P:=F[X]/(P)$ and
$B_P:=A[x]\subset K_P$, where $x$ is the image of $X$ in $K_P$, and
thus a root of $P$ in $K_P$.  If $x'$ is the other root of
$P$ then $A[x']$ is isomorphic to $A[x]$ as $A$-orders. However, the
order $A[x']$ could be different from $A[x]$ in $K_P$.  For example,
let $p\equiv 5 \pmod{8}$, $F=\qq(\sqrt{p})$ with fundamental unit
$\epsilon\in O_F^\times$, and $A=\zz[\sqrt{p}]$ with $A^\times \neq
O_F^\times$. Then $A[\epsilon \zeta_6]\neq A[\epsilon \zeta_6^{-1}]$
as $A[\epsilon \zeta_6,\epsilon \zeta_6^{-1}]=O_F[\zeta_6]$ but both
orders are proper $A$-orders.
We would  like to
emphasize that $K_P$ is considered not just as an abstract field, but
rather a field with the distinguished element $x$. 

Local conditions imposed in the definition of $\calP_{D,\grn}$ ensure
the existence of an embedding of $(K_P)_v$ into $D_v$ locally
everywhere.  Then the local-global principle guarantees the existence
of an embedding of $K_P$ into $D$ as $F$-algebras. A priori, one
 needs to impose a further condition on $\calP_{D,\grn}$ so
that every order $B_P$ is a proper $A$-order.  However, omission of
this condition will not cause any trouble since $\Emb(B, \calO_i)$ is
empty if $B$ is not a proper $A$-order.
One  has the following equality for each $1\leq i \leq h$:
\begin{equation}
  \label{eq:4.5}
\coprod_{P\in \calP_{\calO,\grn}} \coprod_{B_P\subseteq B\subset K_P}
\Emb(B,\calO_i)
=\coprod_{P\in \calP_{D,\grn}} 
\coprod_{B_P\subseteq B\subset K_P} \Emb(B,\calO_i),
\end{equation}
as $\Emb(B,\calO_i)$ is nonempty only when $P\in \calP_{\calO,\grn}$.  

\begin{lem}\label{4.3}
  There is a natural bijection
  \begin{equation}
    \label{eq:4.6}
   \CZ_i-A \simeq \coprod_{P\in \calP_{D,\grn}} \coprod_{B_P\subseteq
  B\subset K_P} \Emb(B, \calO_i). 
  \end{equation}
\end{lem}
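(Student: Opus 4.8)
The plan is to write down the bijection explicitly, sending a non-central element $b\in\CZ_i$ to its characteristic polynomial together with the embedding of $K_P$ that it induces, with the distinguished root $x$ mapping to $b$. Concretely, given $b\in\CZ_i-A$, I set $P_b:=X^2-\Tr(b)\,X+\Nr(b)$. Since $b\in\CZ_i$ we have $\Nr(b)=\varepsilon\beta$ for some $\varepsilon\in S$, and $\Tr(b)=\Nr(b+1)-\Nr(b)-1\in\wt A$, so $P_b$ has the required shape $X^2-tX+\varepsilon\beta$ with $t\in\wt A$. Because $D$ is a division algebra and $b\notin A=F\cap\calO_i$, the subalgebra $F[b]$ is a quadratic field extension of $F$, whence $P_b$ is irreducible and there is a unique $F$-embedding $\varphi_b\colon K_{P_b}\hookrightarrow D$ with $\varphi_b(x)=b$. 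Putting $B_b:=\varphi_b^{-1}\bigl(\varphi_b(K_{P_b})\cap\calO_i\bigr)$, the embedding $\varphi_b$ is optimal by construction, i.e. $\varphi_b\in\Emb(B_b,\calO_i)$, and I would send $b$ to the datum $(P_b,B_b,\varphi_b)$.

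First I would check that this lands in the right-hand side. The local--global argument already recorded just before the lemma gives $P_b\in\calP_{D,\grn}$: the field $F[b]\cong K_{P_b}$ embeds into $D$, hence into $D_v$ at every ramified place $v$, which forces $(K_{P_b})_v$ to be a field and therefore $t^2-4\varepsilon\beta\notin F_v^2$; in fact $P_b\in\calP_{\calO,\grn}$. Next, since $x\in B_P=A[x]$ and $b=\varphi_b(x)\in\calO_i$, together with $A\subseteq\calO_i$, one gets $\varphi_b(B_P)\subseteq\calO_i\cap\varphi_b(K_{P_b})=\varphi_b(B_b)$, so that $B_P\subseteq B_b\subset K_{P_b}$, exactly as required by the indexing set of the coproduct.

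The inverse map sends a triple $(P,B,\varphi)$ with $P\in\calP_{D,\grn}$, $B_P\subseteq B\subset K_P$ and $\varphi\in\Emb(B,\calO_i)$ to $b:=\varphi(x)$. Since $x\in B_P\subseteq B$, one has $b\in\varphi(B)\subseteq\calO_i$; since $\varphi$ is an $F$-algebra map and $P(x)=0$ in $K_P$, the element $b$ satisfies $P$, so $\Nr(b)$ equals the constant term $\varepsilon\beta$ of $P$ and thus $b\in\CZ_i$; and irreducibility of $P$ forces $b\notin F$, hence $b\in\CZ_i-A$. That the two assignments are mutually inverse is then immediate: from $b$ we recover $\varphi_b(x)=b$; from $(P,B,\varphi)$ the characteristic polynomial of $\varphi(x)$ is precisely $P$ (both are monic, quadratic, irreducible and vanish at $\varphi(x)$), the induced embedding coincides with $\varphi$ because both have domain $K_P$ and send $x\mapsto\varphi(x)$, and $B=\varphi^{-1}(\varphi(K_P)\cap\calO_i)$ recovers the optimal order $B_b$.

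The step that requires the most care --- and the reason the coproduct is taken over $K_P$ equipped with its distinguished root $x$ rather than over abstract quadratic fields --- is the bookkeeping between an element and its conjugate. The conjugate $\bar b=\Tr(b)-b$ shares the characteristic polynomial $P$ but need not lie in $\calO_i$, and even when it does it corresponds to the \emph{distinct} embedding $x\mapsto\bar b$; using the distinguished element $x$ is exactly what makes the assignment $b\mapsto(P_b,\varphi_b)$ single-valued and prevents the two roots from being conflated (compare the example $A[\epsilon\zeta_6]\neq A[\epsilon\zeta_6^{-1}]$ noted above). Once this is pinned down there is nothing further to verify: optimality of $\varphi_b$ is automatic from the definition of $B_b$, finiteness of each $\Emb(B,\calO_i)$ has already been established, and the disjointness of the coproduct over the pairs $(P,B)$ matches the fact that $P_b$ and $B_b$ are uniquely determined by $b$.
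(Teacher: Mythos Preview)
Your proof is correct and follows the same construction as the paper: send $b$ to the triple $(P_b,B_b,\varphi_b)$ with $P_b$ the characteristic polynomial, $\varphi_b$ the embedding determined by $\varphi_b(x)=b$, and $B_b=\varphi_b^{-1}(\calO_i)$; and conversely send $(P,B,\varphi)$ to $\varphi(x)$. You have simply filled in the verifications (that $P_b\in\calP_{D,\grn}$, that $B_P\subseteq B_b$, and that the assignments are mutually inverse) which the paper leaves to the reader.
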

\begin{proof}
  To each element $b\in \CZ_i-A$, one associates a triple
  $(P,B,\varphi)$ in the right hand side as follows: $P$ is the
  characteristic polynomial of $b$,  $\varphi:K_P\to D$ is the
  $F$-embedding determined by $\varphi(x)=b$, where $x$ is the image of
  $X$ in $K_P$ and $B:= \varphi^{-1}(\calO_i)$, which ensures that
  $\varphi$ is an optimal embedding. 

 Conversely, to each
  triple $(P,B,\varphi)$ in the right hand side, one associates the
  element $b:=\varphi(x)$ in $\CZ_i-A$. Clearly, the element $b$ and
  the triple $(P, B,\varphi)$ determine each other uniquely and this
  gives a natural bijection between these two sets.
\end{proof}
\begin{defn}\label{defn:mass}
  The mass of $\OO$ is defined as
\[\Mass(\calO):=\sum_{i=1}^h\frac{1}{[\OO_i^\times :A^\times]}=\sum_{i=1}^h \frac{1}{w_i}.\]
\end{defn}

\begin{thm}[Eichler Trace Formula, first
  version]\label{thm:trac-brandt-matr-for1}
  We have $\Tr \BV(\grn)\neq 0$ only when the ideal $\grn$ is a
  principal and generated by a totally positive element. When
  $\grn$ is generated by a totally positive element $\beta$, the trace
  formula for $\BV(\grn)$ is given by 
\begin{equation}
  \label{eq:4.8}
   \Tr \BV(\grn)=\delta_\grn \cdot \Mass(\calO)+\frac{1}{2} \sum_{P\in
  \calP_{D,\grn}}\,\sum_{B_P\subseteq 
  B\subset K_P} M(B),    
\end{equation}  
where $\delta_\grn$ is defined by (\ref{eq:50}), and 
\begin{equation}\label{eq:23}
 M(B):=\frac{h(B)}{w(B)} \prod_{p} m_p(B).   
\end{equation}
\end{thm}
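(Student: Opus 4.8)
The plan is to compute $\Tr\BV(\grn)=\sum_{i=1}^h\BV_{ii}(\grn)$ by feeding in the identity (\ref{eq:27}), which reduces each diagonal entry to the count $\BV_{ii}(\grn)=\abs{\CZ_i}/(2w_i)$. The non-vanishing assertion then falls out immediately: since every $\BV_{ii}(\grn)$ is a nonnegative integer, $\Tr\BV(\grn)\neq 0$ forces some $\CZ_i$ to be nonempty, so there is $b\in\calO_i$ with $\Nr(b)\wt{A}=\grn$; as $D$ is totally definite the reduced norm $\Nr(b)$ is totally positive, and hence $\grn$ must be principal and generated by a totally positive element. I would then assume $\grn=\wt{A}\beta$ with $\beta$ totally positive and concentrate on the displayed formula.

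First I would split each $\CZ_i$ into its central and non-central parts, $\CZ_i=(\CZ_i\cap A)\sqcup(\CZ_i-A)$. The central part is controlled by (\ref{eq:4.4}), giving $\abs{\CZ_i\cap A}=2\delta_\grn$, a quantity independent of $i$. The non-central part is where the optimal-embedding machinery enters: Lemma~\ref{4.3} supplies a bijection $\CZ_i-A\simeq\coprod_{P\in\calP_{D,\grn}}\coprod_{B_P\subseteq B\subset K_P}\Emb(B,\calO_i)$, whence $\abs{\CZ_i-A}=\sum_{P}\sum_{B}m(B,\calO_i)$. Combining the two pieces yields $\abs{\CZ_i}=2\delta_\grn+\sum_{P}\sum_{B}m(B,\calO_i)$.

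Substituting this into $\Tr\BV(\grn)=\sum_i\abs{\CZ_i}/(2w_i)$ and separating the two contributions gives
\[
\Tr\BV(\grn)=\delta_\grn\sum_{i=1}^h\frac{1}{w_i}+\frac{1}{2}\sum_{P\in\calP_{D,\grn}}\sum_{B_P\subseteq B\subset K_P}\sum_{i=1}^h\frac{m(B,\calO_i)}{w_i},
\]
where interchanging the finite sums is harmless. The first term is $\delta_\grn\cdot\Mass(\calO)$ by Definition~\ref{defn:mass}. For the inner sum of the second term I would invoke (\ref{eq:4.2}) to rewrite $m(B,\calO_i)/w_i=m(B,\calO_i,\calO_i^\times)/w(B)$, and then apply Lemma~\ref{3.2.1}, which evaluates $\sum_{i}m(B,\calO_i,\calO_i^\times)=h(B)\prod_p m_p(B)$; this yields $\sum_{i}m(B,\calO_i)/w_i=h(B)\prod_p m_p(B)/w(B)=M(B)$, completing the formula.

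The genuine conceptual content is already packaged into the lemmas quoted above, so the remaining step is essentially bookkeeping; the main obstacle has been front-loaded into Lemma~\ref{4.3} and Lemma~\ref{3.2.1}. The one point demanding care, and the reason the argument is not a verbatim transcription of the classical $A=O_F$ case, is the possibility that $\wt{A}\supsetneq A$: this is precisely what forces the representative set $S$ to be chosen for $\wt{A}^\times_+/(A^\times)^2$ and what produces the factor $2$ in (\ref{eq:27}) via the kernel $\ker(A^\times\xrightarrow{\Nr}\wt{A}^\times)=\{\pm1\}$. Verifying that $\delta_\grn$, the central count (\ref{eq:4.4}), and the embedding normalization (\ref{eq:4.2}) remain mutually consistent under this enlarged norm group is the only place where an error could plausibly creep in.
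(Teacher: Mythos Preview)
Your proposal is correct and follows essentially the same route as the paper: split $\CZ_i$ into its central and non-central parts via (\ref{eq:4.4}) and Lemma~\ref{4.3}, then pass from $m(B,\calO_i)/w_i$ to $m(B,\calO_i,\calO_i^\times)/w(B)$ via (\ref{eq:4.2}) and sum using Lemma~\ref{3.2.1}. Your explicit justification of the non-vanishing claim (via total definiteness of $D$) is a small addition, as the paper records this fact earlier just after (\ref{eq:49}).
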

\begin{proof}
We have
\begin{equation}
  \label{eq:4.7}
  \begin{split}
  \BV_{ii}(\grn)&=\frac{\abs{
  \CZ_i}}{2w_i}=\frac{\abs{\CZ_i-A}}{2w_i}+\frac{2\delta_\grn}{2w_i} \\
  & =\frac{\delta_\grn}{w_i} + \frac{1}{2} \sum_{P\in
  \calP_{D,\grn}}\,\sum_{B_P\subseteq 
  B\subset K_P} \frac{\abs{\Emb(B, \calO_i)}}{w_i} \quad 
  \text{(\,Lemma~\ref{4.3}\,)} \\
  & =\frac{\delta_\grn}{w_i}+ 
  \frac{1}{2} \sum_{P\in \calP_{D,\grn}}\,\sum_{B_P\subseteq
  B\subset K_P} \frac{m(B, \calO_i, \calO_i^\times)}{w(B)} \quad
  (\text{by (\ref{eq:4.2})}). 
  \end{split}
\end{equation}
Summing over $i=1,\dots, h$ 
and by Lemma~\ref{3.2.1}, one obtains (\ref{eq:4.8}) for
the trace of the Brandt matrix $\BV(\grn)$.
\end{proof}

\begin{sect}
  We would like to count the right hand side of (\ref{eq:4.6}) by
  regrouping the elements according to the orders $B$.  For a fixed
  $1\leq i\leq h$, consider the quadruples $(B,P, \varphi, \alpha)$
  consisting of the following objects:
  \begin{enumerate}[(a)]
  \item a quadratic proper $A$-order $B$ with fraction field $K$,
    which is a totally imaginary quadratic extension of $F$ embeddable into $D$,
 \item a polynomial $P\in \PP_{D, \grn}$,  
 \item an optimal embedding $\varphi\in \Emb(B, \OO_i)$,
   \item an $F$-isomorphism $\alpha: K_P\to K$ such that $B_P\subseteq
    \alpha^{-1}(B) \subset K_P$.  Equivalently, $\alpha\in \Hom_A(B_P,
    B)$. 
  \end{enumerate}
  Clearly, each such quadruple defines a unique element $b\in \CZ_i-A$
  given by $b:= \varphi(\alpha(x))$.  Two quadruples $(B_r, P_r, \varphi_r,
 \alpha_r )_{r=1,2}$ are identified if $P_1=P_2$ and there
  exists an isomorphism $\rho: B_1\to B_2$ such that
  $\varphi_1=\varphi_2\circ \rho$, $\alpha_2=\rho\circ \alpha_1$. 
  
  Suppose that two quadruples $(B_r, P_r, \varphi_r,
  \alpha_r)_{r=1,2}$ give rise to the same $b\in \CZ_i-A$. Then
  necessarily $P_1=P_2$ since both are the characteristic
  polynomial of $b$.  Denote this polynomial by $P$. An $F$-embedding
  $K_P\hookrightarrow D$ is uniquely determined by the image of
  $x$. So $\varphi_1\circ \alpha_1=\varphi_2\circ \alpha_2$. In
  particular,
  \begin{equation}
    \label{eq:11}
  B_P\subseteq
  \alpha_1^{-1}(B_1)=\alpha_1^{-1}\varphi_1^{-1}(\OO_i)
  =\alpha_2^{-1}\varphi_2^{-1}(\OO_i)=
  \alpha_2^{-1}(B_2)\subset K_P.  
  \end{equation}
  So $B_1$ and $B_2$ are isomorphic.  Without lose of generality, we
  may assume that $B:=B_1=B_2$ from the very beginning. Note that
  $\varphi_1=\varphi_2$ implies that $\alpha_1=\alpha_2$ and vice
  versa.  Suppose that $\alpha_2=\iota\circ\alpha_1$, where $\iota\in
  \Gal(K/F)$ is the unique nontrivial isomorphism (i.e. the complex
  conjugation).  Then $\varphi_1=\varphi_2\circ\iota$, and it follows
  from (\ref{eq:11}) that $\iota(B)=B$.  On the other hand, if $(B,P,
  \varphi, \alpha)$ satisfies conditions (a)--(d) and $\iota(B)=B$,
  then $(B, P, \varphi\circ \iota,  \iota\circ \alpha)$ again satisfies
  these conditions, and the two quadruples give rise to the same
  element in $\CZ_i-A$.  




  
  Recall that $\grn=\wt{A}\beta$.  For each quadratic proper $A$-order
  $B$, let $T_{B,\grn}\subset B$ be the finite set
  \begin{equation}\label{eq:20}
    T_{B, \grn}:=\{x\in B-A\, |\,
  N_{K/F}(x)=\varepsilon\beta \ \ \text{for some $\varepsilon\in S$}\, \}, 
  \end{equation}
  and $\PP_{B, \grn}$ be the set of characteristic polynomials of
  elements in $T_{B,\grn}$. In general $\grn$ should be clear from the
  context, so we drop it from the subscript and write $T_B$ and
  $\PP_B$ instead.  We define
  \begin{gather}
    \CZ_{B,i}:=\{(P, \varphi, \alpha)\mid P\in \PP_B, \varphi\in \Emb(B, \OO_i),
    \alpha\in \Hom_A(B_P,B)  \}. 
  \end{gather}
  Note that if $P\in \PP_B$ but $P\not\in\PP_{\OO,
    \grn}$, 
  then $\Emb(B, \OO_i)=\emptyset $ for all $1\leq
  i\leq h$.  The fiber of the projection map $\CZ_{B,i}\to \PP_B$ over
  each $P\in \PP_B$ is
 \[
 \EZ_{B,P,i}:=\Emb(B, \OO_i)\times \Hom_A(B_P,B).\] The set
 $\EZ_{B,P,i}$ is equipped with an action of $\Gal(K/F)$ in the
 following way: if $\iota(B)=B$, then $\iota$ acts by sending
 $(\varphi, \alpha)\mapsto (\varphi\circ \iota, \iota\circ \alpha)$;
 otherwise $\iota$ acts trivially. It is clear that this action is
 independent of $P$ and $i$.  Let $\Gal(K/F)$ act on
 $\CZ_{B,i}$ fiber-wisely. We have
\begin{equation}\label{eq:21}
  \CZ_i-A\simeq \coprod_{B} \CZ_{B,i}/\Gal(K/F),
\end{equation}
where the disjoint union is taken over all the non-isomorphic
quadratic proper $A$-orders $B$.  In the next two subsections, we
calculate the cardinality of $\CZ_{B,i}/\Gal(K/F)$. There are two
cases to consider, depending on whether $\iota(B)=B$ or not.
\end{sect}
\begin{sect}
  Suppose that $\iota(B)=B$. We have 
\[\CZ_{B,i}/\Gal(K/F)=\coprod_{P\in \PP_B}\EZ_{B,P,i}/\Gal(K/F). \]
Note that $\Hom_A(B_P, B)=\Hom_F(K_P, K)$ for all $P\in \PP_B$ in this
case. Any choice of a fixed element $\alpha\in \Hom_F(K_P, K)$ induces
an isomorphism
  \begin{equation}
    \label{eq:16}
\Emb(B, \OO_i)\simeq \EZ_{B,P,i}/\Gal(K/F), \qquad \varphi\mapsto
(\varphi, \alpha).
  \end{equation}
  Therefore, \[\abs{\CZ_{B,i}/\Gal(K/F)}= \abs{\PP_B}\cdot \abs{\Emb(B, \OO_i)}.\]
Since $\iota(B)=B$, an element $b\in T_B$ if and only if $\iota(b)\in
T_B$. We have a surjective 2-to-1 map $T_B\to \PP_B$. It follows that 
\begin{equation}\label{eq:22}
  \abs{\CZ_{B,i}/\Gal(K/F)}= \frac{1}{2}\abs{T_B}\cdot \abs{\Emb(B, \OO_i)}.
\end{equation}
\end{sect}

\begin{sect}
  Suppose that $\iota(B)\neq B$.  Let $\QQ_B$ be the set of pairs
  $\{(P, \alpha)\mid P\in \PP_B, \alpha\in \Hom_A(B_P, B)\}$. Since
  $\Gal(K/F)$ acts trivially, we have 
\[\CZ_{B,i}/\Gal(K/F)=\CZ_{B,i}=\coprod_{(P, \alpha)\in \QQ_B}\Emb(B, \OO_i).\]
We claim that there is a canonical bijection between $T_B$ and
$\QQ_B$. Indeed, each pair $(P, \alpha)\in \QQ_B$ determines a unique
element $b:=\alpha(x)\in T_B$, where $x$ is the distinguished element
in $K_P$. On the other hand, given any element $b\in T_B$, we just set
$P$ to be the characteristic polynomial of $b$, and $\alpha: B_P\to B$
to be the canonical homomorphism sending $x$ to $b$. Therefore,  if
$\iota(B)\neq B$, then
\begin{equation}\label{eq:24}
  \abs{\CZ_{B,i}/\Gal(K/F)}= \abs{T_B}\cdot \abs{\Emb(B, \OO_i)}.
\end{equation}
\end{sect}






Let $\delta(B)$ be the
  symbol 
  \begin{equation}\label{eq:19}
\delta(B):=\begin{cases}
  1 \qquad  & \text{if $\iota(B)=B$};\\
0\qquad  & \text{otherwise}.
\end{cases}
  \end{equation}

  \begin{thm}[Eichler Trace Formula, second
    version]\label{thm:trac-brandt-matr-for2}
    Suppose that $\grn=\wt{A}\beta$ is generated by a totally positive
    element $\beta\in \wt{A}$. Let $\abs{T_{B,\grn}}$ be the
    cardinality of the set $T_{B,\grn}$ 
    defined in (\ref{eq:20}). The trace formula for $\BV(\grn)$ is
    given by
    \[\Tr\BV(\grn)=\delta_\grn \cdot \Mass(\calO)+\frac{1}{4}
    \sum_{B}(2-\delta(B)) M(B) \abs{T_{B,\grn}}. \]
Here in the last summation $B$ runs through all (non-isomorphic) 
quadratic proper $A$-orders which can be embedded into $D$. 
  \end{thm}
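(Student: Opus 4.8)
The plan is to combine the first version of the Eichler Trace Formula (Theorem~\ref{thm:trac-brandt-matr-for1}) with the combinatorial bookkeeping developed in the three preceding \texttt{sect} environments, which reorganize the count of $\CZ_i - A$ by isomorphism classes of quadratic proper $A$-orders $B$ rather than by characteristic polynomials $P$. First I would start from the key identity $\BV_{ii}(\grn) = \abs{\CZ_i}/(2w_i)$ from (\ref{eq:27}), split off the central contribution using $\abs{\CZ_i \cap A} = 2\delta_\grn$ from (\ref{eq:4.4}), and rewrite the noncentral part $\CZ_i - A$ via the bijection (\ref{eq:21}), namely $\CZ_i - A \simeq \coprod_B \CZ_{B,i}/\Gal(K/F)$, where $B$ ranges over non-isomorphic quadratic proper $A$-orders embeddable into $D$.

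Next I would plug in the two cardinality computations already carried out. When $\iota(B) = B$ (so $\delta(B) = 1$), formula (\ref{eq:22}) gives $\abs{\CZ_{B,i}/\Gal(K/F)} = \tfrac{1}{2}\abs{T_{B,\grn}}\cdot\abs{\Emb(B,\OO_i)}$; when $\iota(B) \neq B$ (so $\delta(B) = 0$), formula (\ref{eq:24}) gives $\abs{\CZ_{B,i}/\Gal(K/F)} = \abs{T_{B,\grn}}\cdot\abs{\Emb(B,\OO_i)}$. The point is that both cases are captured uniformly by the single coefficient $(2 - \delta(B))/2$: indeed $(2-\delta(B))/2$ equals $\tfrac12$ when $\delta(B)=1$ and equals $1$ when $\delta(B)=0$. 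Thus for every quadratic proper $A$-order $B$ I can write
\begin{equation*}
  \abs{\CZ_{B,i}/\Gal(K/F)} = \frac{2-\delta(B)}{2}\,\abs{T_{B,\grn}}\cdot\abs{\Emb(B,\OO_i)}.
\end{equation*}

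Assembling these, I obtain
\begin{equation*}
  \BV_{ii}(\grn) = \frac{\delta_\grn}{w_i} + \frac{1}{2w_i}\sum_B \frac{2-\delta(B)}{2}\,\abs{T_{B,\grn}}\cdot\abs{\Emb(B,\OO_i)},
\end{equation*}
where I have used $\abs{\Emb(B,\OO_i)} = m(B,\OO_i)$. Summing over $i = 1,\dots,h$ then produces $\Tr\BV(\grn) = \delta_\grn\Mass(\calO) + \tfrac14\sum_B (2-\delta(B))\abs{T_{B,\grn}}\sum_{i=1}^h \tfrac{m(B,\OO_i)}{w_i}$. The final step is to evaluate the inner sum over $i$: by (\ref{eq:4.2}) we have $m(B,\OO_i)/w_i = m(B,\OO_i,\OO_i^\times)/w(B)$, and then Lemma~\ref{3.2.1} gives $\sum_{i=1}^h m(B,\OO_i,\OO_i^\times) = h(B)\prod_p m_p(B)$, so the inner sum equals $h(B)\prod_p m_p(B)/w(B) = M(B)$ by the definition (\ref{eq:23}). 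Substituting yields exactly the claimed formula $\Tr\BV(\grn) = \delta_\grn\Mass(\calO) + \tfrac14\sum_B(2-\delta(B))M(B)\abs{T_{B,\grn}}$.

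The substantive content has really all been front-loaded into the three \texttt{sect} environments and into Lemmas~\ref{4.3} and~\ref{3.2.1}, so the proof of this theorem is mainly a matter of careful assembly; the one place demanding genuine attention is verifying that the uniform coefficient $(2-\delta(B))/2$ correctly interpolates between the two $\Gal(K/F)$-orbit counts, i.e.\ that the factor of $\tfrac12$ from folding under complex conjugation in the $\iota(B)=B$ case exactly matches the absence of folding in the $\iota(B)\neq B$ case. I expect the main obstacle to be bookkeeping rather than conceptual: keeping the disjoint-union decomposition (\ref{eq:21}) compatible with the regrouping of $\calP_{D,\grn}$ into orders $B$, and ensuring no order $B$ is double-counted across its two embeddings $A[x]$ and $A[x']$ that may fail to coincide inside $K_P$.
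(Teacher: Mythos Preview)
Your proposal is correct and follows essentially the same approach as the paper: the paper's proof simply says to rerun the argument of Theorem~\ref{thm:trac-brandt-matr-for1} but replace the use of Lemma~\ref{4.3} by the combination of (\ref{eq:21}), (\ref{eq:22}) and (\ref{eq:24}), which is exactly what you spell out. Your unified coefficient $(2-\delta(B))/2$ is the right way to package the two cases, and the finishing steps via (\ref{eq:4.2}) and Lemma~\ref{3.2.1} are identical to those in the first version.
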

  \begin{proof}
    The proof employs the same line of arguments as
    Theorem~\ref{thm:trac-brandt-matr-for1}, except that instead of
    applying Lemma~\ref{4.3}, one combines (\ref{eq:21}),
    (\ref{eq:22}) and (\ref{eq:24}).
  \end{proof}
  Note that if $\calO$ is closed under the canonical involution of
  $D$, then $\wt{A}=A$ by Lemma~\ref{lem:O-closed-under-can-inv}. In
  this case, only those quadratic proper $A$-orders $B$ closed under
  the complex conjugation need to be considered in the trace formula,
  as $\Emb(B,\calO_i)$ is empty for all $1\leq i \leq h$ if
  $\delta(B)=0$. This observation applies to the class number formula below
  as well. 

  When $\grn=(1)=\wt{A}$, the Brandt matrix $\BV(\wt{A})$ is the identity
  and $\Tr \BV(\wt{A})=h(\calO)$. 

\begin{cor}[Class number formula] \label{class_number_formula} 
\begin{equation}
  \label{eq:4.9}
  \begin{split}
  h(\calO) & =\Mass(\calO)+\frac{1}{2} \sum_{P\in
  \calP_{D,(1)}}\,\,\sum_{B_P\subseteq B\subset K_P} M(B) \\
   & =\Mass(\calO)+\frac{1}{2} \sum_{ w(B)>1}(2-\delta(B))
  h(B)(1-w(B)^{-1})\prod_{p} m_p(B). \\      
  \end{split}
\end{equation}
Here in the last summation $B$ runs through all (non-isomorphic)
quadratic proper $A$-orders with
$w(B)=[B^\times:A^\times]>1$. Equivalently,
\begin{equation}
  \label{eq:18}
  h(\calO)=\Mass(\calO)+\frac{1}{2} \sum_{K}\sum_{\substack{B\subset K,\\ w(B)>1}}
  h(B)(1-w(B)^{-1})\prod_{p} m_p(B), \\      
\end{equation}
where $K$ runs through all (non-isomorphic) totally imaginary quadratic
extensions of $F$ embeddable into $D$, and $B$ runs through
all the distinct quadratic proper $A$-orders in $O_K$ with
$w(B)>1$. 
\end{cor}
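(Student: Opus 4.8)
The plan is to derive the class number formula (Corollary~\ref{class_number_formula}) as the specialization of the Eichler Trace Formula (Theorem~\ref{thm:trac-brandt-matr-for2}) to the case $\grn=(1)=\wt{A}$, generated by the totally positive element $\beta=1$. First I would observe that when $\grn=\wt{A}$, the Brandt matrix $\BV(\wt{A})$ is the identity matrix of size $h=h(\calO)$, so its trace is exactly $h(\calO)$; this is precisely the normalization remark preceding the corollary. With $\beta=1$ the ideal $\grn=\wt{A}\cdot 1 = \wt{A}\,a^2$ for $a=1\in A$, so $\delta_\grn=1$ by (\ref{eq:50}), and the leading term in Theorem~\ref{thm:trac-brandt-matr-for2} becomes simply $\Mass(\calO)$.

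Next I would unwind the combinatorial sum $\tfrac{1}{4}\sum_B (2-\delta(B)) M(B)\abs{T_{B,\grn}}$ for this specific $\grn$. The key is to compute $\abs{T_{B,\grn}}$ when $\grn=\wt{A}$. By definition (\ref{eq:20}), $T_{B,\grn}$ consists of $x\in B-A$ with $N_{K/F}(x)=\varepsilon$ for some totally positive unit $\varepsilon\in S\subset \wt{A}^\times_+$; these are exactly the elements of $B^\times - A^\times$ whose norm is a totally positive unit, counted modulo the identification built into the representative set $S$ of $\wt A^\times_+/(A^\times)^2$. Tracing through the same unit-group bookkeeping used in deriving (\ref{eq:27}) — where the kernel of $A^\times\xrightarrow{\Nr}\wt A^\times$ is $\{\pm 1\}$ — I expect $\abs{T_{B,\grn}}$ to equal $2(w(B)-1)$, since the $w(B)=[B^\times:A^\times]$ units modulo $A^\times$ contribute, the identity class (corresponding to $A$) is removed, and each noncentral class is counted with the factor of $2$ coming from the $\pm$ ambiguity. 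Substituting this into the formula gives the factor $(1-w(B)^{-1})$ after I divide out $w(B)$ hidden in $M(B)=h(B)w(B)^{-1}\prod_p m_p(B)$: indeed $\tfrac14 M(B)\cdot 2(w(B)-1) = \tfrac12 h(B)(1-w(B)^{-1})\prod_p m_p(B)$, matching the middle line of (\ref{eq:4.9}).

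The remaining work is to justify the range of summation. In Theorem~\ref{thm:trac-brandt-matr-for2} the sum is over all quadratic proper $A$-orders $B$ embeddable into $D$, but the summand vanishes precisely when $T_{B,\grn}=\emptyset$, i.e.\ when $B^\times=A^\times$, equivalently $w(B)=1$; thus only orders with $w(B)>1$ survive, yielding the stated index set. Finally I would regroup the sum over orders $B$ by first fixing the totally imaginary quadratic field $K$ and then summing over the quadratic proper $A$-orders $B\subset K$ with $w(B)>1$, which gives the equivalent form (\ref{eq:18}); here I must note that proper $A$-orders $B$ need not lie in $O_K$ in general, so the phrase ``$B\subset O_K$'' in the statement of (\ref{eq:18}) should be read as ranging over all quadratic proper $A$-orders in $K$, with $\prod_p m_p(B)$ accounting for the local optimal embedding numbers.

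I expect the main obstacle to be the precise count $\abs{T_{B,\grn}}=2(w(B)-1)$, because it requires careful tracking of how the totally-positive-unit representatives in $S$ interact with the squaring map $A^\times\to(A^\times)^2$ and with the $\{\pm 1\}$ kernel. The subtlety is exactly the phenomenon emphasized throughout Section~\ref{sec:trace_Brandt_mat}, namely that $\wt A$ may strictly contain $A$, so that $\wt A^\times_+/(A^\times)^2$ is genuinely nontrivial and the factor of $2$ must be shown to arise uniformly across all orders $B$ rather than being absorbed incorrectly; verifying that this factor is the same $2$ appearing in (\ref{eq:27}) is the crux that makes the two displayed lines of (\ref{eq:4.9}) agree.
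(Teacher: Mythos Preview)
Your approach is essentially the paper's: specialize the trace formula at $\grn=\wt A$, use $\Tr\BV(\wt A)=h(\calO)$ and $\delta_{(1)}=1$, and compute $\abs{T_{B,(1)}}=2(w(B)-1)$. Two corrections are worth making. First, the count $\abs{T_{B,(1)}}=2(w(B)-1)$ is simpler than you suggest and does not hinge on the $\wt A$ versus $A$ subtlety: the paper just observes that the natural map $T_{B,(1)}\to (B^\times/A^\times)\smallsetminus\{\bar 1\}$, $x\mapsto \bar x$, is surjective with fibers $\{\pm x\}$ (after scaling by the unique $a\in A^\times/\{\pm1\}$ that puts $N_{K/F}(x)$ into $S$), so the cardinality is immediate. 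Second, your passage from (\ref{eq:4.9}) to (\ref{eq:18}) misses the point: every proper $A$-order $B$ \emph{does} lie in $O_K$ (orders in $K$ are by definition subrings of $O_K$), and the reason the factor $(2-\delta(B))$ disappears is that in (\ref{eq:18}) one sums over \emph{distinct} suborders of a fixed $K$ rather than isomorphism classes, so when $\iota(B)\neq B$ both $B$ and $\iota(B)$ appear separately, each contributing once, matching the factor $2$ in (\ref{eq:4.9}).
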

\begin{proof}
  The first part of (\ref{eq:4.9}) follows directly from
  Theorem~\ref{thm:trac-brandt-matr-for1}. For each quadratic proper
  $A$-order $B$, let $q=w(B)$, and $B^\times/A^\times=\{\bar 1,\bar
  x_2, \dots, \bar x_q \}$. As the map $T_{B, (1)}\to \{\bar
  x_2,\dots, \bar x_q\}$ is surjective and two-to-one, sending $\pm x
  \mapsto\bar{x}$, one gets $\# T_{B, (1)}=2(q-1)$. So the second part
  of (\ref{eq:4.9}) follows from
  Theorem~\ref{thm:trac-brandt-matr-for2}. Formula (\ref{eq:18}) is
  just a more intuitive reformulation of (\ref{eq:4.9}).  Indeed, if
  $B\neq \iota(B)$, then both $B$ and $\iota(B)$ appears in the right
  hand side of (\ref{eq:18}), giving us $2 h(B)(1-w(B)^{-1})\prod_{p}
  m_p(B)$ for the isomorphic class of $B$.
\end{proof}

We call the sum in (\ref{eq:4.9}) the \textit{elliptic part} (of
the class number formula) and denote it by $\Ell(\OO)$. In other words,
\begin{equation}
\Ell(\OO):= \label{eq:28}
\frac{1}{2} \sum_{ w(B)>1}(2-\delta(B))
  h(B)(1-w(B)^{-1})\prod_{p} m_p(B). 
\end{equation}

\subsection{Local optimal embeddings}
\label{sec:15}

When $A=O_F$ and $\calO$ is an Eichler $O_F$-order of level $\grN$,
where $\grN\subseteq O_F$ is a square-free prime-to-$\scrD$ ideal, 
one has the formula  \cite[p.~94]{vigneras} for all prime ideals
$\grp\subset  O_F$, 
\[  m_\grp(B):=m(B_\grp, \calO_\grp, \calO_\grp^\times)=
\begin{cases}
  1-\left(\frac{B}{\grp}\right )  & \text{if
  $\grp|\scrD$;} \\
  1+\left(\frac{B}{\grp} \right ) &  \text{if
  $\grp|\grN$;}\\
  1 & \text{otherwise}.
\end{cases} \quad  \]
 Thus, one gets
\begin{equation}
  \label{eq:loc_opt}
  \prod_{\grp} m_\grp(B)=\prod_{\grp|\scrD} \left (
  1-\left(\frac{B}{\grp}\right )\right ) \prod_{\grp|\grN} \left (
  1+\left(\frac{B}{\grp}\right )\right ). 
\end{equation}
Here $(B/\grp)$ is the Eichler symbol, defined as follows:
\[ \left(\frac{B}{\grp}\right ):=
\begin{cases}
  \! \left(\frac{K}{\grp}\right ) & \text{if $\grp\nmid \grf(B)$;} \\
  \ 1 & \text{otherwise;}  
\end{cases} \]
where $\grf(B)\subseteq O_F$ is the conductor of $B$ and $(K/\grp)$ is
the Artin symbol 
\[ \left(\frac{K}{\grp}\right ):=
\begin{cases}
  \ 1 & \text{if $\grp$ splits in $K$;} \\
  -1 & \text{if $\grp$ is inert in $K$;} \\
  \ 0 & \text{if $\grp$ is ramified in $K$.} \\
\end{cases} \]

When $\calO$ is an Eichler $O_F$-order with arbitrary prime-to-$\scrD$
level $\grN$, 
Hijikata \cite[Theorem 2.3, p.~66]{hijikata:JMSJ1974} computed the numbers
of equivalence classes of  
the local optimal embeddings from $B_\grp$ into $\calO_\grp$. 

However, the situation is more delicate when $Z(\OO)=A\subsetneq O_F$. Let
$B\subset K$ and $\calO\subset D$ be proper $A$-orders.  Suppose that
$D_p\simeq \Mat_2(F_p)=\End_{F_p}(V_p)$, where $V_p$ is a free
$F_p$-module of rank two, and $\calO_p=\End_{A_p}(L_p)$, where
$L_p$ is a full $A_p$-lattice in $V_p$.

Fix an embedding $\varphi_0:K_p \to D_p$ of $F_p$-algebras. We view
$V_p$ as a free $K_p$-module of rank one through $\varphi_0$. A
lattice $M_p\subset V_p$ is said to be a \textit{proper $B_p$-lattice}
if $\{x\in K_p\mid \varphi_0(x)M_p\subseteq M_p\}=B_p$.  Let
$\calL(B_p,L_p,V_p)$ denote the set of isomorphism classes of proper
$B_p$-lattices $M_p\subset V_p$ such that there is an isomorphism
$M_p\simeq L_p$ of $A_p$-lattices. We claim that the number $m(B_p,
\calO_p, \calO_p^\times)$ is equal to $|\calL(B_p,L_p,V_p)|$.  Notice
that $m(B_p, \calO_p, \calO_p^\times)$ is the cardinality of
$\varphi_0(K_p)^\times \backslash \calE_p(B_p,
\calO_p)/\calO_p^\times$, where $\calE_p(B_p,\OO_p)=\{g\in D_p^\times
| \varphi_0(K_p)\cap g\OO_p g^{-1}=\varphi_0(B_p)\}$. 
It is straightforward to check that the map $g\mapsto
gL_p$ induces a bijection between the set $\varphi_0(K_p)^\times
\backslash \calE_p(B_p, \calO_p)/\calO_p^\times$ and
$\calL(B_p,L_p,V_p)$. This proves our claim.

We will need some structural theorems for modules over Bass orders. 
A standard reference for Bass orders is the original work
\cite{Bass-1962} of Bass. 
Recall that a $\zz$-order (or a $\zz_p$-order) 
$B$ is a \textit{Bass order} if $B$ is
Gorenstein and any order $B'$ containing $B$ is also Gorenstein. 
Bass orders share the following local property: $B$ is Bass if and
only if the completion $B_p$ is Bass for all primes $p$.
If a $\zz_p$-order $B_p$ is Bass,
then any proper $B_p$-module of rank one is isomorphic to $B_p$. 
Using this and our claim, we obtain the following lemma.

\begin{lem}\label{lem:local-optim-embedd-bass-order}
  Suppose that $\calO_p=\End_{A_p}(L_p)$.  If $B_p$ is a Bass order,
  then $m(B_p, \calO_p, \calO_p^\times)$ is either $0$ or $1$, and
  $m(B_p, \calO_p, \calO_p^\times)=1$ if and only if $B_p\simeq L_p$
  as $A_p$-modules.
\end{lem}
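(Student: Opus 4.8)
The plan is to use the claim established just before the lemma, which identifies $m(B_p, \calO_p, \calO_p^\times)$ with the cardinality $|\calL(B_p, L_p, V_p)|$, the number of isomorphism classes of proper $B_p$-lattices $M_p \subset V_p$ that are isomorphic to $L_p$ as $A_p$-modules. Thus the entire lemma reduces to counting this set under the hypothesis that $B_p$ is Bass and $\calO_p = \End_{A_p}(L_p)$. First I would invoke the structural fact quoted immediately above the lemma: since $B_p$ is a Bass order, every proper $B_p$-module of rank one is isomorphic to $B_p$ itself. Viewing $V_p$ as a free $K_p$-module of rank one via the fixed embedding $\varphi_0$, any proper $B_p$-lattice $M_p \subset V_p$ is a proper $B_p$-module of rank one, and hence $M_p \simeq B_p$ as $B_p$-modules.

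The key consequence is that $\calL(B_p, L_p, V_p)$ has at most one element. Indeed, if $M_p$ and $M_p'$ are two proper $B_p$-lattices in $V_p$, then both are isomorphic to $B_p$ as $B_p$-modules by the Bass property, hence isomorphic to each other as $B_p$-modules, and a fortiori as $A_p$-modules; so they represent the same isomorphism class. This shows $|\calL(B_p, L_p, V_p)| \le 1$, giving that $m(B_p, \calO_p, \calO_p^\times)$ is either $0$ or $1$. It remains to characterize exactly when the set is nonempty. By definition of $\calL(B_p, L_p, V_p)$, the set is nonempty precisely when there exists a proper $B_p$-lattice $M_p$ with $M_p \simeq L_p$ as $A_p$-modules. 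Since every proper $B_p$-lattice is $A_p$-isomorphic to $B_p$ (the underlying $A_p$-module of the distinguished lattice $B_p \subset K_p = V_p$), such an $M_p$ exists if and only if $L_p \simeq B_p$ as $A_p$-modules.

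For the forward direction of this last equivalence, a nonempty $\calL$ supplies some $M_p \simeq B_p$ as $A_p$-modules with $M_p \simeq L_p$, whence $L_p \simeq B_p$. For the reverse, if $L_p \simeq B_p$ as $A_p$-modules, then the standard proper $B_p$-lattice $B_p \subset V_p$ itself satisfies the membership condition, so $\calL$ is nonempty. Combining, $m(B_p, \calO_p, \calO_p^\times) = 1$ if and only if $L_p \simeq B_p$ as $A_p$-modules, and is $0$ otherwise.

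The main obstacle, or rather the point requiring the most care, is ensuring that the Bass property genuinely yields uniqueness of the proper $B_p$-module up to isomorphism in the rank-one case; this is the content of the structural statement cited before the lemma, so I would lean on it directly rather than reprove it. A secondary subtlety is keeping the distinction between isomorphism as $B_p$-modules and isomorphism as $A_p$-modules straight throughout: the uniqueness comes from the $B_p$-module structure, while the nonemptiness criterion is phrased in terms of $A_p$-modules. Since a $B_p$-isomorphism is in particular an $A_p$-isomorphism, the argument passes cleanly between the two, and no further input beyond the preceding claim and the quoted Bass-module structure theorem is needed.
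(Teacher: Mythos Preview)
Your proof is correct and follows exactly the approach the paper intends: the paper itself offers no displayed proof, simply stating that the lemma follows from the identification $m(B_p,\calO_p,\calO_p^\times)=|\calL(B_p,L_p,V_p)|$ together with the Bass-order fact that every proper $B_p$-module of rank one is isomorphic to $B_p$. You have faithfully unpacked precisely these two ingredients.
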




\section{Representation-theoretic interpretation of Brandt matrices}
\label{sec:repr_int_brandt_matrix}

\subsection{A general formulation}\label{trace}

Let $G$ be a unimodular locally compact topological group. Assume
there is a discrete and co-compact subgroup $\Gamma\subset G$. Then
the quotient $\Gamma\backslash G$ is a compact topological space with
right translation action by $G$.  Let $U\subset G$ be an open compact
subgroup.  Choose a Haar measure $dg$ on $G$ with volume one on $U$
and use the counting measure on $\Gamma$.  Since $\Gamma\backslash G$
is compact and $U$ is open, the double coset space $\Gamma\backslash
G/U$ is a finite set. Let $L^2(\Gamma\backslash G)$ be the Hilbert
space of square-integrable $\cc$-valued functions on the compact
topological space $\Gamma\backslash G$. The group $G$ acts on
$L^2(\Gamma\backslash G)$ by right translation, and we denote this
action by $R$.  The subspace $L^2(\Gamma\backslash G)^U$ of
$U$-invariant functions equals $L^2(\Gamma\backslash G/U)$, which is a
finite-dimensional vector space.  Let $\calH(G):=C^\infty_c(G)$ denote
the Hecke algebra of $G$, which consists of all smooth $\cc$-valued
functions on $G$ with compact support, together with the convolution.
The action of $\calH(G)$ on $L^2(\Gamma\backslash G)$ is as follows:
\[ ( R(f)\phi )(x)=\int_G f(g) \phi(xg) dg, \quad f\in \calH(G), \  
\phi\in L^2(\Gamma\backslash G). \]
Let $\calH(G,U)=C^\infty_c(U\backslash G/U)$ denote the subspace of
$U$-bi-invariant functions. For any $f\in \calH(G,U)$, the Hecke
operator $R(f)$ sends the finite-dimensional vector space 
$L^2(\Gamma\backslash G/U)$ into itself.

\subsection{Quaternion algebras, Brandt matrices and Hecke operators}
\label{sec:meaning_brandt_matrix}

Let $D$, $F$, $A$ and $\calO$ be as in Section~\ref{sec:12}. 
Note that $D^\times \subset \wh D^\times$ is not a discrete subgroup
when $[F:\qq]>1$ because the unit group $O_F^\times$ is not finite. 
We consider the following groups:
\[ G:=\wh D^\times/\wh A^\times, \quad \Gamma:=D^\times/A^\times,\quad
\text{and}\quad U:=\wh \calO^\times/\wh A^\times. \]
Then $\Gamma\subset G$ is a discrete and co-compact
subgroup. This allows us to consider Hecke operators on the space
$L^2(\Gamma\backslash G)$ of functions.    
The group $G$ operates transitively on the set of right locally
principal $\calO$-ideals. This gives natural bijections
\begin{equation}
  \label{eq:doublecoset}
  D^\times \backslash \wh D^\times /\wh \calO^\times \simeq
\Gamma\backslash G/U \simeq \Cl(\calO).
\end{equation}
Therefore, $h(\calO)=\dim L^2(\Gamma\backslash G/U)$. If ${\bf1}_U$
denotes the characteristic function of $U$, then the map $R({\bf1}_U)$ is the
identity on $L^2(\Gamma\backslash G/U)$ and $\Tr
R({\bf1}_U)=h(\calO)$. 

Let $\grn\subseteq \wt A$ be a locally principal integral $\wt A$-ideal.
The finite idele group $\wh{F}^\times$ operates on the set of $\wt
A$-ideals. Set
\[ U(\grn):=\{ x\in G\, |\, x \wh \calO \subseteq \wh \calO,\ 
\Nr(x) \wt A=\grn\, \}. \]
This is an open compact subset in $G$ which is stable under $U$ by
left and right action. Using the Cartan decomposition, one easily sees
that $U\backslash U(\grn)/U$ is a finite set. 
Let $g_1,\dots, g_h$ be a complete set of representatives for $D^\times
\backslash \wh D^\times /\wh \calO^\times$, one has
\[ \wh D^\times =\coprod_{i=1}^h D^\times g_i \wh \calO^\times,
\quad \text{and}\quad  G=\coprod_{i=1}^h \Gamma \bar g_i U, \]
where $\bar g_i$ are the images of $g_i$ in $G$. Set $I_i:=g_i
\calO$, then $I_1,\dots, I_h$ form a complete set of
representatives for ideal classes in $\Cl(\calO)$. 

Let $\chi_i$ be the characteristic function for the open compact subset 
$\Gamma\backslash \Gamma \bar g_i U\subset \Gamma\backslash G$. The
set 
$\{\chi_1,\dots, \chi_h\}$ forms a basis for the vector space 
$L^2(\Gamma\backslash G/U)$. Let $f$ be the characteristic function of $U(\grn)$,
which is an element in $\calH(G,U)$, and hence $R(f)$ 
is a linear operator on $L^2(\Gamma\backslash G/U)$. Write 
\[ R(f)\sim (a_{ij}) \]
for the representing matrix with respect to the basis
$\{\chi_i\}$. One has
\[ R(f)(\chi_j)=\sum_{i=1}^h a_{ij}\chi_i. \]
One computes
\[ R(f)(\chi_j)(x)=\int_G f(g) \chi_j(xg) dg=\int_{U(\grn)} \chi_j(xg)
dg. \]
Thus,
\[ a_{ij}=R(f)(\chi_j)(\bar g_i)=\int_{U(\grn)} \chi_j(\bar g_i g)
dg=\int_{U_{ij}} dg ={\rm vol}(U_{ij}), \] 
where 
\[ U_{ij}:=\{g\in U(\grn)\,|\, \bar g_i g\in \Gamma \bar g_j U\}. \]
Each $U_{ij}$ is invariant under right translation of $U$. 
For each fixed $i$ with $1\le i\le h$, the set
$U(\grn)$ is the disjoint union of $U_{ij}$ for $j=1,\dots, h$. For
$g\in U_{ij}$, one has 
\[ \bar{g}_ig\OO\simeq \bar{g}_j\OO=I_j, \quad \text{and}\quad  \bar g_i g
\calO \subseteq \bar g_i \calO=I_i. \]
If one puts $J:=\bar g_i g \calO$, then $\Nr(J)=\grn \Nr(I_i)$. 
As a
result we get a bijection
\[ U_{ij}/U \simeq \{J\subseteq I_i \, |\, J\simeq I_j,\  \Nr(J)=\grn
\Nr(I_i) \}, \quad \text{by} \ g\mapsto \bar g_i g \calO. \]   
Therefore, we get 
\[ a_{ij}=\abs{U_{ij}/U}=B_{ij}(\grn). \]

\begin{thm}\label{brandt_Hecke}
  Let $f$ be the characteristic function of $U(\grn)$ as above. Then the
  Brandt matrix is the representing matrix of the Hecke
  operator $R(f)$ with respect to the basis $\chi_1,\dots, \chi_h$ for
  the vector space $L^2(\Gamma\backslash G/U)$. 
\end{thm}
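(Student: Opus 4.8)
The plan is to unwind the definition of the matrix representing $R(f)$ in the basis $\{\chi_1,\dots,\chi_h\}$ of $L^2(\Gamma\backslash G/U)$ and to match each entry with the count in part (a) of Proposition~\ref{brandt_matrix}. Recall that $\chi_i$ is the characteristic function of the double coset $\Gamma\bar g_i U$ and that these cosets biject with $\Cl(\calO)$ via $\bar g_i\mapsto I_i=g_i\calO$. Writing $R(f)(\chi_j)=\sum_i a_{ij}\chi_i$ and evaluating at the representative $\bar g_i$, so that $\chi_{i'}(\bar g_i)=\delta_{i'i}$, I would read off
\[
a_{ij}=R(f)(\chi_j)(\bar g_i)=\int_{U(\grn)}\chi_j(\bar g_i g)\,dg,
\]
using $f=\mathbf{1}_{U(\grn)}$.

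Next I would observe that the integrand is the characteristic function of
\[
U_{ij}:=\{g\in U(\grn)\mid \bar g_i g\in \Gamma\bar g_j U\},
\]
so that $a_{ij}=\operatorname{vol}(U_{ij})$. Each $U_{ij}$ is stable under right translation by $U$, and for fixed $i$ the sets $U_{ij}$ partition $U(\grn)$ as $j$ runs from $1$ to $h$. Since $U$ is open and the Haar measure is normalized so that $\operatorname{vol}(U)=1$, the volume of the $U$-stable set $U_{ij}$ equals the number of right cosets $\lvert U_{ij}/U\rvert$.

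The decisive step is to identify $U_{ij}/U$ with the set in Proposition~\ref{brandt_matrix}(a). For $g\in U_{ij}$ I would put $J:=\bar g_i g\calO$; membership $g\in U(\grn)$ gives $J\subseteq \bar g_i\calO=I_i$ together with the norm relation $\Nr(J)=\grn\,\Nr(I_i)$, while $\bar g_i g\in\Gamma\bar g_j U$ yields $J\simeq I_j$ as right $\calO$-ideals. Conversely each such $J$ arises from some $g$, and two group elements give the same $J$ exactly when they lie in a common right $U$-coset, so $g\mapsto \bar g_i g\calO$ descends to a bijection
\[
U_{ij}/U\;\xrightarrow{\ \sim\ }\;\{J\subseteq I_i\mid J\simeq I_j,\ \Nr(J)=\grn\,\Nr(I_i)\},
\]
whose cardinality is $\BV_{ij}(\grn)$ by definition. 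Chaining the equalities gives $a_{ij}=\BV_{ij}(\grn)$, i.e.\ the matrix of $R(f)$ in the basis $\{\chi_i\}$ is exactly $\BV(\grn)$. The only point needing genuine care is that the norm condition $\Nr(g)\wt A=\grn$ cutting out $U(\grn)$ translates precisely into $\Nr(J)=\grn\,\Nr(I_i)$; this relies on the fact, established in Section~\ref{sec:norms-ideals}, that the reduced norm carries locally principal ideals to invertible $\wt A$-modules multiplicatively, and it is really the only substantive verification, the remainder being formal bookkeeping with Hecke operators.
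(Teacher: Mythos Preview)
Your proposal is correct and follows essentially the same approach as the paper: the paper carries out exactly this computation in the paragraphs preceding the theorem, evaluating $R(f)(\chi_j)$ at $\bar g_i$, identifying the resulting integral as $\operatorname{vol}(U_{ij})=\lvert U_{ij}/U\rvert$, and matching $U_{ij}/U$ with the set in Proposition~\ref{brandt_matrix}(a) via $g\mapsto \bar g_i g\calO$. Your added remark about the norm condition is a fair gloss on the only nontrivial verification.
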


\begin{rem}\label{function_field}
  In the function field setting where 
  \begin{itemize}
  \item $F$ is a global function field
  with constant field $\ff_q$,
\item $A$ an $S$-order (whose normalizer
  is the $S$-ring of integers), where $S$ is a nonempty finite set of
  places of $F$,
\item $D$ a definite quaternion $F$-algebra relative to
  $S$, and 
\item $\calO$
  a proper $A$-order in $D$,
\end{itemize}
all results in Sections 3-4 make sense
  and remain valid, possibly except for
  Theorems~\ref{thm:trac-brandt-matr-for1} and 
  \ref{thm:trac-brandt-matr-for2} and
  Corollary~\ref{class_number_formula} in characteristic 2.  
\end{rem}




\section{Mass of Orders}
\label{sec:mass_orders}

\subsection{Mass formula}
\label{sec:mass_formula}

We keep the notations and assumptions of Section~\ref{sec:12}. In
particular, $\{I_1, \ldots, I_h\}$ is a complete set of
representatives for the right ideal classes in $\Cl(\OO)$, and
$\OO_i=\OO_l(I_i)$.   Recall that the \textit{mass} of
$\calO$ is defined by
\begin{equation}
  \label{eq:def_mass}
  \Mass(\calO)=\sum_{i=1}^h \frac{1}{w_i}, \qquad   w_i=[\calO_i^\times:
  A^\times]. 
\end{equation}
The mass of $\OO$ is
independent of the choices of representatives for $\Cl(\OO)$. 

\def\vol{{\rm vol}}  

\begin{lem}\label{vol_fund_domain}
  Let $G:=\wh D^\times/\wh A^\times$, $\Gamma:=D^\times/A^\times$ and
  $U:=\wh \calO^\times/\wh A^\times$. Then $\Gamma$ is a discrete
  cocompact subgroup of $G$, and for the
  counting measure on $\Gamma$ and any Haar measure on $G$, we have
  \begin{equation}
    \label{eq:vol_fund_domain}
    \vol(\Gamma\backslash G)=\vol(U)\cdot \Mass(\calO). 
  \end{equation}
\end{lem}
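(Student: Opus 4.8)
The plan is to reduce the identity to two standard finiteness facts—finiteness of the class number $h=h(\calO)$ and finiteness of $\calO^\times/A^\times$—and then to compute $\vol(\Gamma\backslash G)$ one double coset at a time.

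First I would dispose of the topological assertions. Fix representatives $g_1,\dots,g_h\in\wh D^\times$ for $D^\times\backslash\wh D^\times/\wh\calO^\times$ and write $\bar g_i$ for their images in $G$, so that $G=\coprod_{i=1}^h\Gamma\bar g_i U$. Cocompactness is then immediate: $\Gamma\backslash G$ is the union of the finitely many sets $\Gamma\backslash\Gamma\bar g_i U$, each a continuous image of the compact group $U$. For discreteness I would use that $\wh A^\times$ is central in $\wh D^\times$, hence contained in $\wh\calO^\times$, so that $\wh\calO^\times\wh A^\times=\wh\calO^\times$ and therefore $\Gamma\cap U=\{[\gamma]:\gamma\in D^\times\cap\wh\calO^\times\}=\calO^\times/A^\times$. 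Since $D$ is totally definite this group is finite (the unit group of an order is discrete in the compact group $(D\otimes\rr)^\times$ modulo centre), so $\Gamma\cap U$ is a finite subgroup; shrinking $U$ inside the Hausdorff group $G$ to an open neighbourhood of $1$ avoiding the finitely many nontrivial elements of $\Gamma\cap U$ shows that $\Gamma$ is discrete. Consequently $\pi\colon G\to\Gamma\backslash G$ is a covering map and the Haar measure on $G$ descends to the invariant measure on $\Gamma\backslash G$.

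Next I would compute the volume. As the double cosets are disjoint, $\vol(\Gamma\backslash G)=\sum_{i=1}^h\vol(\Gamma\backslash\Gamma\bar g_i U)$. For fixed $i$ the set $\bar g_i U$ is open with $\vol(\bar g_i U)=\vol(U)$ by left invariance, and the restriction of $\pi$ to $\bar g_i U$ is a surjection onto $\Gamma\backslash\Gamma\bar g_i U$ that is everywhere $|\Gamma_i|$-to-one, where $\Gamma_i:=U\cap\bar g_i^{-1}\Gamma\bar g_i$ is the stabiliser of $\Gamma\bar g_i$ for the right $U$-action (for $\gamma\in\Gamma$ one has $\gamma\bar g_i U=\bar g_i U$ precisely when $\gamma\in\bar g_i\Gamma_i\bar g_i^{-1}$). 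Since $\pi$ is locally measure preserving this gives $\vol(\Gamma\backslash\Gamma\bar g_i U)=\vol(U)/|\Gamma_i|$.

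It remains to identify $|\Gamma_i|$ with $w_i$. Conjugating by $\bar g_i$ and using once more that $\wh A^\times$ is central, together with $\wh{\calO_i}^\times=g_i\wh\calO^\times g_i^{-1}$, gives $\bar g_i\Gamma_i\bar g_i^{-1}=(\wh{\calO_i}^\times/\wh A^\times)\cap\Gamma=\{[\gamma]:\gamma\in\wh{\calO_i}^\times\cap D^\times\}=\calO_i^\times/A^\times$, which has order $w_i=[\calO_i^\times:A^\times]$. Summing yields $\vol(\Gamma\backslash G)=\vol(U)\sum_{i=1}^h w_i^{-1}=\vol(U)\,\Mass(\calO)$. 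I expect the only delicate point to be the bookkeeping around the central factor $\wh A^\times$: because $A$ need not be integrally closed one must carry $\wh A^\times$ through each step, and the argument closes cleanly exactly because the inclusion $\wh A^\times\subseteq\wh\calO^\times$ collapses $\wh\calO^\times\wh A^\times$ to $\wh\calO^\times$ in both the discreteness computation and the stabiliser identification.
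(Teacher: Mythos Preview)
Your proof is correct and follows essentially the same approach as the paper: decompose $G$ into the $h$ double cosets $\Gamma\bar g_i U$, compute the volume of each piece as $\vol(U)/\abs{\Gamma\cap \bar g_i U\bar g_i^{-1}}$, and identify this stabiliser with $\calO_i^\times/A^\times$. The paper's proof is terser—it does not spell out the discreteness and cocompactness arguments you supply, nor the bookkeeping around the central factor $\wh A^\times$—but the volume computation is line-for-line the same.
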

\begin{proof}
 By (\ref{eq:doublecoset}), 
one has $h=|\Gamma\backslash
G/U|$. Write $G=\coprod_{i=1}^h \Gamma g_i U$. Then 
\begin{equation}
  \label{eq:vol_comp}
  \begin{split}
  \vol(\Gamma\backslash G) =\sum_{i=1}^h \vol(\Gamma\backslash \Gamma
  g_i U) 
  =\sum_{i=1}^h \frac{\vol(U)}{\abs{\Gamma\cap g_i U g_i^{-1}}}.  
  \end{split}  
\end{equation}
The statement then follows from
$[\calO_i^\times:A^\times]=\abs{\Gamma\cap g_i U g_i^{-1}}$. 
\end{proof}
\begin{lem}\label{lem:mass-formulas-relative}
Let  $\RZ\subseteq \calO$ be two $\zz$-orders in $D$ with centers
$R$ and $A$, respectively. Then  
\begin{equation}
  \label{eq:rel_formula}
  \Mass(\RZ)=\Mass(\calO) 
   \frac{[\wh \calO^\times: \wh \RZ^\times]}{[A^\times:R^\times]}.  
\end{equation}
\end{lem}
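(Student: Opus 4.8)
The plan is to run the volume computation of Lemma~\ref{vol_fund_domain} for both orders inside a \emph{single} ambient group, taking the smaller center $R$ as the common modulus. First I would record the containments forced by $\RZ\subseteq\calO$: since the centers are $R=\RZ\cap F$ and $A=\calO\cap F$, one has $R\subseteq A$, hence the chains $\wh R^\times\subseteq\wh\RZ^\times\subseteq\wh\calO^\times$ and $R^\times\subseteq A^\times\subseteq\calO^\times$ of open compact (resp. finite-index) subgroups. I would then fix $G:=\wh D^\times/\wh R^\times$, $\Gamma:=D^\times/R^\times$, and a single Haar measure on $G$ together with the counting measure on $\Gamma$; as in Lemma~\ref{vol_fund_domain}, $\Gamma$ is discrete and cocompact in $G$ (using $D^\times\cap\wh R^\times=R^\times$), so that $\vol(\Gamma\backslash G)$ is a well-defined finite number against which both orders will be compared.

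Applying Lemma~\ref{vol_fund_domain} verbatim to $\RZ$ (whose center is exactly $R$) with $U_\RZ:=\wh\RZ^\times/\wh R^\times$ gives
\[
\vol(\Gamma\backslash G)=\vol(U_\RZ)\,\Mass(\RZ).
\]
For $\calO$ I cannot invoke the lemma directly, because its native modulus is $\wh A^\times$ rather than $\wh R^\times$; instead I would repeat the short computation inside the \emph{same} $G$ using the open compact subgroup $U_\calO:=\wh\calO^\times/\wh R^\times$ (legitimate since $\wh R^\times\subseteq\wh\calO^\times$). The double cosets still satisfy $\Gamma\backslash G/U_\calO=D^\times\backslash\wh D^\times/\wh\calO^\times=\Cl(\calO)$, so writing $G=\coprod_{i=1}^{h(\calO)}\Gamma g_iU_\calO$ and following the proof of Lemma~\ref{vol_fund_domain} yields
\[
\vol(\Gamma\backslash G)=\sum_{i=1}^{h(\calO)}\frac{\vol(U_\calO)}{\abs{\Gamma\cap g_iU_\calO g_i^{-1}}}.
\]
The one place needing care is the stabilizer: since $R$ is central and contained in $\calO_i=g_i\calO g_i^{-1}$, a short check gives $\Gamma\cap g_iU_\calO g_i^{-1}=\calO_i^\times/R^\times$, so the denominator is $[\calO_i^\times:R^\times]$ rather than the $w_i=[\calO_i^\times:A^\times]$ of the original mass.

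The final step is bookkeeping of indices, and I expect the crux to be the factorization $[\calO_i^\times:R^\times]=[\calO_i^\times:A^\times]\,[A^\times:R^\times]=w_i\,[A^\times:R^\times]$, valid because $R^\times\subseteq A^\times\subseteq\calO_i^\times$; substituting it pulls the constant $[A^\times:R^\times]^{-1}$ out of the sum and identifies $\sum_i w_i^{-1}$ as $\Mass(\calO)$, giving
\[
\vol(\Gamma\backslash G)=\frac{\vol(U_\calO)}{[A^\times:R^\times]}\,\Mass(\calO).
\]
Equating the two expressions for $\vol(\Gamma\backslash G)$ and using that the ratio of Haar volumes of nested open compact subgroups equals their index, $\vol(U_\calO)/\vol(U_\RZ)=[U_\calO:U_\RZ]=[\wh\calO^\times:\wh\RZ^\times]$ (the $\wh R^\times$ cancels in the quotient groups), produces $\Mass(\RZ)=\Mass(\calO)\,[\wh\calO^\times:\wh\RZ^\times]/[A^\times:R^\times]$, as claimed. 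The main obstacle is conceptual rather than computational: one must resist applying Lemma~\ref{vol_fund_domain} to $\calO$ with its own center $A$, and instead recompute with the common modulus $\wh R^\times$, so that the discrepancy between the two natural masses surfaces precisely as the index $[A^\times:R^\times]$.
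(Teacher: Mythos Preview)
Your argument is correct. It is very close in spirit to the paper's proof, but the packaging differs slightly: the paper sets up \emph{two} ambient groups $G_1=\wh D^\times/\wh A^\times$ and $G_2=\wh D^\times/\wh R^\times$, applies Lemma~\ref{vol_fund_domain} to each order in its native setting, and then compares $\vol(\Gamma_2\backslash G_2)$ with $\vol(\Gamma_1\backslash G_1)$ via the finite cover $G_2\to G_1$ of degree $[\wh A^\times:\wh R^\times]$ (and $\Gamma_2\to\Gamma_1$ of degree $[A^\times:R^\times]$); the auxiliary index $[\wh A^\times:\wh R^\times]$ then appears twice and cancels. You instead work entirely inside the single group $G=\wh D^\times/\wh R^\times$ and redo the stabilizer computation for $\calO$ there, which makes the factor $[A^\times:R^\times]$ emerge directly from $[\calO_i^\times:R^\times]=w_i\,[A^\times:R^\times]$ without any cancellation. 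Both routes are short and equivalent; yours avoids introducing the covering degree that will cancel anyway, while the paper's avoids recomputing Lemma~\ref{vol_fund_domain} for $\calO$ with a non-standard modulus.
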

\begin{proof}
  Let $G_1:=\wh{D}^\times/\wh A^\times$,
  $\Gamma_1:=D^\times/A^\times$, $U_1:=\wh \calO^\times/\wh A^\times$.
  We define $G_2$, $\Gamma_2$ and $U_2$ for the order $\RZ$ similarly.
  The map $G_2\to G_1$ is a finite cover with degree $[\wh A^\times:
  \wh R^\times]$ and $\Gamma_2\to \Gamma_1$ is a finite cover of
  degree $[A^\times:R^\times]$. Therefore, one gets
\[ \vol(\Gamma_2\backslash G_2)=\vol(\Gamma_1\backslash 
G_1)\frac{[\wh A^\times: \wh R^\times]}{[A^\times:R^\times]}. \]
On the other hand, $\vol(U_1)/\vol(U_2)=[\wh \calO^\times:  \wh
 \RZ^\times]/[\wh A^\times: \wh R^\times]$.    The lemma now
 follows from Lemma~\ref{vol_fund_domain}. 
\end{proof}

Let $\calO_{\max}$ be a maximal order in $D$ containing $\calO$. 
The mass formula \cite[Chapter V, Corollary 2.3]{vigneras} states that
\begin{equation}
  \label{eq:mass_formula}
  \Mass(\calO_{\max})=\frac{1}{2^{n-1}} |\zeta_F(-1)| h(F) \prod_{\grp
  | \scrD } 
  (N(\grp)-1),    
\end{equation}
where $\zeta_F(s)$ is the Dedekind zeta-function of $F$, $\scrD\subseteq
O_F$ is the discriminant ideal of $D$ over $F$ and $\grp$ ranges in
the set of prime ideals of $O_F$ that divide $\scrD$. Using
Lemma~\ref{lem:mass-formulas-relative}, one easily derives the
relative mass formula
\begin{equation}
  \label{eq:rel_mass_formula}
  \begin{split}
  \Mass(\calO) 
  & =\Mass(\calO_{\max})\cdot \frac{[\wh \calO_{\max}^\times: \wh
    \calO^\times]}{[O_F^\times:A^\times]} \\
  & =\frac{1}{2^{n-1}} |\zeta_F(-1)| h(F) \prod_{\grp | \scrD }
  (N(\grp)-1)\cdot \frac{[\wh \calO_{\max}^\times: \wh
    \calO^\times]}{[O_F^\times:A^\times]}.  
  \end{split}
\end{equation}

\subsection{Special cases}
\label{sec:special_case}
Let $F=\qq(\sqrt{p})$, where $p$ is a prime number,  and 
$D=D_{\infty_1,\infty_2}$, the totally definite quaternion $F$-algebra
ramified only at the archimedean places $\{\infty_1,\infty_2\}$. 
Let  $\mathbb{O}_1$ be a maximal $O_F$-order in $D$ and
$A=\zz[\sqrt{p}]\subseteq O_F$. 
By (\ref{eq:mass_formula}), the mass of $\mathbb{O}_1$ is
\begin{equation}
  \label{eq:25}
 \Mass(\mathbb{O}_1)=\frac{1}{2}\, \zeta_F(-1)\, h(F).   
\end{equation}

\begin{sect}\label{sec:5.2.1}
  \textbf{Mass of $\bbO_r$, $r=8,16$.}
Assume that $p\equiv 1 \pmod 4$ for the rest of this subsection. In this
case $A\neq O_F$, and $A/2O_F\cong \ff_2$. 
Let $\mathbb{O}_8, \mathbb{O}_{16}\subset \mathbb{O}_1$ be the proper
$A$-orders such that
\begin{gather}
  (\mathbb{O}_8)_2:=\mathbb{O}_8\otimes_{\zz} \zz_2=
\begin{pmatrix}
  A_2 & 2 O_{F_2} \\  O_{F_2} & O_{F_2}\\
\end{pmatrix},\qquad (\mathbb{O}_{16})_2=\Mat_2(A_2),\label{eq:31} \\  
(\mathbb{O}_r)_\ell=(\mathbb{O}_1)_\ell\qquad \forall\,\text{prime } \ell\neq 2, \quad r\in \{8,
16\}. \label{eq:32}
\end{gather}
The order $\mathbb{O}_r\subset \mathbb{O}_1$ is of 
index $r$.

We claim that $\Nr_A(\mathbb{O}_8)=O_F\neq A$. It is enough to show that
$\Nr_{A_\ell}((\mathbb{O}_{8})_\ell)=(O_F)_\ell$ for all primes $\ell$, which
follows from (\ref{eq:31}) for $\ell=2$, and  (\ref{eq:32}) for the
rest of the primes. 

Put $\varpi:=[O_F^\times: A^\times]$. 
By \cite[Section 4.2]{xue-yang-yu:num_inv}, we have $\varpi\in
\{1,3\}$, and $\varpi=1$ if $p\equiv 1 \pmod 8$.  By formula
(\ref{eq:rel_mass_formula}), one has
\begin{equation}
  \label{eq:5.10}
  \Mass(\mathbb{O}_r)=\Mass(\mathbb{O}_1)
\frac{[(\mathbb{O}_1/2\mathbb{O}_1)^\times:
  (\mathbb{O}_r/2\mathbb{O}_1)^\times]}{\varpi},\quad r=8,16.
\end{equation}
The group
$(\mathbb{O}_{16}/2\mathbb{O}_1)^\times\simeq \GL_2(\ff_2)$ and hence 
$|(\mathbb{O}_{16}/2\mathbb{O}_1)^\times|=6$. 

Suppose that  $p\equiv 1 \pmod 8$. 
The group 
$(\mathbb{O}_1/2\mathbb{O}_1)^\times\simeq \GL_2(\ff_2)\times
\GL_2(\ff_2)$ is of order $36$. 
By (\ref{eq:5.10}) we have
$\Mass(\mathbb{O}_{16})=6\, \Mass(\mathbb{O}_1)$. For the order
$\mathbb{O}_8$ one has  
\[ \mathbb{O}_8/2\mathbb{O}_1\simeq
\begin{pmatrix}
  \ff_2 & 0 \\ \ff_2\times \ff_2 & \ff_2\times \ff_2
\end{pmatrix}, \]
and hence $|(\mathbb{O}_8/2\mathbb{O}_1)^\times|=4$. Therefore by
(\ref{eq:5.10}) we
have $\Mass(\mathbb{O}_8)=9\, \Mass(\mathbb{O}_1)$.

Suppose now that $p\equiv 5 \pmod 8$. 
The group 
$(\mathbb{O}_1/2\mathbb{O}_1)^\times\simeq \GL_2(\ff_4)$ is of order $180$.
Thus,
$\Mass(\mathbb{O}_{16})=30/\varpi \cdot \Mass(\mathbb{O}_1)$. Since
\[ \mathbb{O}_8/2\mathbb{O}_1\simeq
\begin{pmatrix}
  \ff_2 & 0 \\ \ff_4 & \ff_4
\end{pmatrix},\]
we have  $|(\mathbb{O}_8/2\mathbb{O}_1)^\times|=12$. Thus,
$\Mass(\mathbb{O}_8)=15/\varpi \cdot \Mass(\mathbb{O}_1)$ by
(\ref{eq:5.10}). 

In summary, 
\begin{equation}
  \label{eq:mass_ord}
  \begin{split}
  & \Mass(\mathbb{O}_8)=
  \begin{cases}
    9/2\cdot \zeta_F(-1)\,h(F) & \text{for $p\equiv 1\pmod 8$};\\
  (15/2\varpi) \cdot \zeta_F(-1)\,h(F) & \text{for $p\equiv 5\pmod 8$};\\
  \end{cases} \\
  & \Mass(\mathbb{O}_{16})=
  \begin{cases}
    3\, \zeta_F(-1)\,h(F) & \text{for $p\equiv 1\pmod 8$};\\
  (15/\varpi) \cdot \zeta_F(-1)\,h(F) & \text{for $p\equiv 5\pmod 8$}.\\
  \end{cases}  
  \end{split}
\end{equation}
\end{sect}

\section{Supersingular abelian surfaces}
\label{sec:ss_ab_surf}

\subsection{Isomorphism classes}
\label{sec:isom_class}
Let $\pi=\sqrt{p}$ and $X_\pi$ an abelian variety over $\ff_p$
corresponding to the Weil number $\pi$. Let $\Isog(X_\pi)$ denote the
set of $\ff_p$-isomorphism classes of abelian varieties in the isogeny
class of $X_\pi$ over $\ff_p$. It is known that the endomorphism
algebra $D$ of $X_\pi$ over $\ff_p$ is isomorphic to the totally
definite quaternion algebra $D=D_{\infty_1.\infty_2}$ over
$F=\qq(\sqrt{p})$ defined in Section~\ref{sec:special_case}.  We also
recall the orders $\oo_1,\oo_8, \oo_{16}$ introduced there. The
endomorphism ring of each member $X$ in $\Isog(X_\pi)$ may be regarded
as an order in $D$, uniquely determined up to a inner automorphism of
$D$. Let $\mathfrak O_r$ denote the genus consisting of orders in $D$
which are locally isomorphic to $\oo_r$ at every prime $\ell$. If
$p\equiv 1\pmod{4}$, then $A=\zz[\sqrt{p}]\subset O_F$ is a
suborder of index $2$ in $O_F$.

We will need the following result, which is a special case of
\cite[Theorem 2.2]{yu:smf}. 
\begin{prop}\label{6.1}
  Let $X_0$ be an abelian variety over a finite field $\ff_q$ and
  $\RR:=\End_{\ff_q}(X_0)$ the endomorphism ring of $X_0$. Then there is
  a natural bijection from the set $\Cl(\RR)$ to the set of
  $\ff_q$-isomorphism classes of abelian varieties $X$ satisfying the
  following three conditions
  \begin{itemize}
  \item [(a)] $X$ is isogenous to $X_0$ over $\ff_q$,
  \item [(b)] the Tate module $T_\ell(X)$ is
  isomorphic to $T_\ell(X_0)$ as $\Gal(\bar{\ff}_q/\ff_q)$-modules
  for all primes $\ell\neq p$,
  \item [(c)] the Dieudonn\'e module $M(X)$ of $X$ is isomorphic to
    $M(X_0)$.   
  \end{itemize}
\end{prop}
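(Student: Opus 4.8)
The plan is to exhibit the bijection concretely by the Serre tensor construction, whose inverse will be the functor $\Hom_{\ff_q}(X_0,-)$, and then to read off the cardinality from the adelic description \eqref{eq:doublecoset} applied to $\calO=\RR$. Write $D=\End^0_{\ff_q}(X_0)$, so that $\RR\subset D$ is an order and $X_0$ is naturally a left $\RR$-module object in the isogeny category. Given a locally principal right $\RR$-ideal $I\subset D$, I would form $X_I:=I\otimes_\RR X_0$; since $I$ is a full lattice, $X_I$ is isogenous to $X_0$ and (a) holds. Local principality means $I_\ell\simeq\RR_\ell$ as right $\RR_\ell$-modules for every $\ell$, whence $T_\ell(X_I)\simeq I_\ell\otimes_{\RR_\ell}T_\ell(X_0)\simeq T_\ell(X_0)$ as $\Gal(\bar{\ff}_q/\ff_q)$-modules for $\ell\neq p$, giving (b), and $M(X_I)\simeq M(X_0)$ at $p$, giving (c). Isomorphic ideals give isomorphic $X_I$ by functoriality, so $I\mapsto X_I$ descends to a well-defined map out of $\Cl(\RR)$.

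For the inverse, to an $X$ satisfying (a)--(c) I would attach $I_X:=\Hom_{\ff_q}(X_0,X)$, a right $\RR$-lattice inside $\Hom^0(X_0,X)\cong D$ (where the latter identification follows after fixing a quasi-isogeny). The crucial point is that (b) and (c) force $I_X$ to be locally principal: by Tate's theorem $I_X\otimes\zz_\ell\cong\Hom_{\Gal}(T_\ell X_0,T_\ell X)$, and a prescribed isomorphism $T_\ell X\simeq T_\ell X_0$ carries this, by precomposition, isomorphically onto $\End_{\Gal}(T_\ell X_0)=\RR_\ell$, a free right $\RR_\ell$-module of rank one; Dieudonn\'e theory supplies the analogue at $p$. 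One then checks that the evaluation map $I_X\otimes_\RR X_0\to X$ is an isomorphism, so that $X_{I_X}\cong X$ and the two constructions are mutually inverse. Combined with \eqref{eq:doublecoset}, which identifies $\Cl(\RR)$ with $D^\times\backslash\wh D^\times/\wh\RR^\times$, this yields the asserted bijection, and its naturality is immediate from the functoriality of $\Hom_{\ff_q}(X_0,-)$.

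The main obstacle is the essential surjectivity, namely that the evaluation map $I_X\otimes_\RR X_0\to X$ is an isomorphism and not merely an isogeny; equivalently, that every $X$ cut out by (b) and (c) genuinely arises as a Serre tensor. This amounts to gluing the independently chosen $\ell$-adic integral structures (for all $\ell\neq p$) with the $p$-adic Dieudonn\'e structure into a single abelian variety carrying an $\RR$-action, which is exactly where the finite-field hypothesis and the full faithfulness of the $\ell$-adic and Dieudonn\'e realizations enter. This effectivity, together with the precise equivalence between local principality of $I_X$ and conditions (b)--(c), is the substance of \cite[Theorem 2.2]{yu:smf}, which I would invoke; the remaining verifications above are then formal.
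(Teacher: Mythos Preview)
Your proposal is correct and is in fact essentially the same approach as the paper: the paper gives no proof at all for this proposition, simply stating that it is a special case of \cite[Theorem~2.2]{yu:smf}, which is exactly the reference you end up invoking for the substantive step. Your additional sketch of the bijection via Serre tensor and $\Hom_{\ff_q}(X_0,-)$, and the use of Tate's theorem to verify local principality, is accurate and adds useful expository content beyond what the paper provides.
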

\begin{thm}\label{explicit_description} \ 

  {\rm (a)} Suppose that $p\not \equiv 1 \pmod 4$. The endomorphism ring of
  any member $X$ in $\Isog(X_\pi)$ is a maximal order in $D$. Moreover, 
  there is a bijection between the set $\Isog(X_\pi)$ with the set 
  $\Cl(\oo_1)$ of ideal classes. 

  {\rm (b)} Suppose that $p\equiv 1 \pmod 4$. The endomorphism ring
  $\End(X)$ of
  any member $X$ in $\Isog(X_\pi)$ belongs to $\mathfrak O_r$ for some
  $r=1, 8, 16$. Moreover, for each $r\in \{1,8,16\}$  
  the set of members $X$ in $\Isog(X_\pi)$
  with $\End(X)\in \mathfrak O_r$ is in bijection with the set
  $\Cl(\oo_r)$ of ideal classes. In particular,   
  there is a bijection $\Isog(X_\pi)\simeq \coprod_{r=1,8,16} \Cl(\oo_r)$.
\end{thm}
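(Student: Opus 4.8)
The plan is to deduce everything from Proposition~\ref{6.1} by partitioning $\Isog(X_\pi)$ according to the local invariants appearing in its conditions (b) and (c), and then matching each such local invariant with a genus of the endomorphism ring. The starting observation is that $\pi=\sqrt p$ acts as Frobenius and $V=p\pi^{-1}=\sqrt p$ as Verschiebung, so $R_0=\zz[\pi,p\pi^{-1}]=\zz[\sqrt p]=A$ is contained in $\End(X)$ for every member $X$; thus $\End(X)$ is a $\zz$-order in $D$ whose center is an order between $A$ and $O_F$. I would use Waterhouse's description \cite[Theorem 6.2]{waterhouse:thesis} of the endomorphism rings in this isogeny class, the point being that $\End(X)_\ell$ is recovered from the Galois module $T_\ell(X)$ for $\ell\neq p$ and from the Dieudonn\'e module $M(X)$ for $\ell=p$, and conversely that these local modules are determined by $\End(X)_\ell$. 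Granting this, ``$X$ and $X'$ share the invariants (b)--(c)'' becomes equivalent to ``$\End(X)$ and $\End(X')$ lie in the same genus'', which is exactly what turns Proposition~\ref{6.1} into a statement about genera.

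For part (a) I would first note that $p\not\equiv 1\pmod 4$ forces $A=\zz[\sqrt p]=O_F$, so every $\End(X)$ is an $O_F$-order. At each $\ell\neq p$ the algebra $D_\ell$ is split and $A_\ell=O_{F,\ell}$ is maximal, which pins $\End(X)_\ell$ to a maximal order; at $p$ the supersingular Dieudonn\'e module likewise forces the maximal local type. Hence $\End(X)$ is maximal for every $X$, only the single genus $\mathfrak O_1$ occurs, and all members carry the same local invariants. Taking $X_0=X_\pi$, conditions (b) and (c) then hold for every member of the isogeny class, and Proposition~\ref{6.1} gives directly the bijection $\Isog(X_\pi)\simeq\Cl(\oo_1)$.

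For part (b), where $p\equiv 1\pmod 4$ and $A_\ell=O_{F,\ell}$ for all $\ell\neq 2$ but $A_2\subsetneq O_{F,2}$, I would argue as in (a) that $\End(X)_\ell$ is forced maximal at every $\ell\neq 2,p$ and that the supersingular structure fixes a single local type at $p$; so conditions (b) at $\ell\neq 2$ and (c) hold for all members, and the only freedom sits at $\ell=2$. Using the local computations of Section~\ref{sec:special_case} together with \cite[Theorem 6.2]{waterhouse:thesis}, the proper $A_2$-orders occurring as $\End(X)_2$ are exactly $(\oo_1)_2$, $(\oo_8)_2$, $(\oo_{16})_2$ from \eqref{eq:31}--\eqref{eq:32}, in bijection with the three isomorphism classes of the lattice $T_2(X)$; since the order agrees with $(\oo_1)_\ell$ at every other prime, these three cases are precisely the genera $\mathfrak O_1,\mathfrak O_8,\mathfrak O_{16}$. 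I would then partition $\Isog(X_\pi)$ by the isomorphism class of $T_2(X)$, observe that each block is the set of members satisfying (a)--(c) relative to a chosen representative $X_0$ with $\End(X_0)\in\mathfrak O_r$, and apply Proposition~\ref{6.1} to identify that block with $\Cl(\End(X_0))$, which is in bijection with $\Cl(\oo_r)$ because the two orders lie in the same genus. Summing over $r\in\{1,8,16\}$ gives $\Isog(X_\pi)\simeq\coprod_{r=1,8,16}\Cl(\oo_r)$.

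The hard part will be the local classification of endomorphism rings underlying both cases, and in particular the prime $\ell=2$ when $p\equiv 1\pmod 4$: one must show that exactly the three orders of \eqref{eq:31}--\eqref{eq:32} arise as $\End(X)_2$ and that the isomorphism class of $T_2(X)$ separates them completely. Equally delicate is making precise the dictionary ``local module $\leftrightarrow$ local order'' --- that $T_\ell(X)$ (resp.\ $M(X)$) both determines and is determined by $\End(X)_\ell$, and that the supersingular Dieudonn\'e module at $p$ admits a unique local type, so that condition (c) contributes no extra partition. Once these structural facts are in place, the passage to the counting statement is a formal application of Proposition~\ref{6.1}.
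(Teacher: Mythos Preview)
Your overall strategy---partition $\Isog(X_\pi)$ by the local invariants in Proposition~\ref{6.1}, show those invariants are rigid away from $\ell=2$, classify the remaining possibilities at $\ell=2$, and apply Proposition~\ref{6.1} blockwise---is exactly the paper's approach. The paper likewise cites Waterhouse for part (a) and argues part (b) by reducing to the classification of Tate modules and Dieudonn\'e modules.

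There is, however, a genuine gap at the step you yourself flag as the hard part. Neither Section~\ref{sec:special_case} nor \cite[Theorem~6.2]{waterhouse:thesis} classifies the possible $\End(X)_2$ or $T_2(X)$: Section~\ref{sec:special_case} merely \emph{defines} $(\oo_8)_2$ and $(\oo_{16})_2$, and Waterhouse's result only says the endomorphism ring need not be maximal. The paper's actual argument runs in the other direction from yours: it first observes that $T_\ell(X)$ is nothing but an $A_\ell$-lattice in $F_\ell^2$ (since Frobenius is $\sqrt{p}$) and then classifies such lattices. At $\ell=2$ the key input is that $2O_{F_2}\subset A_2\subset O_{F_2}$ makes $A_2$ a \emph{Bass order}, so by \cite{Bass-1962} every $A_2$-lattice in $F_2^2$ is a direct sum of ideals, yielding exactly the three types $O_{F_2}^2$, $A_2\oplus O_{F_2}$, $A_2^2$. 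One then \emph{computes} their $A_2$-endomorphism rings to be $(\oo_1)_2$, $(\oo_8)_2$, $(\oo_{16})_2$. This ordering---classify the module, then read off the order---is both cleaner and what actually makes the argument go through; your proposed direction (classify the orders first, then match them to Tate modules) lacks a mechanism for producing the list of orders.

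A smaller point: for the Dieudonn\'e module you can be explicit rather than invoke ``supersingular structure.'' Over $\ff_p$ the Dieudonn\'e ring is commutative and $F$ acts as $\sqrt{p}$, so $M(X)$ is just an $A_p$-lattice in $F_p^2$; since $p$ ramifies in $F$, $A_p=\zz_p[\sqrt{p}]=O_{F_p}$ is already maximal, hence there is a unique isomorphism class and its endomorphism ring is maximal. This is the concrete reason condition~(c) contributes no partition.
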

\begin{proof}
  Part (a) has been proven in \cite[Theorem
  6.2]{waterhouse:thesis}. We prove part (b) where 
  $p\equiv 1 \pmod 4$. 
  By Proposition~\ref{6.1}, one is reduced to 
  classify the Tate modules and \dieu modules of members $X$ in
  $\Isog(X_\pi)$. Since the ground field is $\ff_p$, the \dieu module
  $M(X)$ of $X$ is simply an $A_p$-module in $F_p^2$. As $A_p$ is
  the maximal order in $F_p$, there is
  only one such isomorphism class and its endomorphism ring is a maximal
  order in $\Mat_2(F_p)$. The Tate module $T_\ell(X)$ of $X$ is simply an
  $A_\ell$-module. Therefore, when $\ell\neq 2$, there is only one such
  isomorphism class and its endomorphism ring is again a maximal order in
  $\Mat_2(F_\ell)$. Now we consider the case where $\ell=2$. Since 
  $2 O_{F_2}\subset A_2 \subset
  O_{F_2}$, the order $A_2$ is Bass and hence the classification of 
  $A_2$-modules is known; see \cite{Bass-1962}. It follows that  
  the Tate module $T_2(X)$ of $X$ is
  isomorphic to one of the following three $A_2$-lattices in $F_2^2$\,:
  \begin{equation}
    \label{eq:63}
 L_1=O_{F_2}^2, \quad L_2=A_2\oplus O_{F_2}, \quad L_4=A_2^2,     
  \end{equation}
(also see \cite[Corollary 5.2]{yu:sp_prime} for a direct classification).
One easily computes that $\End_{A_2}(L_1)=(\oo_1)_2$,  
$\End_{A_2}(L_2)=(\oo_8)_2$ and $\End_{A_2}(L_4)=(\oo_{16})_2$. 
If we let $X_1, X_8, X_{16}$ be members in $\Isog(X_\pi)$ representing
these three classes respectively and let $\RR_r:=\End(X_r)$, then
each $\RR_r\in \mathfrak O_r$ and the set of members $X$ in
$\Isog(X_\pi)$ defined as in
Proposition~\ref{6.1} is isomorphic to $\Cl(\RR_r)\simeq \Cl(\oo_r)$. This
proves part (b).  
\end{proof}


\subsection{Computation of class numbers}
\label{sec:explicit_formula}

In this subsection, we give explicit class number formulas for the
orders $\mathbb{O}_1$, $\mathbb{O}_8$ and $\mathbb{O}_{16}$ arising
from the study of supersingular abelian surfaces in the isogeny class
corresponding to $\pi=\sqrt{p}$. Recall that $\mathbb{O}_8$ and
$\mathbb{O}_{16}$ are necessary for consideration only when $p\equiv 1
\pmod{4}$. Let $Z(\mathbb{O}_r)$ be the center of $\mathbb{O}_r$. We
have $Z(\mathbb{O}_1)=O_F$, and $Z(\mathbb{O}_r)= \zz[\sqrt{p}]\neq
O_F$ for $r=8, 16$ when $p\equiv 1 \pmod{4}$. For the rest of this
subsection we write $A$ exclusively for the order $\zz[\sqrt{p}]$ when
$p\equiv 1 \pmod{4}$.  
Recall (Section~\ref{sec:5.2.1}) that 
$\varpi=[O_F^\times:A^\times]\in \{1, 3\}$, and $\varpi=1$ if $p\equiv
1 \pmod{8}$.

By the class number formula (\ref{eq:18}), 
\[h(\mathbb{O}_r)=\Mass(\mathbb{O}_r) +\Ell(\mathbb{O}_r) \qquad \text{ for
} r=1, 8, 16. \] The
mass part $\Mass(\mathbb{O}_r)$ has already been calculated in
Section~\ref{sec:special_case}. So we focus on the elliptic part
\[\Ell(\mathbb{O}_r)=\frac{1}{2} \sum_{B}(2-\delta(B))
h(B)(1-w(B)^{-1})\prod_\ell m(B_\ell, (\mathbb{O}_r)_\ell,
(\mathbb{O}_r)_\ell^\times), \] 
where $B$ runs through all the
(non-isomorphic) quadratic proper $Z(\mathbb{O}_r)$-orders with
\begin{equation}
  \label{eq:40}
 w(B)=[B^\times:Z(\mathbb{O}_r)^\times]>1,
\end{equation}
and $\delta(B)$ is given by (\ref{eq:19}), i.e. it is 1 if $B$ is
closed under the complex conjugation, and $0$ otherwise.

The detailed classification of all the orders $B$ will be given in the
subsequent sections.  We only summarize the results below. For this
purpose some more notations need to be introduced.

\begin{sect}\textbf{Notations of fields\footnote{If
    we need the 2-adic completion of a number field $K$, we will 
    write $K\otimes_\qq\qq_2$ instead of $K_2$ for the rest of the
    paper. Similarly for 3-adic completions.} and orders}.
  Let $K_j=\qq(\sqrt{p}, \sqrt{-j})$ with $j\in \{1,2,3\}$. 
One can show that
\begin{itemize}
\item for $p>5$, all quadratic $O_F$-orders $B$ with $[B^\times:
  O_F^\times]>1$ lie in $K_j$ for some $j\in \{1, 2,3\}$;
\item for $p\equiv 1 \pmod{4}$, all quadratic proper $A$-orders $B$
  with $[B^\times: A^\times]>1$ lie in either $K_1$ or $K_3$;
\end{itemize}
see \cite{xue-yang-yu:num_inv} for more details. 

We adopt the convention that $B_{j,k}$ is an order in $K_j$ with index
$k$ in $O_{K_j}$. The non-maximal suborders of $O_{K_j}$ that we will
consider are:
\begin{gather*}
  B_{1,2}:=\zz+\zz\sqrt{p}+\zz\sqrt{-1}+\zz(1+\sqrt{-1})(1+\sqrt{p})/2,
  \qquad B_{1,4}:=\zz[\sqrt{p},\sqrt{-1}],\\
  B_{3,4}:=\zz[\sqrt{p}, \zeta_6] \qquad \text{ if } p\equiv 1
  \pmod{4}; \\
  B_{3,2}:=A[\epsilon\zeta_6] \qquad \text{ if } p\equiv 5
  \pmod{8} \quad \text{and} \quad \varpi=3. 
\end{gather*}
Here $B_{3,2}$ is the suborder of $O_{K_3}$ generated by
$\epsilon\zeta_6$ over $A$, where $\epsilon\in O_F^\times$ is the
fundamental unit of $F$. With the exception of $B_{3,2}$, all the
other orders above are closed under the complex conjugation. 

Given a number field $K$, the class number of an arbitrary order
$B\subseteq O_K$ of conductor $\f$ can be computed by the following
formula \cite[Theorem I.12.12]{MR1697859}
  \begin{equation}
    \label{xeq:36}
    h(B)=\frac{h(O_K) [{(O_K/\f)}^\times:
      {(B/\f)}^\times]}{[O_K^\times :B^\times]}\,. 
  \end{equation}
\end{sect}


\begin{lem}\label{lem:bass-order}
  Assume that $p\equiv 1 \pmod{4}$. If $B\in \{B_{1,2}, B_{1,4},
  B_{3,4}, B_{3,2}\}$, then $B$ is a Bass order.
\end{lem}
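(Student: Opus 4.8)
The plan is to reduce the global assertion to a purely local one at the prime $2$, and then to treat the $2$-adic completions explicitly, invoking one of the standard characterizations of Bass orders from \cite{Bass-1962}. First I would use the local criterion recalled in the text, namely that $B$ is Bass if and only if $B_\ell:=B\otimes_\zz\zz_\ell$ is Bass for every prime $\ell$, together with the fact that maximal orders are Bass. Each of the four orders has index a power of $2$ in the maximal order $O_{K_j}$ (the indices of $B_{1,2},B_{1,4},B_{3,4},B_{3,2}$ are $2,4,4,2$ respectively), so for every $\ell\neq 2$ one has $B_\ell=(O_{K_j})_\ell$, which is maximal and hence Bass. This leaves only the prime $\ell=2$.

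Next I would write down $B_2$ and its normalization $\wt B_2=(O_{K_j})\otimes_\zz\zz_2$ explicitly, separating the subcases $p\equiv 1\pmod 8$ and $p\equiv 5\pmod 8$, since these govern the splitting of $2$ in $F=\qq(\sqrt{p})$ and hence the structure of $K_j\otimes_\qq\qq_2$. For $K_3=\qq(\sqrt{p},\sqrt{-3})$ the prime $2$ is unramified, and in both subcases $\wt B_2$ is a product $W\times W$ of two copies of the unramified quadratic extension $W$ of $\zz_2$; for $K_1=\qq(\sqrt{p},\sqrt{-1})$ the prime $2$ ramifies in $\qq(\sqrt{-1})$, so $\wt B_2$ involves the (wildly) ramified ring $\zz_2[\sqrt{-1}]$. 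In each case I would identify $B_2$ as an explicit $\zz_2$-sublattice of $\wt B_2$ and compute the conductor $\mathfrak{c}=(B_2:\wt B_2)$.

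I would then apply the convenient form of Bass's criterion: for a reduced one-dimensional local order, being Bass is equivalent to the normalization quotient $\wt B_2/B_2$ being a \emph{cyclic} $B_2$-module (since then every over-order $B_2\subseteq B'\subseteq\wt B_2$ inherits a cyclic quotient $\wt B_2/B'$ and is therefore Gorenstein, which is exactly the defining property of a Bass order). For the two index-$2$ orders $B_{1,2}$ and $B_{3,2}$ this is immediate, because $\wt B_2/B_2$ has length one and is trivially cyclic; in particular the fact that $B_{3,2}=A[\epsilon\zeta_6]$ is a priori only monogenic over $A=\zz[\sqrt{p}]$ rather than over $\zz$ causes no difficulty, since its index in $O_{K_3}$ is $2$. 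For the index-$4$ orders $B_{1,4}$ and $B_{3,4}$ the quotient $\wt B_2/B_2$ has length two, and the point is to check that it is cyclic rather than needing two generators. Concretely, for $B_{1,4}$ with $p\equiv 1\pmod 8$ one finds $B_2=\{(u,v)\in\zz_2[\sqrt{-1}]^2 : u\equiv v \bmod 2\}$, with $\wt B_2/B_2\cong \zz_2[\sqrt{-1}]/2$ free of rank one over $B_2/\mathfrak{c}$; in the remaining cases one verifies similarly that the complex conjugation (the action of $\sqrt{-1}$ or $\zeta_6$) acts nontrivially on the length-two quotient, so that a single element generates it.

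The main obstacle is precisely this index-$4$ analysis at $2$, and especially the ramified field $K_1$: one must pin down the local ring $\zz_2[\sqrt{-1}]$, the exact lattice $B_{1,4}\otimes\zz_2$ inside $\wt B_2$, and the conductor, and then confirm that $\wt B_2/B_2$ is genuinely cyclic and not isomorphic to a direct sum of two copies of a residue field — the latter would be the non-Bass situation. This is where the nontriviality of the induced action of $\sqrt{-1}$ (respectively $\zeta_6$), together with the earlier relations $2O_{F_2}\subseteq A_2\subseteq O_{F_2}$, must be used carefully; once cyclicity is established in each subcase, the Bass property follows from the criterion above.
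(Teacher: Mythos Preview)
Your approach is correct and rests on the same key criterion the paper uses: an order $B$ in a number field is Bass if and only if $O_K/B$ is a cyclic $B$-module (the paper cites this as a theorem of Borevich--Faddeev, equivalent to the formulation you invoke from \cite{Bass-1962}). The index-$2$ cases $B_{1,2}$ and $B_{3,2}$ are handled identically in both arguments. The difference lies in the index-$4$ cases: you first localize at $2$ and then split into subcases according to $p\bmod 8$, carrying out an explicit analysis of $B_2\subset\wt B_2$ in each. The paper instead works globally and dispatches both cases in one line each: since $O_{K_1}=\zz[\sqrt{-1},(1+\sqrt p)/2]$ and $\sqrt{-1}\in B_{1,4}$, the single element $(1+\sqrt p)/2$ visibly generates $O_{K_1}/B_{1,4}$ as a $B_{1,4}$-module; and since $2$ is inert in $\zz[\zeta_6]\subset B_{3,4}$, the quotient $O_{K_3}/B_{3,4}$ is an $\ff_4$-vector space of dimension one, hence cyclic. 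Your localization and case split are not wrong, but they are unnecessary---the global cyclicity check is immediate once one observes that the relevant ``extra'' generator of $O_{K_j}$ over $B$ is $(1+\sqrt p)/2$ and that the action of $\sqrt{-1}$ (resp.\ $\zeta_6$) already lives in $B$.
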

\begin{proof}
  By a theorem of Borevich and Faddeev \cite{MR0205980}
  (cf. Curtis-Reiner \cite[Section 37, p.~789]{MR1038525}), $B$ is
  Bass if and only if the $B$-module $O_K/B$ is generated by one
  element. In particular, if $B$ is of prime index in $O_K$ then $B$
  is Bass. This shows that $B_{1,2}$ and $B_{3,2}$ are Bass
  orders. Since $O_{K_1}=\zz[\sqrt{-1},(1+\sqrt{p})/2]$, the quotient 
 $O_{K_1}/B_{1,4}$ is generated by  $(1+\sqrt{p})/2$ as a
  $B_{1,4}$-module. Hence $B_{1,4}$ is a Bass
  order. Since $2$ is inert in $\zz[\zeta_6]$, one has
  $O_{K_3}/B_{3,4}\simeq \ff_4$ as $\zz[\zeta_6]/(2)\simeq
  \ff_4$-modules. This proves that $B_{3,4}$ is also a Bass order.
\end{proof}

\begin{sect}\label{subsec:classno-max-order-general-p}
\textbf{Class number formula for $\mathbb{O}_1$ when $p>5$.}
  Since $\mathbb{O}_1$ is a
  maximal order and $D_{\infty_1, \infty_2}$ splits at all the finite
  places, we have $m(B_\ell,(\mathbb{O}_1)_\ell,
  (\mathbb{O}_1)_\ell^\times)=1$ for all $\ell$ (see \cite[p.~94]{vigneras}
  or Section~\ref{sec:15}). It follows that 
  \begin{equation}
    \label{eq:41}
\Ell(\mathbb{O}_1)=\frac{1}{2}\sum_{w(B)>1}h(B)(1-w(B)^{-1}),     
  \end{equation}
where  $w(B)=[B^\times: O_F^\times]$, and the summation is over all
isomorphism classes of quadratic $O_F$-orders $B$ with $w(B)>1$. 

When $p\equiv 1 \pmod{4}$
and $p>5$, the only orders with nonzero 
contributions to the elliptic
part $\Ell(\mathbb{O}_1)$ are $O_{K_1}$ and $O_{K_3}$, with
$w(O_{K_1})=2$ and $w(O_{K_3})=3$ respectively.  We have
\begin{equation}
  \label{eq:37}
  h(\mathbb{O}_1)= \frac{1}{2}h(F)\zeta_F(-1)+h(K_1)/4+h(K_3)/3,
\quad   \text{if}\  p\equiv 1 \pmod{4},\  p>5.
\end{equation}
When  $p \equiv 3 \pmod{4}$ and $p\geq 7$, we compute the following
numerical invariants
of all orders $B$ in some field $K_j$ with
$w(B)>1$: \\



\renewcommand{\arraystretch}{1.5}
\noindent\begin{tabular}{|>{$}c<{$}||*{5}{>{$}c<{$}|}}
\hline
  p\equiv 3 \pmod{4}
  & O_{K_1} & B_{1,2} & B_{1,4} & O_{K_2} & O_{K_3}\\
\hline
h(B) & h(K_1) & \left(2-\left(\frac{2}{p}\right)\right)h(K_1)
& \left(2-\left(\frac{2}{p}\right)\right)h(K_1) & h(K_2) & h(K_3)\\ 
w(B) & 4 & 4 & 2 & 2 & 3\\
\hline
\end{tabular}

\smallskip

Therefore, we have
\begin{equation}
  \label{eq:36}
  h(\mathbb{O}_1)= \frac{1}{2}h(F)\zeta_F(-1) +  \left(\frac{3}{8}+\frac{5}{8}\left(2-\left(\frac{2}{p}\right)\right)\right)h(K_1)+\frac{1}{4}h(K_2)+\frac{1}{3}h(K_3), 
\end{equation}
if  $p \equiv 3 \pmod{4}$ and $p\geq 7$. 
\end{sect}

\begin{sect}\textbf{Class number formula for $\mathbb{O}_8$ and
    $\mathbb{O}_{16}$ when $p\equiv 1\pmod{4}$.}
  Since $(\mathbb{O}_r)_\ell$ is maximal for all $\ell\neq 2$ and $r\in \{8,
  16\}$,  we have
  \begin{equation}\label{eq:42}
\Ell(\mathbb{O}_r)=\frac{1}{2}\sum_{w(B)>1}
  (2-\delta(B))h(B)(1-w(B)^{-1})m(B_2,(\mathbb{O}_r)_2, 
  (\mathbb{O}_r)_2^\times),
  \end{equation}
  where $w(B)=[B^\times: A^\times]$ and the summation is over all
  isomorphism classes of quadratic proper $A$-orders $B$ with
  $w(B)>1$. For simplicity, we will write
  $m_{2,r}(B):=m(B_2,(\mathbb{O}_r)_2, (\mathbb{O}_r)_2^\times)$ for
  $r=8,16$, where $(\oo_r)_2=\oo_r\otimes_\zz\zz_2$ and
  $B_2=B\otimes_\zz \zz_2$.  The numerical invariants of all proper
  $A$-orders $B$ with
  $w(B)>1$ are given by the following table: \\



\renewcommand{\arraystretch}{1.5}
\noindent\begin{tabular}{|>{$}c<{$}||*{4}{>{$}c<{$}|}}
\hline
  p\equiv 1 \pmod{4}
  &  B_{1,2} & B_{1,4} & B_{3,4} & B_{3,2}\\
\hline
 h(B) &  \frac{1}{\varpi}\left(2-\left(\frac{2}{p}\right)\right)h(K_1)
 & \frac{2}{\varpi}\left(2-\left(\frac{2}{p}\right)\right)h(K_1) &
 3h(K_3)/\varpi & h(K_3)\\
w(B) &  2 & 2 & 3 & 3\\
m_{2,8}(B)& 1 & 0 & 0 & 1 \\
m_{2,16}(B)& 0  &1  & 1& 0 \\
\delta(B)  & 1 & 1 & 1 & 0\\
\hline
\end{tabular} \\

 \smallskip

 Here $B_{3,2}$ is a proper $A$-order only if $p\equiv 5\pmod{8}$ and
 $\varpi=3$, in which case $\delta(B_{3,2})=0$. The numbers of
 conjugacy classes of 2-adic optimal embeddings $m_{2,r}(B)$ will be
 calculated in the next subsection.

For the explicit class number formulas of $\mathbb{O}_8$ and
$\mathbb{O}_{16}$, it is more convenient to separate into cases. If
$p\equiv 1 \pmod{8}$, then
\begin{align}
  h(\mathbb{O}_8)&=\frac{9}{2}\zeta_F(-1)h(F)+\frac{1}{4}h(K_1),
  \label{eq:38}\\ 
  h(\mathbb{O}_{16})&=3\zeta_F(-1)h(F)+\frac{1}{2}h(K_1)+h(K_3).
  \label{eq:39} 
\end{align}
 If $p\equiv
5 \pmod{8}$, then
\begin{align}
  h(\mathbb{O}_8)&=\frac{15}{2\varpi}\zeta_F(-1)h(F)+\frac{3}{4\varpi}
  h(K_1)+\frac{2\delta_{3,\varpi}}{\varpi} h(K_3), \label{eq:57} \\
  h(\mathbb{O}_{16})&=\frac{15}{\varpi}\zeta_F(-1)h(F)+ \frac{3}{2\varpi}
  h(K_1)+\frac{1}{\varpi}h(K_3),\label{eq:58}
\end{align}
where $\delta_{3,\varpi}$ is the Kronecker $\delta$-symbol.
\end{sect}

\begin{sect}\label{subsec:cal-local-opt-emb} \textbf{Numbers of
    conjugacy classes of 2-adic optimal embeddings.}  Assume that
  $p\equiv 1\pmod{4}$, and $B$ is an order in the list $\{B_{1,2}, B_{1,4}, B_{3,4}, B_{3,2}\}$. According to Lemma~\ref{lem:bass-order}, $B$ is a
  Bass order.  Recall that
  \[(\mathbb{O}_8)_2=\End_{A_2}(A_2\oplus O_{F_2}),\qquad
  (\mathbb{O}_{16})_2=\End_{A_2}(A_2^2) \] by the proof of
  Theorem~\ref{explicit_description}. It follows from
  Lemma~\ref{lem:local-optim-embedd-bass-order} that $m_{2,r}(B)\in
  \{0,1\}$ for $r=8,16$, and
\begin{align*}
 m_{2,8}(B)=1 &\iff B_2\simeq A_2\oplus O_{F_2}, \\
 m_{2,16}(B)=1 &\iff B_2\simeq A_2\oplus A_2.
\end{align*}

Since $A_2$ is a Bass order, $B_2$ is isomorphic to one of the
lattices given in (\ref{eq:63}). However, $B_2\not\simeq O_{F_2}\oplus
O_{F_2}$ as $B_2$ is a proper $A_2$-order.  Note that $O_FB=O_K$, where
the product is taken inside the fraction field $K$ of $B$. Hence
$B_2\otimes_{A_2}(A_2/2O_{F_2})\cong B_2/2(O_K\otimes_\zz\zz_2)\cong
B/2O_K$. By looking at the tensor product of $B_2$ with
$(A_2/2O_{F_2})$ for each $B$, we get the following isomorphisms of
$A_2$-modules
\[ (B_{1,2})_2\simeq (B_{3,2})_2 \simeq A_2\oplus O_{F_2}, \quad 
 (B_{1,4})_2\simeq (B_{3,4})_2\simeq A_2\oplus A_2. \]
As a result, we have
\[ m_{2,8}(B_{1,2})=1, \ m_{2,16}(B_{1,2})=0, \  m_{2,8}(B_{1,4})=0, \ 
m_{2,16}(B_{1,4})=1, \]
\[ m_{2,8}(B_{3,4})=0, \ m_{2,16}(B_{3,4})=1, \ m_{2,8}(B_{3,2})=1, \ 
m_{2,16}(B_{3,2})=0. \] 
\end{sect}

\begin{sect}\textbf{Special zeta-values.}
Let $\d_F$ be the discriminant of $F=\qq(\sqrt{p})$. 
By Siegel's formula \cite[Table 2, p.~70]{Zagier-1976-zeta},
\begin{equation}
  \label{eq:26}
  \zeta_F(-1)=\frac{1}{60}\sum_{\substack{b^2+4ac=\d_F\\ a,c>0}} a,
\end{equation}
where  $b\in \zz$ and $a,c\in \nn_{>0}$.
\end{sect}

It remains to calculate the class numbers of $\mathbb{O}_1$ when $p=2,3,
5$. This has already been done in \cite{Kirschmer-Voight} by computer. We 
list the results here for the
sake of completeness. 

\begin{sect}\label{subsec:class-numb-p-2}\textbf{Class number of $\mathbb{O}_1$ for $p=2$.} In this
  case $K_1=\qq(\sqrt{2}, \sqrt{-1})=\qq(\zeta_8)$. Besides $O_{K_1}$
  and $O_{K_3}$, we also need to consider the order $\zz[\sqrt{2},
  \sqrt{-1}]$, which is of index $2$ in $O_{K_1}$. The orders 
  with nonzero contributions to $\Ell(\mathbb{O}_1)$ are 

\renewcommand{\arraystretch}{1.3}
  \begin{tabular}{|>{$}c<{$}||*{3}{>{$}c<{$}|}}
\hline
   p=2 & \zz[\zeta_8] & \zz[\sqrt{2}, \sqrt{-1}] & \zz[\sqrt{2},
   \zeta_6]\\
\hline
   h(B) &  1 & 1 & 1  \\
  w(B)  &  4  & 2 & 3 \\
\hline
   \end{tabular}

\bigskip

\noindent Since $\zeta_{\qq(\sqrt{2})}(-1)=1/12$ by (\ref{eq:26}) and
$h(\qq(\sqrt{2}))=1$, 
\begin{equation}
  \label{eq:43}
  \begin{split}
  h(\mathbb{O}_1)&=\frac{1}{2}h(\qq(\sqrt{2}))\zeta_{\qq(\sqrt{2})}(-1)+\frac{1}{2}\left( \left(1-
    \frac{1}{4}\right)+\left(1- \frac{1}{2}\right)+\left(1-
    \frac{1}{3}\right) \right)\\
 & =\frac{1}{24}+\frac{23}{24}=1
  \qquad \text{ when } p =2.    
  \end{split}
 \end{equation}
\end{sect}


\begin{sect}\label{subsec:class-numb-p-3}
\textbf{Class number of $\mathbb{O}_1$ for $p=3$.}  In
  this case, we have $K_1=K_3=\qq(\zeta_{12})$. Besides the orders
  listed in the table of
  Section~\ref{subsec:classno-max-order-general-p}, we also need to
  consider the order $B_{1,3}:=\zz[\sqrt{3}, \zeta_6]$. The table
  becomes

  \begin{tabular}{|>{$}c<{$}||*{5}{>{$}c<{$}|}}
\hline
   p=3 & O_{K_1} & B_{1,2} & B_{1,4} & B_{1,3} & O_{K_2}\\
\hline
   h(B) &  1 & 1 & 1 & 1 & 2  \\
  w(B)  &  12  & 4 & 2 & 3 & 2 \\
\hline
   \end{tabular}

\bigskip

\noindent Hence \[\Ell(\mathbb{O}_1)=\frac{1}{2}\left( \left(1-
    \frac{1}{12}\right)+\left(1- \frac{1}{4}\right)+\left(1-
    \frac{1}{2}\right)+\left(1-
    \frac{1}{3}\right)+2\left(1-
    \frac{1}{2}\right) \right)=\frac{23}{12}. \] Using (\ref{eq:26})
again, $\zeta_{\qq(\sqrt{3})}(-1)=1/6$. Since
$h(\qq(\sqrt{3}))=1$, 
\begin{equation}
  \label{eq:44}
h(\mathbb{O}_1)=\frac{1}{2}h(\qq(\sqrt{3}))
\zeta_{\qq(\sqrt{3})}(-1)+\Ell(\mathbb{O}_1)
=\frac{1}{12}+\frac{23}{12}=2 
  \qquad \text{ when } p =3.   
\end{equation}
\end{sect}


\begin{sect}\label{subsec:class-numb-p-5}\textbf{Class number of
  $\mathbb{O}_1$ for $p=5$.}  
  In this case we also need to consider the field
  $\qq(\zeta_{10})$. The maximal order
  $\zz[\zeta_{10}]\subset \qq(\zeta_{10})$ is the only order whose 
  unit group is strictly
  larger than $O_F^\times$. The orders needed for the calculation of
  $\Ell(\mathbb{O}_1)$ are

  \begin{tabular}{|>{$}c<{$}||*{3}{>{$}c<{$}|}}
\hline
   p=5 & O_{K_1} & O_{K_3} & \zz[\zeta_{10}]\\
\hline
   h(B) &  1 & 1 & 1 \\
  w(B)  &  2  & 3 & 5 \\
\hline
   \end{tabular}

\bigskip

\noindent Since $\zeta_{\qq(\sqrt{5})}(-1)=1/30$ by (\ref{eq:26}) and
$h(\qq(\sqrt{5}))=1$,
\begin{equation}
  \label{eq:46}
  \begin{split}
      h(\mathbb{O}_1)&=\frac{1}{2}h(\qq(\sqrt{5}))
  \zeta_{\qq(\sqrt{5})}(-1)+\frac{1}{2}\left( \left(1-
    \frac{1}{2}\right)+\left(1- \frac{1}{3}\right)+\left(1-
    \frac{1}{5}\right)\right)\\
    &=\frac{1}{60}+\frac{59}{60}=1   \qquad \text{ when } p =5.   
  \end{split}
\end{equation}
\end{sect}

\begin{proof}[Proof of Theorem~\ref{1.2}]
  By definition, $H(p)=\abs{\Isog(X_\pi)}$, so it follows from
  Theorem~\ref{explicit_description} that 
\[
H(p)=\begin{cases}
  h(\oo_1)+h(\oo_8)+h(\oo_{16})&\qquad \text{ if } p\equiv 1 \pmod{4};\\
  h(\oo_1) &\qquad \text{ if } p\equiv 3 \pmod{4}\quad \text{or}
  \quad p=2.
\end{cases}
\]
The explicit formulae for $h(\oo_1)$ when $p=2$ and $p\equiv 3\pmod{4}$
have already been given above.

Suppose that $p=5$. We have $h(\oo_1)=1$ by
Section~\ref{subsec:class-numb-p-5}. The fundamental unit
$\epsilon=(1+\sqrt{5})/2\not\in \zz[\sqrt{5}]$, so $\varpi=3$. By
(\ref{eq:57}) and (\ref{eq:58}) respectively,
$h(\oo_8)=h(\oo_{16})=1$. Hence $H(p)=3$ if $p=5$.

Suppose that $p\equiv 1\pmod{8}$. Combining (\ref{eq:37}),
(\ref{eq:38}) and (\ref{eq:39}), we get
\[H(p)= h(\oo_1)+h(\oo_8)+h(\oo_{16})=8\zeta_F(-1)h(F)+
  h(K_1)+\frac{4}{3}h(K_3).  \]
Suppose that $p\equiv 5\pmod{8}$ and $p>5$. Note that
$2\delta_{3,\varpi}/\varpi+1/\varpi=1$ for $\varpi=1,3$. We obtain
\begin{equation*}
  \label{eq:59}
  \begin{split}
   H(p)&=\left(\frac{1}{2}+\frac{15}{2\varpi}+\frac{15}{\varpi}\right)\zeta_F(-1)h(F)+
  \left(\frac{1}{4}+\frac{3}{4\varpi}+\frac{3}{2\varpi}\right)h(K_1)+\frac{4}{3}h(K_3) \\ 
  &=\left(\frac{45+\varpi}{2\varpi}\right ) \zeta_F(-1)h(F)+
  \frac{9+\varpi}{4\varpi}h(K_1)+\frac{4}{3}h(K_3)
  \end{split}  
\end{equation*}
  by combining (\ref{eq:37}), (\ref{eq:57}) and (\ref{eq:58}). 
\end{proof}




\subsection{Asymptotic behavior} We keep the notations and
assumptions of Section~\ref{sec:12}.  In particular, $\{I_1,
\ldots, I_h\}$ is a complete set of representatives of the right ideal
classes $\Cl(\OO)$ of an order $\OO\subset D$ with center $Z(\OO)=A$.
The automorphism group $\Aut_\OO(I_i)$ of each $I_i$ as a right
$\OO$-module is $\OO_i^\times$, where $\OO_i=\OO_l(I_i)$. For an order
$\OO$ with a large number of ideal classes, it is generally expected
that $w_i=[\OO_i^\times:A^\times]=1$ for most  $1\leq i \leq
h$. Equivalently, we expect 
$\Mass(\OO)=\sum_{i=1}^h 1/w_i$ to be 
the dominant term in the class number formula 
$h(\OO)=\Mass(\OO)+\Ell(\OO)$. This is indeed the case
for the orders $\oo_r\subset D_{\infty_1, \infty_2}$ with $r=1,8,16$.
\begin{thm} Assume that that $p\equiv 1 \pmod{4}$ 
if $r=8,16$. For all $r\in
  \{1,8,16\}$, we have $ \lim_{p \to \infty} \Mass(\oo_r)/h(\oo_r)=1. $
\end{thm}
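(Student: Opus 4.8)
The plan is to use the class number formula $h(\oo_r)=\Mass(\oo_r)+\Ell(\oo_r)$ from Corollary~\ref{class_number_formula} to reduce the claim to showing that the elliptic part is asymptotically negligible against the mass. Since
\[
\frac{\Mass(\oo_r)}{h(\oo_r)}=\left(1+\frac{\Ell(\oo_r)}{\Mass(\oo_r)}\right)^{-1},
\]
it suffices to prove that $\Ell(\oo_r)/\Mass(\oo_r)\to 0$ as $p\to\infty$ for each $r\in\{1,8,16\}$.

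First I would record, from the explicit computations in Section~\ref{sec:explicit_formula}, that for all large $p$ (in the relevant congruence classes) the mass has the uniform shape $\Mass(\oo_r)=c_r\,\zeta_F(-1)h(F)$ with $c_r$ bounded below by a positive constant (indeed $c_r\ge 1/2$, since $\varpi\le 3$), while the elliptic part is a finite sum $\Ell(\oo_r)=\sum_{j\in\{1,2,3\}}a_j\,h(K_j)$ whose coefficients $a_j$ are bounded above independently of $p$ (they involve only $\varpi\in\{1,3\}$ and the symbol $\left(\frac{2}{p}\right)$). Next I would invoke Herglotz's factorization recalled in Remark~\ref{1.4}, namely $h(K_j)=\nu_j\,h(F)\,h(\Bbbk_j)$ with $\nu_j\in\{1,1/2\}$ and $\Bbbk_j=\qq(\sqrt{-pj})$. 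The decisive feature is that $h(F)$ then cancels:
\[
\frac{\Ell(\oo_r)}{\Mass(\oo_r)}=\frac{\sum_j a_j\nu_j\,h(\Bbbk_j)}{c_r\,\zeta_F(-1)}\ll \frac{\max_{j}h(\Bbbk_j)}{\zeta_F(-1)}.
\]

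The main obstacle --- and the only genuinely analytic step --- is to compare the two surviving quantities. For the numerator, $h(\Bbbk_j)$ is the class number of an imaginary quadratic field of discriminant of size $\asymp p$, so the Dirichlet class number formula together with the trivial bound $L(1,\chi)\ll\log p$ gives $h(\Bbbk_j)\ll\sqrt{p}\,\log p$, uniformly in $j$. For the denominator I would write $\zeta_F(s)=\zeta(s)L(s,\chi)$, where $\chi$ is the even primitive quadratic character of conductor $\d_F\asymp p$ attached to $F$, and apply the functional equation relating $L(-1,\chi)$ to $L(2,\chi)$; since $L(2,\chi)$ stays in a fixed compact subset of $(0,\infty)$, this yields $\zeta_F(-1)\asymp \d_F^{3/2}\asymp p^{3/2}$. (Alternatively one can obtain the lower bound $\zeta_F(-1)\gg p^{3/2}$ directly from Siegel's formula~(\ref{eq:26}) by retaining only the terms with $c=1$, for which $\sum a\asymp \d_F^{3/2}$.) Combining the two estimates gives $\Ell(\oo_r)/\Mass(\oo_r)\ll (\log p)/p\to 0$, which completes the argument.
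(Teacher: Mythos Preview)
Your proof is correct and follows essentially the same route as the paper: reduce to $\Ell(\oo_r)/\Mass(\oo_r)\to 0$, write $\Mass(\oo_r)=c_r\,\zeta_F(-1)h(F)$ and $\Ell(\oo_r)$ as a bounded combination of the $h(K_j)$, apply Herglotz to cancel $h(F)$, and finish by comparing $h(\Bbbk_j)$ against $\zeta_F(-1)$. The only difference lies in the final analytic estimates: the paper invokes the Brauer--Siegel statement $\log h(\Bbbk_j)\sim\log\sqrt{|\d_{\Bbbk_j}|}$ together with the crude linear bound $\zeta_F(-1)>(p-1)/240$ read off from Siegel's formula~(\ref{eq:26}), whereas you use the elementary upper bound $h(\Bbbk_j)\ll\sqrt{p}\log p$ from Dirichlet's formula and the sharper $\zeta_F(-1)\asymp p^{3/2}$ from the functional equation (or from the $c=1$ terms of~(\ref{eq:26})). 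Your choice is arguably cleaner --- it avoids appealing to Brauer--Siegel and yields the explicit rate $\Ell/\Mass\ll(\log p)/p$ --- but the architecture of the two arguments is the same.
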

\begin{proof}
  It is enough to prove that $\lim_{p \to \infty}
  \Ell(\oo_r)/\Mass(\oo_r)=0$ for each $r$.  Recall that
  $\Mass(\oo_r)=c_r\zeta_F(-1)h(F)$, and $\Ell(\oo_r)=\sum_{j=1}^3
  d_{r,j}\,h(K_j)$ for some constants $c_r> 0$ and $d_{r,j}$ in each
  case. It reduces to prove that $\lim_{p \to \infty}
  h(K_j)/(\zeta_F(-1)h(F))=0$ for each $j\in \{1,2,3\}$. Let
  $\Bbbk_j=\qq(\sqrt{-pj})$, and $\d_{\Bbbk_j}$ be its discriminant.
  It follows from the work of Herglotz \cite{MR1544516}
  that $h(K_j)\leq h(F)h(\Bbbk_j)$ for $p\geq 5$ (see also
  Remark~\ref{1.4}).  We have $\lim_{p\to \infty} (\log
  h(\Bbbk_j))/(\log \sqrt{\abs{\d_{\Bbbk_j}}})=1$ by \cite[Theorem
  15.4, Chapter 12]{MR665428}. (See also \cite[Lemma
  4]{Horie-Horie-1990} for a similar result on the asymptotic behavior
  of relative class numbers of arbitrary CM-fields.)  On the other
  hand, $\zeta_F(-1)>(p-1)/240$ by (\ref{eq:26}). Hence \[0\leq
  \lim_{p\to \infty} \frac{h(K_j)}{h(F)\zeta_F(-1)}\leq \lim_{p\to
    \infty} \frac{h(\Bbbk_j)}{\zeta_F(-1)}=0,\] which shows that
  $\lim_{p \to \infty} h(K_j)/(h(F)\zeta_F(-1))=0$ for all $j\in
  \{1,2,3\}$.
\end{proof}


\section{Tables}
In this section, we list the class numbers $h(\oo_r)$ and related data
for $r=1, 8, 16$ (separated into 3 tables) and all primes $5<
p<200$. Here $F=\qq(\sqrt{p})$, and $K_j=\qq(\sqrt{p}, \sqrt{-j})$ for
$j=1,2,3$. Recall that $\oo_8$ and $\oo_{16}$ are defined only for
the primes $p\equiv 1\pmod{4}$. Moreover, for these $p$ the values of
$h(K_2)$ are not needed in the calculation and are left blank.  By
\cite[footnote to table 3, p.~424]{MR0195803}, out of the 303 primes $
p <2000$, $h(\qq(\sqrt{p}))=1$ for 264 of them.  So it is not
surprising that most $h(F)=1$ in Table~\ref{tab:table-O1}.

\begin{center}
\begin{longtable}{*{9}{|>{$}c<{$}}|}
  \caption{Class numbers of $\mathbb{O}_1$ for
    all primes $7\leq p<200$.}\label{tab:table-O1}\\
 \hline
 p & h(\mathbb{O}_1) & \Mass(\mathbb{O}_1) &\Ell(\mathbb{O}_1) & \zeta_F(-1)  & h(F) & h(K_1) & h(K_2) &
 h(K_3)\\ \hline
 \endhead
 \hline
 \multicolumn{9}{|c|}{{Continued on next page}} \\ \hline
\endfoot
\hline 
\endlastfoot
  7 &    3 &   1/3 &   8/3 &  2/3 &  1 &   1 &   4 &   2 \\
 11 &    4 &  7/12 & 41/12 &  7/6 &  1 &   1 &   2 &   2 \\
 13 &    1 &  1/12 & 11/12 &  1/6 &  1 &   1 &  &   2 \\
 17 &    1 &   1/6 &   5/6 &  1/3 &  1 &   2 &  &   1 \\
 19 &    6 & 19/12 & 53/12 & 19/6 &  1 &   1 &   6 &   2 \\
 23 &    7 &   5/3 &  16/3 & 10/3 &  1 &   3 &   4 &   4 \\
 29 &    2 &   1/4 &   7/4 &  1/2 &  1 &   3 &  &   3 \\
 31 &    9 &  10/3 &  17/3 & 20/3 &  1 &   3 &   8 &   2 \\
 37 &    2 &  5/12 & 19/12 &  5/6 &  1 &   1 &  &   4 \\
 41 &    2 &   2/3 &   4/3 &  4/3 &  1 &   4 &  &   1 \\
 43 &   12 &  21/4 &  27/4 & 21/2 &  1 &   1 &  10 &   6 \\
 47 &   13 &  14/3 &  25/3 & 28/3 &  1 &   5 &   8 &   4 \\
 53 &    3 &  7/12 & 29/12 &  7/6 &  1 &   3 &  &   5 \\
 59 &   16 & 85/12 & 107/12 & 85/6 &  1 &   3 &   6 &   2 \\
 61 &    3 & 11/12 & 25/12 & 11/6 &  1 &   3 &  &   4 \\
 67 &   18 &  41/4 &  31/4 & 41/2 &  1 &   1 &  14 &   6 \\
 71 &   19 &  29/3 &  28/3 & 58/3 &  1 &   7 &   4 &   4 \\
 73 &    3 &  11/6 &   7/6 & 11/3 &  1 &   2 &  &   2 \\
 79 &   69 &    42 &    27 &   28 &  3 &  15 &  24 &  18 \\
 83 &   22 &  43/4 &  45/4 & 43/2 &  1 &   3 &  10 &   6 \\
 89 &    4 &  13/6 &  11/6 & 13/3 &  1 &   6 &  &   1 \\
 97 &    4 &  17/6 &   7/6 & 17/3 &  1 &   2 &  &   2 \\
101 &    5 & 19/12 & 41/12 & 19/6 &  1 &   7 &  &   5 \\
103 &   31 &    19 &    12 &   38 &  1 &   5 &  20 &   6 \\
107 &   28 & 197/12 & 139/12 & 197/6 &  1 &   3 &   6 &  10 \\
109 &    5 &   9/4 &  11/4 &  9/2 &  1 &   3 &  &   6 \\
113 &    5 &     3 &     2 &    6 &  1 &   4 &  &   3 \\
127 &   39 &  80/3 &  37/3 & 160/3 &  1 &   5 &  16 &  10 \\
131 &   38 &  93/4 &  59/4 & 93/2 &  1 &   5 &   6 &   6 \\
137 &    6 &     4 &     2 &    8 &  1 &   4 &  &   3 \\
139 &   44 & 127/4 &  49/4 & 127/2 &  1 &   3 &  14 &   6 \\
149 &    7 & 35/12 & 49/12 & 35/6 &  1 &   7 &  &   7 \\
151 &   49 &    37 &    12 &   74 &  1 &   7 &  12 &   6 \\
157 &    7 & 43/12 & 41/12 & 43/6 &  1 &   3 &  &   8 \\
163 &   50 & 467/12 & 133/12 & 467/6 &  1 &   1 &  22 &  10 \\
167 &   47 &  91/3 &  50/3 & 182/3 &  1 &  11 &  12 &   8 \\
173 &    8 &  13/4 &  19/4 & 13/2 &  1 &   7 &  &   9 \\
179 &   54 & 157/4 &  59/4 & 157/2 &  1 &   5 &   6 &   6 \\
181 &    8 &  19/4 &  13/4 & 19/2 &  1 &   5 &  &   6 \\
191 &   61 & 130/3 &  53/3 & 260/3 &  1 &  13 &   8 &   8 \\
193 &   10 &  49/6 &  11/6 & 49/3 &  1 &   2 &  &   4 \\
197 &    9 & 49/12 & 59/12 & 49/6 &  1 &   5 &  &  11 \\
199 &   71 &    55 &    16 &  110 &  1 &   9 &  20 &   6 \\
    \hline
 \end{longtable}
\end{center}


\begin{table}[h]\caption{Class numbers of $\mathbb{O}_8$ for all
    primes $5< p<200$ and $p\equiv 1 \pmod{4}$.}\label{tab:table-O8}
  \begin{tabular}{rl}  
\hline
\multicolumn{1}{ |@{}r@{}|| }{ \begin{tabular}{>{$}c<{$}|>{$}c<{$}|>{$}c<{$}|>{$}c<{$}}
p & h(\mathbb{O}_8) & \Mass(\mathbb{O}_8)&\Ell(\mathbb{O}_8)\\
\hline
  13 &    2 &   5/12 & 19/12 \\
  17 &    2 &    3/2 &   1/2 \\
  29 &    4 &    5/4 &  11/4 \\
  37 &    7 &   25/4 &   3/4 \\
  41 &    7 &      6 &     1 \\
  53 &    7 &  35/12 & 49/12 \\
  61 &    8 &  55/12 & 41/12 \\
  73 &   17 &   33/2 &   1/2 \\
  89 &   21 &   39/2 &   3/2 \\
  97 &   26 &   51/2 &   1/2 \\
  \end{tabular}}&
\multicolumn{1}{ @{}l@{}| }
{  \begin{tabular}{>{$}c<{$}|>{$}c<{$}|>{$}c<{$}|>{$}c<{$}}
p & h(\mathbb{O}_8) & \Mass(\mathbb{O}_8)& \Ell(\mathbb{O}_8)\\
\hline
 101 &   29 &   95/4 &  21/4 \\
 109 &   16 &   45/4 &  19/4 \\
 113 &   28 &     27 &     1 \\
 137 &   37 &     36 &     1 \\
 149 &   21 & 175/12 & 77/12 \\
 157 &   24 & 215/12 & 73/12 \\
 173 &   24 &   65/4 &  31/4 \\
 181 &   29 &   95/4 &  21/4 \\
 193 &   74 &  147/2 &   1/2 \\
 197 &   65 &  245/4 &  15/4 \\
  \end{tabular}}\\
\hline
  \end{tabular}
\end{table}
\begin{table}[h]\caption{Class numbers of $\mathbb{O}_{16}$ for all
    primes $5< p<200$ and $p\equiv 1 \pmod{4}$.}\label{tab:table-O16}
  \begin{tabular}{rl}
\hline
\multicolumn{1}{ |@{}r@{}|| }{
  \begin{tabular}{>{$}c<{$}|>{$}c<{$}|>{$}c<{$}|>{$}c<{$}} 
  p & h(\mathbb{O}_{16}) & \Mass(\mathbb{O}_{16})&
  \Ell(\mathbb{O}_{16})\\ 
\hline
  13 &    2 &    5/6 &   7/6 \\
  17 &    3 &      1 &     2 \\
  29 &    5 &    5/2 &   5/2 \\
  37 &   18 &   25/2 &  11/2 \\
  41 &    7 &      4 &     3 \\
  53 &    9 &   35/6 &  19/6 \\
  61 &   12 &   55/6 &  17/6 \\
  73 &   14 &     11 &     3 \\
  89 &   17 &     13 &     4 \\
  97 &   20 &     17 &     3 \\
\end{tabular}}&
\multicolumn{1}{ @{}l@{}| }
{ \begin{tabular}{>{$}c<{$}|>{$}c<{$}|>{$}c<{$}|>{$}c<{$}}
p & h(\mathbb{O}_{16}) & \Mass(\mathbb{O}_{16})& \Ell(\mathbb{O}_{16})\\
\hline
 101 &   63 &   95/2 &  31/2 \\
 109 &   26 &   45/2 &   7/2 \\
 113 &   23 &     18 &     5 \\
 137 &   29 &     24 &     5 \\
 149 &   35 &  175/6 &  35/6 \\
 157 &   40 &  215/6 &  25/6 \\
 173 &   39 &   65/2 &  13/2 \\
 181 &   52 &   95/2 &   9/2 \\
 193 &   54 &     49 &     5 \\
 197 &  141 &  245/2 &  37/2 \\
\end{tabular}}\\
\hline
\end{tabular}
\end{table}

\section*{Acknowledgements}
The project grew from J.~Xue and CF Yu's participation 
in the Shimura curves
seminar organized by Yifan Yang at the the National Center for
Theoretical Science (NCTS). They also wish to thank NCTS for providing
Magma software that they use to compute the class numbers.  Discussions
with Markus Kirschmer, Meinhard Peters, Paul Ponomarev, 
John Voight, and Yifan Yang are very helpful
and greatly appreciated. J.~Xue is partially supported
by the grant NSC 102-2811-M-001-090. TC Yang and CF Yu 
are partially supported by the grants NSC
100-2628-M-001-006-MY4 and AS-98-CDA-M01.

\bibliographystyle{plain}
\bibliography{TeXBiB}
\end{document}